\documentclass[11pt]{article}

      \oddsidemargin20pt
      \evensidemargin20pt
      \textwidth430pt
      \topmargin-20pt
      \textheight650pt

\usepackage{amsmath,amssymb,amsthm}
\usepackage[utf8]{inputenc}

\usepackage{tikz}

\newcommand\NN{\mathbb{N}}

\newcommand\RR{\mathbb{R}}

\language0

\theoremstyle{plain}
\newtheorem{theorem}{Theorem}[section]
\newtheorem{proposition}{Proposition}[section]

\newtheorem{lemma}{Lemma}[section]
\newtheorem{corollary}{Corollary}[section]

\theoremstyle{definition}
\newtheorem{definition}{Definition}[section]
\newtheorem{remark}{Remark}[section]
\newtheorem{example}{Example}[section]
\newtheorem{question}{Question}[section]
\newtheorem{claim}{Claim}



\newcommand{\CC}{\mathbb{C}} 
\newcommand{\e}{\varepsilon}
\def\la{\lambda}



\newcommand{\card}{\operatorname{card}}


\def\e{\varepsilon}

\title{Ces\`{a}ro bounded  operators in Banach spaces}
\author{T. Berm\'{u}dez,
A. Bonilla,
V. M\"{u}ller\;
and A. Peris
\thanks{The first, second and four author were supported in part by MEC and FEDER,
Project MTM2016-75963-P. The third author was supported by grant No. 17-27844S of GA CR and RVO: 67985840.}}

\begin{document}

\maketitle
\begin{abstract}
We study several notions of boundedness for operators. It is known that any
 power bounded operator is absolutely Cesàro bounded and strong Kreiss bounded (in particular, uniformly Kreiss bounded). The converses do not hold in general.
In this note, we give examples of topologically mixing absolutely Ces\`{a}ro bounded operators on $\ell^p(\mathbb{N})$, $1\le p < \infty$, which are not
power bounded, and provide examples of uniformly Kreiss bounded operators which are not absolutely Ces\`{a}ro bounded. These results complement very limited number of known examples  (see \cite{Shi} and \cite{AS}).
In \cite{AS} Aleman and Suciu   ask if every uniformly Kreiss bounded operator $T$ on a Banach spaces satisfies that $\lim_n\| \frac{T^n}{n}\|=0$. We solve this question for Hilbert space operators and, moreover,
we prove that, if $T$ is  absolutely Ces\`{a}ro bounded on a Banach (Hilbert) space, then $\| T^n\|=o(n)$  ($\| T^n\|=o(n^{\frac{1}{2}})$, respectively).
As a consequence, every absolutely Ces\`{a}ro bounded operator on a reflexive Banach space is mean ergodic, and  there exist mixing  mean ergodic operators on $\ell^p(\mathbb{N})$, $1< p <\infty$.
Finally, we give new examples of  weakly ergodic 3-isometries and study numerically hypercyclic $m$-isometries on finite or infinite dimensional Hilbert spaces. In particular,  all weakly ergodic  strict  3-isometries  on a Hilbert space  are  weakly numerically hypercyclic. Adjoints of unilateral forward weighted shifts which are strict $m$-isometries on  $\ell ^2(\NN)$ are shown to be hypercyclic.
\end{abstract}

\section{Introduction}

Throughout this article $X$ stands for a Banach space, the symbol $B(X)$  denotes the space of bounded linear  operators defined on $X$, and $X^*$ is the space of continuous linear functionals on $X$.

Given $T\in B(X)$, we  denote the Cesàro mean by
$$
M_n(T)x:=\frac{1}{n+1}\sum_{k=0}^n T^kx
$$
for all $x\in X$.

We need to recall  some definitions concerning the behaviour of the sequence of  Cesàro means $(M_n(T))_{n\in \NN}$.

\begin{definition}
A linear operator $T$ on a Banach space $X$ is called

\begin{enumerate}
\item \emph{Uniformly ergodic}  if $M_n(T)$  converges   uniformly.
\item \emph{Mean ergodic} if $M_n(T)$ converges in the strong topology of $X$.
\item \emph{Weakly ergodic} if $M_n(T)$ converges  in the weak topology of  $X$.
\item \emph{Absolutely Ces\`aro bounded} if there exists a constant $C > 0$ such that
$$
\sup_{N \in \mathbb{N}} \frac{1}{N} \sum_{j=1}^N \|T^j x\| \leq C \|x\|\;,
$$
for all $x\in X$.
\item \emph{Ces\`{a}ro bounded} if the sequence $(M_n(T))_{n\in \NN}$ is bounded.
\end{enumerate}
\end{definition}

An operator $T$ is said \emph{power bounded} if    there is a $C>0$ such that $\|T^n\| <C$  for all  $ n$.

\ \par


The class  of absolutely Cesàro bounded operators was introduced by Hou and Luo in \cite{HL}.

\begin{definition}
An operator $T$ is said
\begin{enumerate}
\item \emph{Uniformly  Kreiss bounded}  if  there is a $C>0$ such that
$$
\left\|\sum_{k=0} ^{n} \lambda^{-k-1} T^k\right\| \le \frac{ C}{|\lambda|-1} \;\; \mbox { for all } |\lambda |>1 \mbox{ and } n=0,1,2, \cdots
$$
\item \emph{Strongly  Kreiss bounded}  if there is a $C>0$ such that
$$
\|(\lambda I-T)^{-k}\| \le \frac{ C}{(|\lambda|-1)^k} \;\; \mbox { for all } |\lambda |>1  \mbox{ and } k=1, 2, \cdots
$$
\item  Kreiss bounded  if there is a $C>0$ such that
$$
\|(\lambda I-T)^{-1}\| \le \frac{ C}{|\lambda|-1} \;\; \mbox { for all } |\lambda |>1.
$$
\end{enumerate}
\end{definition}

\begin{remark}
{\rm
\begin{enumerate}
\item In \cite{MSZ}, it  is proved that an operator $T$ is  uniformly Kreiss bounded  if and only if there is a $C$ such that
$$
\|M_{n}(\lambda T)\| \le C \;\; \mbox{ for } |\lambda |=1  \mbox{ and } n=0,1,2, \cdots.
$$


\item We recall that  $T$  is strongly Kreiss bounded if and only if
$$
\|e^{zT}\| \le M e^{|z|},  \mbox{ for all } z \in \mathbb{C}.
$$

\item In \cite{GZ08}, it is shown that every strong  Kreiss bounded operator is   uniformly Kreiss bounded. MacCarthy (see  \cite {Shi}) proved
that if  $T$ is  strong  Kreiss bounded
then $\|T^n\|\le Cn^{\frac{1}{2}}$.
\item There exist Kreiss bounded operators which are not Ces\`{a}ro bounded, and conversely \cite{SZ}.
\item On finite-dimensional Hilbert spaces, the classes of uniformly Kreiss bounded, strong Kreiss bounded,  Kreiss bounded and power bounded operators are equal.
\item Any absolutely Ces\`{a}ro bounded operator is uniformly Kreiss bounded.

\end{enumerate}
}
\end{remark}

Let $X$  be the space of all bounded analytic functions $f$   on the unit disk of the complex plane such that their derivatives $f'$ belong to the Hardy space $H^1$, endowed with the norm
$$
\|f\| = \|f\|_{\infty} + \|f\|_{H^1}\;.
$$
Then  the multiplication operator, $M_z$,  acting on $X$ is Kreiss bounded but it fails to be power bounded.
Moreover, this operator is not uniformly  Kreiss bounded (see \cite{SW}).

Furthermore, for the Volterra operator $V$ acting on $L^p[0,1]$, $1\le p\le \infty$, we have that $I-V$
is uniformly Kreiss bounded, for $p=2$ it is power bounded (see  \cite{MSZ}),  and it is asked if every uniformly
Kreiss bounded operator on a Hilbert space is  power bounded. This is related to the following question in \cite[page 279]{AS} (see also, \cite{Su}):

\begin{question}\label{pregunta1}
If $T$ is a uniformly Kreiss bounded operator on a Banach space, does it follow that $\lim_n\| \frac{T^n}{n}\|=0$?
\end{question}

Graphically, we show the implications between the above definitions.

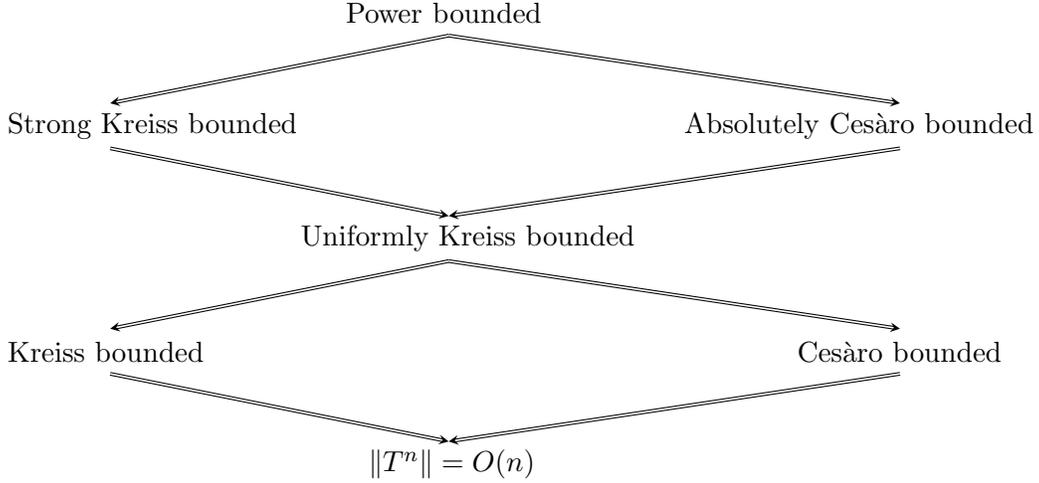
\begin{figure}[h]
\centering
\begin{tikzpicture}[scale=0.3,>=stealth]
  \node[right] at (25,15) {Power bounded};
  \node[right] at (10,10) {Strong Kreiss bounded};
  \node[right] at (40,10) {Absolutely Ces\`{a}ro bounded};
  \node[right] at (23,5) {Uniformly Kreiss bounded};
  \node[right] at (10,0) {Kreiss bounded};
  \node[right] at (45,0) {Ces\`{a}ro bounded};
  \node[right] at (26,-5) {\mbox {$\|T^n\|= O(n)$}};
  \draw[double, ->] (30,14) -- (15,11);
  \draw[double, ->] (30,14) -- (50,11);
  \draw[double, ->] (15,9) -- (30,6);
  \draw[double, ->] (50,9) -- (30,6);
   \draw[double, ->] (30,4) -- (15,1);
    \draw[double, ->] (30,4) -- (50,1);
    \draw[double, ->] (15,-1) -- (30,-4);
  \draw[double, ->] (50,-1) -- (30,-4);
 \end{tikzpicture}
\caption{Implications among different definitions related with
Kreiss bounded  and  Cesàro bounded operators in Banach spaces. }
\end{figure}

We recall some definitions that allow us to study some properties  of orbits  related to  the behavior of the sequence $(M_n(T))_{n\in \NN}$.
\begin{definition} Let $T\in B(X)$.
$T$ is \emph{topologically mixing} if for any pair $U,V$ of non-empty
open subsets of $X$, there exists some $n_0 \in \NN$ such that
$T^n(U) \cap V \neq \emptyset$ for all $n \geq n_0$.

\end{definition}


Examples of   absolutely Ces\`{a}ro bounded
mixing operators on $\ell^p(\mathbb{N})$ are given in \cite{MOP13} (see Section 3.7 in \cite{beltran14}), \cite{HL}, and \cite{BBMP} (see \cite{BBPW}).

 \bigskip

Let $H$ be a Hilbert space. For a positive integer $m$, an operator $T\in B(H)$ is called an \emph{$m$-isometry} if for any $x\in H$,
$$
\sum _{k=0}^m (-1)^{m-k}{ m\choose k}\|T^{k}x\|^2 =0\; .
$$

We say that $T$ is a \emph{strict $m$-isometry } if $T$ is an $m$-isometry but it is not an $(m-1)$-isometry.

\ \par

\begin{remark}
{\rm  \begin{enumerate}
\item For $m\ge 2$, the strict $m$-isometries are not power bounded. Moreover,  $\|T^n\| =O(n)$ for  $3$-isometries and
$\|T^n\| =O(n^{\frac{1}{2}})$ for  $2$-isometries.
\item There are no strict $m$-isometries on finite dimensional spaces for $m$ even. See \cite[Proposition 1.23]{AgS}.
\item An example of weak ergodic $3$-isometry is provided in \cite{AS}.
\end{enumerate}
}
\end{remark}







\ \par

The paper is organized as follows: In Section 2, we prove the optimal asymptotic behavior of $\| T^n\|$ for absolutely Cesàro bounded operators and for uniformly Kreiss bounded operators. In particular,  we prove that, for any $0< \varepsilon <\frac{1}{p}$, there exists an absolutely Ces\`{a}ro bounded mixing operator $T$ on $\ell^p(\mathbb{N})$, $1\le p < \infty$, with $\|T^n\|= (n+1)^{\frac{1}{p}-\varepsilon}$. Moreover, we show that
  any absolutely Ces\`{a}ro bounded operator on a Banach space, and any uniformly Kreiss bounded operator on a Hilbert space, satisfies that $\|T^n\|=o(n)$. For  absolutely Ces\`{a}ro bounded operators $T$ on Hilbert spaces we get   $\| T^n\|=o(n^{\frac{1}{2}})$. Section 3 studies ergodic properties of $m$-isometries on finite or infinite dimensional Hilbert spaces. For example,  strict $m$-isometries with $m>3$ are not Cesàro bounded, and we give new examples of weakly ergodic 3-isometries.  In Section 4  we analyze numerical hypercyclicity of $m$-isometries. In particular, we obtain that the adjoint of any strict $m$-isometry unilateral forward weighted shift on  $\ell ^2(\NN)$ is hypercyclic. Moreover, we prove that   weakly ergodic $3$-isometries are weakly numerically hypercyclic.

\section{Absolutely Ces\`{a}ro bounded operators}

It is immediate that any power bounded operator is absolutely Ces\`{a}ro bounded. In general, the converse  is not true.

By $e_n, n\in \NN$, $e_n=(\delta_{n\; k})_{k\in \NN}:=(0, \ldots, 0,1,0,\ldots) $, we denote the standard canonical basis in $\ell^p(\NN)$ for $1\leq p<\infty $.

The following theorem gives a variety of absolutely Ces\`{a}ro bounded operators with different behavior on $\ell^p (\NN)$.

\begin{theorem}\label{ejemplos} Let $T$ be the unilateral weighted backward shift on $\ell ^p(\NN)$ with $1\leq p<\infty$ defined by $Te_1:=0$ and $Te_k:=w_ke_{k-1}$ for $k>1$. If
 $w_k:=\displaystyle \left( \frac{k}{k-1}\right)^{\alpha}  $ with $0<\alpha <\frac{1}{p}$,
then $T$ is absolutely Ces\`{a}ro bounded on $\ell^p(\NN)$.
\end{theorem}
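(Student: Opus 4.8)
The plan is to reduce absolute Ces\`aro boundedness to an elementary scalar inequality by exploiting the telescoping nature of the weights. First I would compute the action of the iterates on basis vectors: since $Te_k=w_ke_{k-1}$, one gets $T^je_k=\big(\prod_{i=0}^{j-1}w_{k-i}\big)e_{k-j}$ for $1\le j\le k-1$ and $T^je_k=0$ for $j\ge k$. With $w_k=(k/(k-1))^\alpha$ the product telescopes to $(k/(k-j))^\alpha$, so that $T^je_k=(k/(k-j))^\alpha e_{k-j}$. Writing $x=\sum_{k\ge1}x_ke_k$ and shifting the index by $m=k-j$ then yields the clean formula
\[
\|T^jx\|_p^p=\sum_{m\ge1}|x_{m+j}|^p\Big(\frac{m+j}{m}\Big)^{\alpha p}.
\]

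Since $p\ge1$, the power-mean (Jensen) inequality gives $\frac1N\sum_{j=1}^N\|T^jx\|_p\le\big(\frac1N\sum_{j=1}^N\|T^jx\|_p^p\big)^{1/p}$, so it suffices to prove $\frac1N\sum_{j=1}^N\|T^jx\|_p^p\le C^p\|x\|_p^p$ for a constant $C=C(\alpha,p)$. Interchanging the order of summation and collecting the coefficient of $|x_k|^p$ (put $k=m+j$), the whole estimate reduces to the single scalar bound
\[
S_k^{(N)}:=\sum_{j=1}^{\min(N,k-1)}\Big(\frac{k}{k-j}\Big)^{\beta}\le CN\qquad\text{for all }k,N,
\]
where $\beta:=\alpha p\in(0,1)$; indeed one then has $\frac1N\sum_{j}\|T^jx\|_p^p=\frac1N\sum_{k}|x_k|^p\,S_k^{(N)}\le C\|x\|_p^p$, and the power-mean step finishes.

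The heart of the matter, and the step I expect to be delicate, is the uniform estimate $S_k^{(N)}\le CN$. Substituting $i=k-j$ turns it into $S_k^{(N)}=k^\beta\sum_{i=\max(k-N,1)}^{k-1}i^{-\beta}$. Here the hypothesis $\beta<1$ (equivalently $\alpha<1/p$) is essential: it forces $\sum_{i=1}^{k-1}i^{-\beta}\le\frac{k^{1-\beta}}{1-\beta}(1+o(1))$ to grow \emph{sublinearly}, after which the prefactor $k^\beta$ produces only a term of order $k$. I would then split into three cases via an integral comparison: (i) if $k\le N+1$, the sum starts at $i=1$ and $S_k^{(N)}\le \frac{c\,k}{1-\beta}\le\frac{c(N+1)}{1-\beta}$; (ii) if $N+1<k\le2(N+1)$, the same comparison gives $S_k^{(N)}\le\frac{c\,k}{1-\beta}\le\frac{2c(N+1)}{1-\beta}$; (iii) if $k>2(N+1)$, then $k-N\ge k/2$, each of the $N$ summands is at most $(k/2)^{-\beta}$, so $S_k^{(N)}\le k^\beta N(k/2)^{-\beta}=2^\beta N$. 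The dangerous regime is $k\approx N$, where a naive bound of the inner sum by $N(k-N)^{-\beta}$ blows up; it is precisely the integral (mean-value) comparison that tames it. Combining the three cases yields $S_k^{(N)}\le CN$, proving the claim. One should also record in passing that $T$ is bounded, with $\|T\|=\sup_{k\ge2}w_k=2^\alpha$.
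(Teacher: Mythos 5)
Your proposal is correct and follows essentially the same route as the paper's own proof: the telescoped formula $T^je_k=(k/(k-j))^{\alpha}e_{k-j}$, the reduction via Jensen's (power-mean) inequality to the $p$-th power means, the interchange of summation, and a split according to whether $k$ is comparable to $N$ (your cases (i)--(ii), the paper's range $j\le 2N$ with the integral comparison $\sum_{i<k} i^{-\beta}\lesssim k^{1-\beta}/(1-\beta)$) or $k>2N$ (your case (iii), where the ratio $(k/(k-j))^{\beta}\le 2$). The only differences are cosmetic: the paper writes $\beta=1-\varepsilon$ and merges your first two cases into one.
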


\begin{proof}
Denote $ \varepsilon := 1-\alpha p$. Then $\varepsilon >0$ and $ \alpha =\frac{1-\varepsilon}{p}$.
Fix $x\in \ell^p(\NN)$ with $||x||=1$  given by $x:=\displaystyle \sum_{j=1}^\infty \alpha _j e_j $ and $N\in \NN$. Then

\begin{eqnarray}
 \sum_{n=1}^N \|T^nx\|_p^p &=& \sum_{n=1}^N \sum_{j=n+1}^\infty |\alpha_j|^p\Big(\frac{j}{j-n}\Big)^{1-\varepsilon}   \nonumber \\
 &=& \sum_{j=2}^\infty |\alpha_j|^p\, j^{1-\varepsilon} \sum_{n=1}^{\min\{N,\;j-1\}}({j-n})^{\varepsilon-1} \nonumber \\
 &=& \sum_{j=2}^{2N} |\alpha_j|^p\, j^{1-\varepsilon} \sum_{n=1}^{\min\{ N, \;j-1\}}({j-n})^{\varepsilon-1}
  + \sum_{j=2N+1}^\infty |\alpha_j|^p \sum_{n=1}^N \Big(\frac{j}{j-n}\Big)^{1- \varepsilon}\nonumber \\
 &\le &\sum_{j=2}^{2N} |\alpha_j|^p\, j^{1-\varepsilon}\, \sum_{n=1}^{j-1}(j-1)^{\varepsilon -1} + \sum_{j=2N+1}^\infty |\alpha_j|^p \sum_{n=1}^N \Big(\frac{j}{j-n}\Big)^{1- \varepsilon} \;. \label{des}
 \end{eqnarray}

Notice that for $j>2N$ and $n\leq N$, we have that
$$
\left( \frac{j}{j-n} \right)^{1-\varepsilon}\leq 2^{1-\varepsilon}<2\;.
$$
Hence
$$
\sum_{j=2N+1}^\infty |\alpha_j|^p \sum_{n=1}^N \Big(\frac{j}{j-n}\Big)^{1- \varepsilon}<2N\sum_{j=2N+1}^\infty  |\alpha _j|^p\leq 2N \;.
$$
We can estimate the first term of (\ref{des}) in the following way:
\begin{eqnarray*}
\sum_{n=1}^{j-1} (j-n)^{\varepsilon -1} &= & \sum_{n=1}^{j-1} n^{\varepsilon -1} <  1+ \int _1^{j-1} t^{\varepsilon -1}dt \\
&\le& \frac{(j-1)^\varepsilon}{\varepsilon} <\frac{j^\varepsilon }{\varepsilon} \;.
\end{eqnarray*}

Thus
\begin{eqnarray*}
\sum_{n=1}^N \| T^nx\|^p_p
&\leq &
\sum_{j=2}^{2N} |\alpha _j|^pj^{1-\varepsilon} \frac{ j^{\varepsilon }}{\varepsilon} + \sum_{j=2N+1} ^\infty |\alpha _j|^p 2 N\\
&=& \sum_{j=2}^{2N} |\alpha _j|^p \frac{j}{\varepsilon} + 2N \sum_{j=2N+1} ^\infty |\alpha _j|^p \\
&\leq & \frac{2N}{\varepsilon} \sum_{j=2}^{2N} |\alpha _j|^p + 2N \sum_{j=2N+1} ^\infty |\alpha _j|^p \\
& \leq & 2N \left( \frac{1}{\varepsilon} +1\right) \;.
\end{eqnarray*}
By Jensen's inequality
$$
\left( \frac{1}{N} \sum_{n=1}^N \|T^nx\|_p\right)^p \leq \frac{1}{N} \sum_{n=1}^N \| T^nx\|_p^p\leq 2 \left( \frac{1}{\varepsilon } +1\right)  \; ,
$$
which yields the result.
%
\end{proof}

As consequence of  above theorem, we obtain

\begin{corollary}
There exist   absolutely Ces\`{a}ro bounded operators which are not power bounded.
\end{corollary}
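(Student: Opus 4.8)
The plan is to use the operators of Theorem~\ref{ejemplos} themselves as the witnesses: they are already known to be absolutely Ces\`aro bounded, so the only thing left to verify is that (at least) one of them fails to be power bounded. Concretely, I would fix $1\le p<\infty$ and any exponent $0<\alpha<\frac{1}{p}$, and let $T$ be the unilateral weighted backward shift on $\ell^p(\NN)$ with $w_k=\bigl(\frac{k}{k-1}\bigr)^{\alpha}$. Theorem~\ref{ejemplos} guarantees absolute Ces\`aro boundedness, so it suffices to estimate $\|T^n\|$ and show it is unbounded.

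The key computation is to iterate the shift and evaluate the operator norm. Applying $Te_k=w_k e_{k-1}$ repeatedly gives, for $k>n$,
$$
T^n e_k=\Bigl(w_k w_{k-1}\cdots w_{k-n+1}\Bigr)e_{k-n},
$$
while $T^n e_k=0$ for $k\le n$. The product of weights telescopes, since $\prod_{j=k-n+1}^{k}\frac{j}{j-1}=\frac{k}{k-n}$, so that $T^n e_k=\bigl(\frac{k}{k-n}\bigr)^{\alpha}e_{k-n}$. Because a weighted backward shift sends distinct basis vectors to scalar multiples of distinct basis vectors, i.e.\ vectors with pairwise disjoint support, its operator norm on $\ell^p$ equals the supremum of the moduli of those scalars. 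Hence $\|T^n\|=\sup_{k>n}\bigl(\frac{k}{k-n}\bigr)^{\alpha}$.

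It then remains to evaluate this supremum. Since $k\mapsto\frac{k}{k-n}=1+\frac{n}{k-n}$ is strictly decreasing for $k>n$, the supremum is attained at the smallest admissible index $k=n+1$, giving $\|T^n\|=(n+1)^{\alpha}$. As $\alpha>0$, we have $(n+1)^{\alpha}\to\infty$, so $T$ is not power bounded, which yields the corollary.

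I do not expect any genuine obstacle in this argument, as the identity is forced by telescoping and the disjoint-support structure of the shift. The only point deserving a word of justification is the claim that the operator norm of the weighted shift is exactly $\sup_{k>n}\bigl(\frac{k}{k-n}\bigr)^{\alpha}$; this is immediate because the images $e_{k-n}$ of the basis vectors are mutually orthogonal in support, so no cancellation or interaction between coordinates can lower the norm below the largest individual weight.
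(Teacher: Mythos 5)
Your proposal is correct and is essentially the paper's own argument: the paper proves this corollary simply by citing Theorem~\ref{ejemplos}, with the norm identity $\|T^n\|=(n+1)^{\alpha}$ left implicit (it is recorded explicitly later, in the proof of Corollary~\ref{mixing}). Your telescoping computation and the disjoint-support argument for the operator norm just supply the details the paper treats as immediate.
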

\begin{proof}
It is an immediate consequence of   Theorem \ref{ejemplos}.
\end{proof}

\begin{corollary}
For $1<p<2$, there exist   absolutely Ces\`{a}ro bounded operators which are not  strongly Kreiss bounded on $\ell^p(\NN)$.
\end{corollary}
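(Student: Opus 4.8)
The plan is to produce the required operator from within the family already constructed in Theorem~\ref{ejemplos}, exploiting MacCarthy's result recalled above, namely that strong Kreiss boundedness forces $\|T^n\|\le Cn^{1/2}$. Hence it suffices to locate, among the weighted backward shifts of Theorem~\ref{ejemplos}, one whose powers grow strictly faster than $n^{1/2}$. The admissible exponents there are $0<\alpha<\frac1p$, and the hypothesis $p<2$ is exactly the statement that $\frac1p>\frac12$; so I would fix any $\alpha$ with $\frac12<\alpha<\frac1p$, for which Theorem~\ref{ejemplos} already guarantees that the corresponding $T$ is absolutely Ces\`aro bounded.

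The heart of the argument is the exact computation of $\|T^n\|$. Since $T$ is a weighted backward shift, $T^n e_k=0$ for $k\le n$, while for $k>n$ the product of weights telescopes:
$$
T^n e_k=\Big(\prod_{i=0}^{n-1} w_{k-i}\Big)e_{k-n}
=\Big(\prod_{i=0}^{n-1}\Big(\tfrac{k-i}{k-i-1}\Big)^{\alpha}\Big)e_{k-n}
=\Big(\tfrac{k}{k-n}\Big)^{\alpha}e_{k-n}.
$$
As the norm of $T^n$ on $\ell^p$ is the supremum over $k>n$ of these products of weights, $\|T^n\|=\sup_{k>n}\big(\tfrac{k}{k-n}\big)^{\alpha}$. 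Because $k\mapsto \tfrac{k}{k-n}$ is decreasing for $k>n$, this supremum is attained in the limit $k\to n+1$, giving $\|T^n\|=(n+1)^{\alpha}$.

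To finish, I would observe that $\alpha>\frac12$ implies $(n+1)^{\alpha}/n^{1/2}\to\infty$, so $\|T^n\|$ is not $O(n^{1/2})$; by MacCarthy's estimate $T$ is therefore not strongly Kreiss bounded, although it is absolutely Ces\`aro bounded by Theorem~\ref{ejemplos}. I do not anticipate a genuine obstacle: the only delicate point is the monotonicity statement used to evaluate the supremum, which is elementary. The conceptual content is simply that the exponent window $(\tfrac12,\tfrac1p)$ is nonempty precisely when $p<2$, which is what makes the restriction on $p$ both natural and, for this particular construction, necessary.
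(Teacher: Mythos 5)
Your proof is correct and follows essentially the same route as the paper: invoke MacCarthy's estimate that strong Kreiss boundedness forces $\|T^n\|\le Cn^{1/2}$, then take the shift of Theorem~\ref{ejemplos} with $\alpha\in(\tfrac12,\tfrac1p)$, whose powers satisfy $\|T^n\|=(n+1)^{\alpha}$ (your telescoping computation matches the norm identity the paper records in Corollary~\ref{mixing}). The only cosmetic slip is saying the supremum is ``attained in the limit $k\to n+1$''; since $k\mapsto k/(k-n)$ is decreasing on the integers $k\ge n+1$, it is attained exactly at $k=n+1$.
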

\begin{proof}
In view of \cite[Remark 3]{Shi}, if  $T$ is a strong  Kreiss bounded operator
then $\|T^n\|\le Cn^{\frac{1}{2}}$. The conclusion follows from part (1) of  Theorem \ref{ejemplos}.
\end{proof}

\begin{corollary}\label{mixing}
Let $1\leq p<\infty$ and $\varepsilon>0$. Then there exists an absolutely Ces\`{a}ro bounded operators $T$ on $\ell^p$ which  is mixing  and $\|T^n\| =(n+1)^{\frac{(1-\varepsilon)}{p}}$ for all $n\in\NN$.
\end{corollary}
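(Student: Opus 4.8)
The plan is to reuse verbatim the operator built in Theorem~\ref{ejemplos}, tuning its exponent so that the prescribed norm growth appears, and then to supplement it with the two facts not yet recorded there: the exact value of $\|T^n\|$ and topological mixing. Concretely, I would set $\alpha:=\frac{1-\varepsilon}{p}$, so that $0<\alpha<\frac1p$ (which tacitly requires $0<\varepsilon<1$), and let $T$ be the weighted backward shift with $w_k=\left(\frac{k}{k-1}\right)^\alpha$. Theorem~\ref{ejemplos} then gives immediately that $T$ is absolutely Ces\`aro bounded, so that part is free.

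The second step is to read off $\|T^n\|$ from the telescoping of the weights. For $k>n$ one has $T^ne_k=\Big(\prod_{j=k-n+1}^{k}w_j\Big)e_{k-n}=\big(\tfrac{k}{k-n}\big)^\alpha e_{k-n}$, whereas $T^ne_k=0$ for $k\le n$. Since $T^n$ sends the canonical basis vectors to scalar multiples of pairwise distinct basis vectors, its operator norm on $\ell^p$ equals the supremum of these scalars, that is $\|T^n\|=\sup_{k>n}\big(\tfrac{k}{k-n}\big)^\alpha$. As $k\mapsto \tfrac{k}{k-n}$ is strictly decreasing for $k>n$, the supremum is attained at $k=n+1$, giving $\|T^n\|=(n+1)^\alpha=(n+1)^{(1-\varepsilon)/p}$, exactly as claimed.

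For mixing I would invoke the mixing (Kitai / Gethner--Shapiro) form of the Hypercyclicity Criterion, taking the dense subspace $X_0:=\spa\{e_k:k\ge1\}$ as both the forward and the backward dense set. On $X_0$ the vector $T^ne_k$ vanishes once $n\ge k$, so $T^n\to0$ pointwise there. Defining $S$ on $X_0$ by $Se_k:=w_{k+1}^{-1}e_{k+1}$ and extending linearly, one checks $TS=I$ on $X_0$, and the same telescoping yields $\|S^ne_k\|=\big(\prod_{j=k+1}^{k+n}w_j\big)^{-1}=\big(\tfrac{k}{k+n}\big)^\alpha\to0$ as $n\to\infty$. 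Because all three convergences hold along the full sequence rather than merely a subsequence, the criterion delivers that $T$ is topologically mixing, completing the proof.

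The calculations here are routine telescopings and an elementary monotonicity argument; the only point that genuinely needs attention is to use the full-sequence version of the criterion, so as to conclude topological mixing and not just hypercyclicity.
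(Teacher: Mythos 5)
Your proof is correct and essentially follows the paper's approach: the same weighted backward shift from Theorem~\ref{ejemplos} with $\alpha=\frac{1-\varepsilon}{p}$ (both you and the paper tacitly need $0<\varepsilon<1$), with absolute Ces\`aro boundedness quoted from that theorem. The only differences are cosmetic: where the paper cites \cite[Theorem 4.8]{GEP11} (a weighted backward shift is mixing iff $\bigl(\prod_{k}w_k\bigr)^{-1}\to 0$, verified here by the telescoping $\prod_k w_k = n^{\alpha}\to\infty$), you inline the standard proof of that characterization via Kitai's full-sequence criterion, and you additionally verify the norm identity $\|T^n\|=(n+1)^{\alpha}$ explicitly, which the paper asserts without computation.
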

\begin{proof}
By part (1)  of Theorem \ref{ejemplos} we have that $T$ is absolutely Cesàro bounded and
\begin{equation}\label{ejl}
\| T^n\|=(n+1)^{\frac{(1-\varepsilon)}{p}} \;.
\end{equation}
Moreover by \cite[Theorem 4.8]{GEP11} we have that $T$ is mixing if $\left( \prod_{k=1}^n w_k \right)^{-1} \to 0$ as $n\to \infty $. Indeed
$$
\left( \prod_{k=1}^n w_k \right)^{-1}=\frac{1}{n^\alpha } \to 0\;,
$$
hence $T$ is mixing.
\end{proof}

Further consequences can be obtained for operators on Hilbert spaces.

\begin{corollary}
There exists a uniformly Kreiss bounded Hilbert space operator that is not absolutely Cesàro bounded.
\end{corollary}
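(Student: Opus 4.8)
The plan is to reduce the statement to the asymptotic result established for Hilbert spaces: since every absolutely Ces\`aro bounded operator on a Hilbert space satisfies $\|T^n\|=o(n^{1/2})$, it suffices to produce a uniformly Kreiss bounded operator $T$ on a Hilbert space with $\limsup_n \|T^n\|/\sqrt n>0$. Such a $T$ fails the $o(n^{1/2})$ test and therefore cannot be absolutely Ces\`aro bounded, while being uniformly Kreiss bounded by construction. So the whole problem is to exhibit a uniformly Kreiss bounded Hilbert space operator whose powers grow as fast as $\sqrt n$ along a subsequence.

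The first task is to see that the obvious candidates are excluded, which already locates where the example must live. If $T$ is a weighted shift with positive weights, then conjugating by the diagonal unitary $e_k\mapsto\lambda^{-k}e_k$ shows $\lambda T$ is unitarily equivalent to $T$ for every $|\lambda|=1$; hence, by the characterization recalled in the Introduction ($T$ is uniformly Kreiss bounded iff $\sup_{n,\,|\lambda|=1}\|M_n(\lambda T)\|<\infty$), such a $T$ is uniformly Kreiss bounded precisely when it is merely Ces\`aro bounded. A direct computation of $\|M_n(T)\|$ shows that the backward shift of Theorem~\ref{ejemplos} at the critical exponent $\alpha=\tfrac1p=\tfrac12$, as well as the Dirichlet-type strict $2$-isometry (both of which have $\|T^n\|\asymp\sqrt n$), already have unbounded Ces\`aro means, so neither is uniformly Kreiss bounded. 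Consequently the desired operator must be genuinely non-normal: the boundedness of the phase-averaged means $M_n(\lambda T)$ must come from cancellation across the orbit that is invisible to the plain sum of norms $\tfrac1N\sum_{j\le N}\|T^jx\|$ controlling absolute Ces\`aro boundedness.

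I would therefore realize $T$ as an $\ell^2$-direct sum $T=\bigoplus_r T_r$ of finite-dimensional blocks chosen so that the phase-averaged means stay bounded with a constant independent of the dimension, while the transient growth of each block reaches order $\sqrt{\dim T_r}$. Then the uniform Kreiss constant of $T$ equals $\sup_r$ of those of the blocks and so is finite, whereas $\|T^n\|\ge\sup_r\|T_r^n\|\gtrsim\sqrt n$ along a sequence $n\to\infty$, defeating the $o(n^{1/2})$ test. A convenient way to certify the blocks is through strong Kreiss boundedness, $\|e^{zT_r}\|\le M e^{|z|}$ with $M$ independent of $r$: this passes to the direct sum since $\|e^{zT}\|=\sup_r\|e^{zT_r}\|\le Me^{|z|}$, it implies uniform Kreiss boundedness, and by MacCarthy's estimate $\|T^n\|\le C\sqrt n$ (see the Introduction) the rate $\sqrt n$ is exactly the one to saturate (compare the limited known examples in \cite{Shi}). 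The main obstacle is precisely this construction: one must produce non-normal matrices whose exponentials obey $\|e^{zT_r}\|\le Me^{|z|}$ uniformly in the dimension yet whose powers climb to order $\sqrt{\dim}$, and proving that the cancellation among the $T_r^k$ persists simultaneously for every unimodular phase, with a dimension-free bound, is where essentially all the work lies.
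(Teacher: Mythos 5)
Your reduction (uniformly Kreiss bounded $+$ $\limsup_n n^{-1/2}\|T^n\|>0$ $\Rightarrow$ not absolutely Ces\`aro bounded, via Theorem~\ref{acbhilbert}) is logically sound, but the proposal never produces the object it needs. The entire proof rests on the final paragraph: a direct sum $\bigoplus_r T_r$ of finite-dimensional blocks that are strongly (or uniformly) Kreiss bounded with a \emph{dimension-free} constant and yet have transient growth of order $\sqrt{\dim T_r}$. That construction is not carried out, and you say yourself that it is ``where essentially all the work lies.'' Nothing in the paper supplies it either: the paper exhibits no uniformly Kreiss bounded Hilbert space operator with power growth of order $\sqrt{n}$, and whether the MacCarthy bound $\|T^n\|\le C n^{1/2}$ for strongly Kreiss bounded operators can be saturated on a Hilbert space is precisely the sort of question the paper leaves open (see the two questions closing Section~2). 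So what you have is a reduction of the stated corollary to a strictly harder, unresolved existence problem, not a proof.

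The gap comes from a misdiagnosis: you assume that the only way to violate absolute Ces\`aro boundedness is to defeat the $o(n^{1/2})$ test, but absolute Ces\`aro boundedness is a condition on the averages $\frac1N\sum_{j=1}^N\|T^jx\|$ for \emph{individual} vectors, and it fails already under arbitrarily slow polynomial orbit growth. The paper's example is the forward weighted shift $Tu_k=\bigl(\tfrac{k+1}{k}\bigr)^{\alpha}u_{k+1}$ with $0<\alpha<1/2$: it satisfies $\|T^n\|=(n+1)^{\alpha}=o(n^{1/2})$, so it passes your test and would never be found by your strategy, yet $\|T^nu_1\|=(n+1)^{\alpha}$ gives $\frac1N\sum_{n=1}^N\|T^nu_1\|\asymp N^{\alpha}\to\infty$, so it is not absolutely Ces\`aro bounded. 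Uniform Kreiss boundedness then comes for free by duality: the adjoint $T^*$ is the backward shift of Theorem~\ref{ejemplos}, hence absolutely Ces\`aro bounded, hence uniformly Kreiss bounded, and uniform Kreiss boundedness passes to adjoints since $M_n(\lambda T)^*=M_n(\bar\lambda T^*)$ and $\bar\lambda$ runs over the circle with $\lambda$. Ironically, your own (correct) observation that for a weighted shift $\lambda T$ is unitarily equivalent to $T$, so that uniform Kreiss boundedness is equivalent to Ces\`aro boundedness, could also have finished the argument (Ces\`aro boundedness is adjoint-invariant as well); instead you used it only to rule weighted shifts out, because you had locked yourself into hunting for $\sqrt{n}$-growth.
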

\begin{proof}
Let $H$ be a separable infinite-dimensional Hilbert space with an orthonormal basis $(u_k)_{k\in \NN}$. Let $0<\alpha<1/2$. Let $T\in B(H)$ be defined by  $Tu_{k}:= \left( \frac{k+1}{k}\right)^\alpha  u_{k+1}$. A straightforward computation gives that $T$ is not absolutely Cesàro bounded since $\| T^nu_1\| =(n+1)^\alpha \to\infty$. Note that its adjoint $T^*$ is given by $T^* u_k=\left(\frac{k+1}{k}\right)^\alpha u_{k-1}$ for $k>1$ and $T^*u_1=0$. By Theorem \ref{ejemplos}, $T^*$ is absolutely Cesàro bounded, and hence uniformly Kreiss bounded. Since the uniform Kreiss boundedness is preserved by taking the adjoints, we deduce that $T$ is uniformly Kreiss bounded.
\end{proof}

\bigskip

It is easy to check that
\begin{equation}\label{media}
\frac{T^n}{n+1} = M_n(T)-\frac{n}{n+1} M_{n-1}(T)\;.
\end{equation}
We notice that Cesàro bounded operators satisfy that $\| T^n\|=O(n)$. Moreover, Theorem \ref{ejemplos} gives an example of a uniformly Kreiss bounded operator  on $\ell^1(\NN)$ such that $\| T^n\| =(n+1)^{1-\varepsilon}$ with $0<\varepsilon <1$.

We concentrate now on Question~\ref{pregunta1} for operators on Hilbert spaces.

\begin{theorem}\label{kreiss}
Let $T$ be a uniformly Kreiss bounded operator on a Hilbert space $H$. Then $\lim_{n\to\infty}n^{-1}\|T^n\|=0$.
\end{theorem}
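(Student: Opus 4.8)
The plan is to distil the hypothesis into a single quadratic estimate on orbits, coming from the Hilbert space geometry, and then to promote that averaged estimate to the pointwise bound $\|T^n\|=o(n)$. I would start from the reformulation of uniform Kreiss boundedness recalled in the Remark: there is $C>0$ with $\|M_n(\lambda T)\|\le C$ for all $|\lambda|=1$ and all $n$, equivalently $\|\sum_{k=0}^n\lambda^kT^k\|\le C(n+1)$ on the unit circle. Applying the identity (\ref{media}) to $\lambda T$ gives $\|T^n\|\le(n+1)\bigl(\|M_n(\lambda T)\|+\|M_{n-1}(\lambda T)\|\bigr)\le 2C(n+1)$, so the spectral radius of $T$ is at most $1$. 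If it is strictly less than $1$ then $\|T^n\|\to 0$ geometrically and there is nothing to prove, so I may assume $r(T)=1$.

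The engine of the argument is the following $L^2$-estimate, where the Hilbert space structure is essential. Fixing $x$ and writing $A_n(\lambda)x=\sum_{k=0}^n\lambda^kT^kx$, this is an $H$-valued trigonometric polynomial with $\sup_{|\lambda|=1}\|A_n(\lambda)x\|\le C(n+1)\|x\|$; integrating its square over the circle and invoking orthogonality of the characters (Parseval) yields
\[
\sum_{k=0}^n\|T^kx\|^2=\frac{1}{2\pi}\int_0^{2\pi}\bigl\|A_n(e^{i\theta})x\bigr\|^2\,d\theta\le C^2(n+1)^2\|x\|^2 .
\]
Equivalently the positive operators $G_n=\sum_{k=0}^nT^{*k}T^k$ satisfy $\|G_n\|\le C^2(n+1)^2$, and, since uniform Kreiss boundedness is preserved under adjoints, the same holds for $T^*$. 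In particular the quadratic Ces\`aro averages $\tfrac1{(n+1)^2}\sum_{k=0}^n\|T^kx\|^2$ are bounded by $C^2\|x\|^2$.

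To pass from averaged to pointwise control I would again combine this with (\ref{media}): a short computation using Parseval shows that $\tfrac{\|T^nx\|^2}{(n+1)^2}$ equals, up to an error $O(n^{-2})\|x\|^2$, the $\theta$-average of $\|M_n(\lambda T)x-M_{n-1}(\lambda T)x\|^2$. Thus the claim reduces to showing that the rotated Ces\`aro means $M_n(\lambda T)$ stabilise. Here I would invoke a mean ergodic argument: the family $\{M_n(\lambda T)\}$ is uniformly bounded, $H$ is reflexive, and the telescoping relation $G_N-T^*G_NT=I-T^{*(N+1)}T^{N+1}$ controls the increments of the normalised operators $G_N/(N+1)^2$; the goal is to deduce that $T^{*(N+1)}T^{N+1}/(N+1)^2\to 0$ in norm, which is exactly $\|T^n\|=o(n)$.

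The hard part is this final upgrade, and I expect it to be the crux. Every second-moment identity above is, on its own, consistent with growth $\|T^n\|\sim\beta n$ for some $\beta>0$: the displayed estimate is saturated by such an orbit, and neither it nor submultiplicativity combined with it can force $o(n)$. What must genuinely be exploited is that a single constant $C$ bounds $M_n(\lambda T)$ \emph{simultaneously} for every $x$ and every point of the circle. The delicate point is therefore to use this uniformity — rather than the behaviour of one orbit — to exclude a vector-dependent spike of $\|T^nx\|$ of order $n$; I expect this to rest on the strong convergence of $M_n(\lambda T)$ for each boundary $\lambda$ (equivalently, on the structure of the unimodular spectrum of $T$), and it is here that the essential difficulty lies.
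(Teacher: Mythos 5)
Your first step is correct and coincides with the paper's Claim 2.1 (Claim \ref{lemma1}): the Parseval identity on the circle turns the uniform bound $\|M_n(\lambda T)\|\le C$ into $\sum_{k=0}^{n}\|T^kx\|^2\le C^2(n+1)^2\|x\|^2$, and the observation that $T^*$ inherits uniform Kreiss boundedness is also used in the paper. But from that point on your proposal has a genuine gap, which you yourself flag: everything after the second-moment estimate is a plan, not an argument, and the plan as stated would not work. Strong convergence of $M_n(\lambda T)$ for each fixed $\lambda$ is the mean ergodicity of $\lambda T$, and on a reflexive space the classical mean ergodic theorem requires, besides Ces\`aro boundedness, precisely that $T^nx/n\to 0$ for each $x$ --- which is (the pointwise form of) what you are trying to prove, so this route is circular. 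Moreover, even if you had $\|T^nx\|/n\to 0$ for every $x$, that only yields $\sup_n\|T^n\|/n<\infty$ by uniform boundedness, not $\|T^n\|=o(n)$; the paper's Theorem \ref{residual} is devoted exactly to this pointwise-versus-norm dichotomy. Your telescoping identity $G_N-T^*G_NT=I-T^{*(N+1)}T^{N+1}$ is true but, combined with $\|G_N\|\le C^2(N+1)^2$, it again gives only $\|T^{N+1}\|^2=O(N^2)$, i.e.\ $O(n)$, not $o(n)$.

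What is missing is the mechanism the paper uses to break the scenario $\|T^n\|\sim\beta n$ that (as you correctly note) is consistent with the second-moment bound alone. The paper applies the Kreiss bound for $T^*$ not to an arbitrary vector but to $y=T^Nx$, and pairs $T^{*j}y$ against the unit vectors $T^{N-j}x/\|T^{N-j}x\|$; since $\langle T^{*j}y,T^{N-j}x\rangle=\|T^Nx\|^2$, this yields the backward control $\sum_{j=0}^{M-1}\|T^Nx\|^2/\|T^{N-j}x\|^2\le C^2M^2$ (Claim \ref{lemma2}), saying a spike $\|T^Nx\|>cN$ forces the orbit to stay large on a long stretch before time $N$. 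Combining this with the arithmetic--harmonic mean inequality (Claims \ref{lemma3} and \ref{lemma4}) and then feeding the \emph{normalized} orbit vector $y_\lambda=\sum_{j=0}^{N-1}(\lambda T)^jx/\|T^jx\|$ back into the Kreiss inequality gives a bootstrap: the upper bound is $C^2N^3$, while the lower bound is computed by splitting $\{N-2^K,\dots,N\}$ into $K$ dyadic blocks, each contributing a fixed amount $\ge c^2N^2/(4C^2)$ by Claim \ref{lemma4}, so the total is $\gtrsim c^2KN^3/(8C^4)$. Since $K$ can be taken arbitrarily large (only $N$ has to grow with it), this contradicts the fixed constant $C$. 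This accumulation of an unbounded factor $K$ from a single uniform constant is the idea your proposal lacks, and without it (or a substitute of comparable strength) the averaged estimate cannot be promoted to $\|T^n\|=o(n)$.
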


\begin{proof}
Let $C>0$ satisfy $\bigl\|\displaystyle \sum_{j=0}^{N-1}(\la T)^j\bigr\|\le CN$ for all $\la, |\la|=1$ and all $N$.
We need several claims.

\begin{claim}\label{lemma1}
Let $x\in H$, $\|x\|=1$ and $N\in\NN$. Then
$$
\sum_{j=0}^{N-1}\|T^jx\|^2\le C^2N^2.
$$
\end{claim}

\begin{proof}
Consider the normalized Lebesgue measure on the unit circle. We have
$$
C^2N^2\ge
\int_{|\la|=1} \bigl\|(I+\la T+\cdots+ (\la T)^{N-1})x\bigr\|^2 d\la
$$
$$
 =
\sum_{j,k=0}^{N-1}\int_{|\la|=1}\bigl\langle (\la T)^jx,(\la T)^kx\bigr\rangle d\la=
\sum_{j=0}^{N-1}\int_{|\la|=1}\bigl\langle (\la T)^jx,(\la T)^jx\bigr\rangle d\la=
\sum_{j=0}^{N-1}\|T^jx\|^2.
$$
\end{proof}

\begin{claim}\label{lemma2}
Let $0<M<N$ and $x\in H$, $\|x\|=1$. Then
$$
\sum_{j=0}^{M-1}\frac{\|T^Nx\|^2}{\|T^{N-j}x\|^2}\le C^2M^2.
$$
\end{claim}

\begin{proof}
Set $y=T^Nx$. Since $T^*$ is also uniformly Kreiss bounded, we have
$$
\int_{|\la|=1} \bigl\|(I+(\bar\la T^*)+\cdots+(\bar\la T^*)^{M-1})y\bigr\|^2 d\la\le C^2M^2\|y\|^2.
$$
On the other hand, as in Claim \ref{lemma1} we have
$$
\int_{|\la|=1} \bigl\|(I+(\bar\la T^*)+\cdots+(\bar\la T^*)^{M-1})y\bigr\|^2 d\la=
\sum_{j=0}^{M-1}\|T^{*j}y\|^2
$$
$$
\ge
\sum_{j=0}^{M-1}\Bigl|\Bigl\langle T^{*j}y,\frac{T^{N-j}x}{\|T^{N-j}x\|}\Bigr\rangle\Bigr|^2=
\sum_{j=0}^{M-1}\Bigl|\Bigl\langle y, \frac{T^Nx}{\|T^{N-j}x\|}\Bigr\rangle\Bigr|^2
\ge\|y\|^2 \sum_{j=0}^{M-1}\frac{\|T^Nx\|^2}{\|T^{N-j}x\|^2}.
$$
Hence
$$
\sum_{j=0}^{M-1}\frac{\|T^Nx\|^2}{\|T^{N-j}x\|^2}\le C^2M^2.
$$
\end{proof}

\begin{claim}\label{lemma3}
Let $x\in H$, $\|x\|=1$ and $N\in\NN$. Then
$$
\sum_{j=0}^{N-1}\frac{1}{\|T^jx\|}\ge \frac{\sqrt{N}}{C}.
$$
\end{claim}

\begin{proof}
Let $a_j=\|T^jx\|$. By Claim \ref{lemma1}, $\sum_{j=0}^{N-1}a_j^2\le C^2N^2$.
So
$$
\sum_{j=1}^{N-1}a_j\le\Bigl(\sum_{j=0}^{N-1} a_j^2\Bigr)^{1/2}\cdot\sqrt {N}\le
CN^{3/2}.
$$
Let $B=N\Bigl(\sum_{j=0}^{N-1}\frac{1}{a_j}\Bigr)^{-1}$ and $A=N^{-1}\sum_{j=0}^{N-1}a_j$ be the harmonic and arithmetic means of $a_j$'s for $j\in\{ 0, \ldots , N-1\}$, respectively. By the well-known inequality between these two means, we have
$$
\sum_{j=0}^{N-1}\frac{1}{\|T^jx\|}=
\frac{N}{B}\ge
\frac{N}{A}=
N^2\Bigl(\sum_{j=0}^{N-1}a_j\Bigr)^{-1}\ge
\frac{N^2}{CN^{3/2}}=
\frac{\sqrt{N}}{C}.
$$
\end{proof}

\begin{claim}\label{lemma4}
Let $0<M_1<M_2<N$ and $\|x\|=1$. Then
$$
\sum_{j=M_1}^{M_2-1}\frac{\|T^{N-j}x\|^2}{\|T^Nx\|^2}\ge
\frac{(M_2-M_1)^2}{C^2M_2^2}.
$$
\end{claim}

\begin{proof}
Let $a_j=\frac{\|T^{N-j}x\|^2}{\|T^Nx\|^2}$. By Claim \ref{lemma2},
$$
\sum_{j=M_1}^{M_2-1}\frac{1}{a_j}\le
\sum_{j=0}^{M_2-1}\frac{1}{a_j}\le C^2M_2^2.
$$
Let $A$ and $B$ be the arithmetic and harmonic mean of $a_j$'s for $j\in \{ M_1, \ldots , M_2-1\}$, respectively. We have
$$
\sum_{j=M_1}^{M_2-1} a_j=
(M_2-M_1)A\ge
(M_2-M_1)B=
(M_2-M_1)^2\Bigl(\sum_{j=M_1}^{M_2-1}\frac{1}{a_j}\Bigr)^{-1}\ge
\frac{(M_2-M_1)^2}{C^2M_2^2}.
$$
\end{proof}

\noindent{\it Proof of  Theorem \ref{kreiss}.}
Suppose on the contrary that $\limsup_{n\to\infty}n^{-1}\|T^n\|>c>0$.

Choose $K>8C^6c^{-2}$. Find $N>2^{K+1}$ with $\|T^N\|>cN$ and $x\in H$, $\|x\|=1$ with
$$
\|T^Nx\|>cN.
$$
For $|\la|=1$ let $y_\la=\sum_{j=0}^{N-1}\frac{(\la T)^jx}{\|T^jx\|}$.
Then
$$
\int_{|\la|=1}\|y_{\la}\|^2 d\la=N
$$
and
$$
\int_{|\la|=1}\bigl\|(I+\la T+\cdots+(\la T)^{N-1})y_{\la}\bigr\|^2 d\la\le
C^2N^2
\int_{|\la|=1}\|y_{\la}\|^2 d\la=C^2N^3.
$$

On the other hand,
$$
\int_{|\la|=1}\bigl\|(I+\la T+\cdots+(\la T)^{N-1})y_{\la}\bigr\|^2 d\la
$$
$$
=
\int_{|\la|=1}\Bigl\|\sum_{j=0}^{2N-2} (\la T)^jx \sum_{r=0}^{\min\{N-1,j\}}\frac{1}{\|T^rx\|}\Bigr\|^2 d\la
$$
$$
=
\sum_{j=0}^{2N-2} \|T^jx\|^2 \Bigl(\sum_{r=0}^{\min\{N-1,j\}}\frac{1}{\|T^rx\|}\Bigr)^2\ge
\sum_{j=N-2^K}^{N} \|T^jx\|^2 \Bigl(\sum_{r=0}^{N-2^K}\frac{1}{\|T^rx\|}\Bigr)^2,
$$
where
$$
\sum_{r=0}^{N-2^K}\frac{1}{\|T^rx\|}\ge \frac{\sqrt{N-2^K}}{C}\ge\frac{\sqrt{N}}{C\sqrt{2}}
$$
and
$$
\sum_{j=N-2^K}^{N} \|T^jx\|^2\ge
\|T^Nx\|^2\sum_{k=0}^{K-1}  \sum_{j=N-2^{k+1}}^{N-2^k-1} \frac{\|T^jx\|^2}{\|T^Nx\|^2}\ge
c^2N^2\sum_{k=0}^{K-1} \frac{2^{2k}}{C^2 2^{2k+2}}=
\frac{c^2N^2K}{4C^2}.
$$
Hence
$$
\int_{|\la|=1}\bigl\|(I+\la T+\cdots+(\la T)^{N-1})y_{\la}\bigr\|^2 d\la\ge
\frac{c^2N^2K}{4C^2}\cdot\frac{N}{2C^2}=\frac{c^2KN^3}{8C^4}> C^2N^3,
$$
a contradiction. This finishes the proof.

\end{proof}

\begin{corollary} Any uniformly Kreiss bounded  operator on a Hilbert space is mean ergodic.
\end{corollary}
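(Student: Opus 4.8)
The plan is to reduce the statement to the classical mean ergodic theorem on reflexive spaces, whose two hypotheses are exactly what the preceding results provide. First I would note that a uniformly Kreiss bounded operator is Cesàro bounded: taking $\la=1$ in the characterization $\|M_n(\la T)\|\le C$ (for $|\la|=1$) gives $C:=\sup_n\|M_n(T)\|<\infty$. Second, Theorem \ref{kreiss} yields $\|T^n\|/n\to 0$, hence $T^nx/n\to 0$ for every $x\in H$. Since a Hilbert space is reflexive, these are precisely the ingredients needed.

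The heart of the argument is the mean ergodic decomposition
\[
H=\ker(I-T)\oplus\overline{\operatorname{ran}(I-T)}.
\]
The easy inclusions come first. On $\ker(I-T)$ one has $M_n(T)x=x$, so the means converge there. On $\operatorname{ran}(I-T)$ the telescoping identity $M_n(T)(I-T)y=\frac{1}{n+1}(y-T^{n+1}y)$ together with $\|T^{n+1}y\|/(n+1)\to 0$ forces $M_n(T)x\to 0$, and Cesàro boundedness extends this to all of $\overline{\operatorname{ran}(I-T)}$ by density. The sum is direct, since any $w$ in the intersection satisfies both $M_n(T)w=w$ and $M_n(T)w\to 0$, whence $w=0$.

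The main obstacle, and the only point where reflexivity is used, is to prove that this sum exhausts $H$. Fix $x\in H$; by Cesàro boundedness the sequence $(M_n(T)x)_n$ is bounded, so reflexivity provides a subsequence with $M_{n_k}(T)x\rightharpoonup z$. I would then verify $z\in\ker(I-T)$ from the fact that $(I-T)M_n(T)x=\frac{1}{n+1}(x-T^{n+1}x)\to 0$ in norm while $I-T$ is weakly continuous, and verify $x-z\in\overline{\operatorname{ran}(I-T)}$ by noting that $M_n(T)x-x\in\operatorname{ran}(I-T)$ for every $n$ and that the norm closure of this convex subspace is also its weak closure. Thus $x=z+(x-z)$ lies in the decomposition. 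With the splitting established, $M_n(T)$ converges on each summand, to the identity on $\ker(I-T)$ and to $0$ on $\overline{\operatorname{ran}(I-T)}$, and the uniform bound $C$ propagates convergence to all of $H$, so $M_n(T)x$ converges for every $x$ and $T$ is mean ergodic. The delicate input is entirely contained in Theorem \ref{kreiss}; what follows is the routine reflexive mean ergodic machinery.
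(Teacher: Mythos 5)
Your proof is correct and follows exactly the route the paper intends: the corollary is stated without proof as an immediate consequence of Theorem \ref{kreiss}, the point being that uniform Kreiss boundedness gives Ces\`aro boundedness (take $\lambda=1$ in $\|M_n(\lambda T)\|\le C$), Theorem \ref{kreiss} gives $\|T^n\|/n\to 0$, and then the classical mean ergodic theorem on reflexive spaces applies. You have simply written out in full the standard splitting $H=\ker(I-T)\oplus\overline{\operatorname{ran}(I-T)}$ argument that the paper takes for granted, and every step of it is sound.
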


We are interested on the behavior  of $\frac{\| T^n\|}{n}$ when $T$ is an absolutely Cesàro bounded operator. The following result provides an answer.

\begin{theorem}\label{residual}
Let $X$ be a Banach space, $C>0$ and let $T\in B(X)$ satisfy $\|T^n\|\le Cn$ for all $n\in\NN$. Then either $\displaystyle \lim_{n\to\infty}n^{-1}\|T^n\|=0$ or the set
$$
\Bigl\{x\in X: \sup_N N^{-1}\sum_{n=1}^N\|T^nx\|=\infty\Bigr\}
$$
is residual in $X$.
\end{theorem}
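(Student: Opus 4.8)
The plan is to prove the contrapositive via the Baire category theorem. Write $E$ for the set displayed in the statement, and for each $N\in\NN$ let $p_N(x):=N^{-1}\sum_{n=1}^N\|T^nx\|$, a continuous sublinear functional on $X$ satisfying $p_N(z+w)\le p_N(z)+p_N(w)$ and $p_N(-z)=p_N(z)$. For each $m\in\NN$ set $A_m:=\{x\in X:\sup_N p_N(x)\le m\}=\bigcap_N p_N^{-1}([0,m])$, which is closed because every $p_N$ is continuous. Since $X\setminus E=\{x:\sup_N p_N(x)<\infty\}=\bigcup_m A_m$, if $E$ is \emph{not} residual then $\bigcup_m A_m$ is non-meager, and Baire's theorem forces some $A_{m_0}$ (closed) to have nonempty interior.

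The second step is a standard homogenization. Suppose $A_{m_0}$ contains a ball $B(x_0,r)$. For $\|y\|\le r$ both $x_0+y$ and $x_0$ lie in $A_{m_0}$, so subadditivity and symmetry give
$$
p_N(y)=p_N\bigl((x_0+y)+(-x_0)\bigr)\le p_N(x_0+y)+p_N(x_0)\le 2m_0
$$
for every $N$. Scaling an arbitrary $x\neq0$ to $ry/\|y\|$ then yields $\sup_N p_N(x)\le C'\|x\|$ for all $x$, with $C'=2m_0/r$; that is, $T$ is absolutely Ces\`aro bounded.

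The crucial step is to upgrade absolute Ces\`aro boundedness, in the presence of $\|T^n\|\le Cn$, to $\|T^n\|=o(n)$. I would fix $x$ with $\|x\|=1$ and use both bounds at once: for $1\le j\le n-1$ the factorization $T^nx=T^{n-j}(T^jx)$ together with $\|T^{n-j}\|\le C(n-j)$ gives $\|T^nx\|\le C(n-j)\|T^jx\|$, hence $\frac{\|T^nx\|}{C(n-j)}\le\|T^jx\|$. Summing over $j=1,\dots,n-1$ and reindexing by $i=n-j$ produces
$$
\frac{\|T^nx\|}{C}\sum_{i=1}^{n-1}\frac1i\le\sum_{j=1}^{n-1}\|T^jx\|\le C'n,
$$
where the last bound is the absolute Ces\`aro estimate with $N=n-1$. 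Writing $H_{n-1}=\sum_{i=1}^{n-1}i^{-1}$ and taking the supremum over unit vectors gives $\|T^n\|\le CC'n/H_{n-1}$, so that $n^{-1}\|T^n\|\le CC'/H_{n-1}\to0$, which is the first alternative of the dichotomy.

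The routine ingredients (closedness of the $A_m$, the Baire dichotomy, and the translation/scaling to absolute Ces\`aro boundedness) are standard Banach--Steinhaus-type manipulations; the only real content is the last paragraph. I expect the main obstacle to be getting that averaging right: it is precisely the divergence of the harmonic series $H_{n-1}\to\infty$ that converts the interplay between the pointwise bound $\sum_{j\le n}\|T^jx\|\le C'n$ and the operator bound $\|T^{n-j}\|\le C(n-j)$ into the extra logarithmic gain forcing the limit to vanish. Finally, since for an absolutely Ces\`aro bounded operator the set $E$ is empty and hence not residual, this theorem specializes to the claimed estimate $\|T^n\|=o(n)$ in that case.
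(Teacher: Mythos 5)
Your proof is correct, but it runs in the opposite logical direction from the paper's and uses a dual category argument, so it is worth comparing the two. The paper proceeds directly: assuming $\limsup_n n^{-1}\|T^n\|>c>0$, it considers the \emph{open} sets $M_s=\{x\in X:\sup_N N^{-1}\sum_{n=1}^N\|T^nx\|>s\}$, shows each $M_s$ contains a unit vector via the same harmonic-series computation you use (from $\|T^Nx\|>cN$ and $\|T^k\|\le Ck$ one gets $\|T^{N-k}x\|\ge cN/(Ck)$, so the Ces\`aro average exceeds $(c/C)\ln(N-1)$), and then shows each $M_s$ is \emph{dense} by a two-point trick: $\|T^j(y+\eps x)\|+\|T^j(y-\eps x)\|\ge 2\eps\|T^jx\|$ forces one of $y\pm\eps x$ into $M_s$, and Baire's theorem makes $\bigcap_s M_s$ residual. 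You instead argue contrapositively, Banach--Steinhaus style: non-residuality forces some closed set $A_{m_0}$ to have nonempty interior, translation and scaling convert this into the uniform estimate (absolute Ces\`aro boundedness), and the same factorization $T^nx=T^{n-j}(T^jx)$ with $\|T^{n-j}\|\le C(n-j)$, now read as an \emph{upper} bound on $\|T^nx\|$, yields $n^{-1}\|T^n\|\le CC'/H_{n-1}\to 0$. The analytic core (the divergence of the harmonic series played against the factorization and the linear growth bound) is identical in both proofs; the difference is organizational. Your route has the merit of isolating the clean intermediate implication that absolute Ces\`aro boundedness together with $\|T^n\|=O(n)$ gives $\|T^n\|=o(n)$, which is exactly how the paper applies the theorem afterwards (its Corollary~\ref{ACB}); the paper's route constructs the residual set directly and never needs a uniform bound. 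One minor imprecision: the step ``a non-meager countable union of closed sets contains one with nonempty interior'' is just the definition of meagerness (a closed set with empty interior is nowhere dense), not Baire's theorem; Baire is needed only where you invoke, in your closing remark, that a residual subset of a Banach space is nonempty.
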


\begin{proof}
Suppose that $\frac{\|T^n\|}{n}\not\to 0$. So there exists $c>0$ such that
$$
\limsup_{n\to\infty}n^{-1}\|T^n\|>c.
$$

For $s\in\NN$ let
$$
M_s=\Bigl\{x\in X: \sup_N N^{-1}\sum_{n=1}^N\|T^nx\|>s\Bigr\}.
$$
Clearly $M_s$ is open.

We show first that each $M_s$ contains a unit vector. Let $s\in\NN$. Find $N>\exp\Bigl(\frac{Cs}{c}\Bigr)+1$ with $\|T^N\|>cN$. Find a unit vector $x\in X$ such that
$\|T^Nx\|> cN$.

For $k=1,\dots,N-1$ we have $\|T^Nx\|\le \|T^k\|\cdot\|T^{N-k}x\|$, and so
$$
\|T^{N-k}x\|\ge\frac{\|T^Nx\|}{\|T^k\|}\ge\frac{cN}{Ck}.
$$
Thus
$$
N^{-1}\sum_{k=1}^N\|T^jx\|\ge
\sum_{k=1}^{N-1}\frac{c}{Ck}\ge
\frac{c}{C}\ln(N-1)>s,
$$
and so $x\in M_s$.

We show that in fact each $M_s$ is dense.
Fix $s\in\NN$, $y\in X$ and $\e>0$. Let $s'>\frac{s}{\e}$. Find $x\in M_{s'}$, $\|x\|=1$. For each $j\in\NN$ we have
$$
\|T^j(y+\e x)\|+\|T^j(y-\e x)\|\ge
2\e \|T^jx\|.
$$
So
$$
\sup_N N^{-1}\sum_{j=1}^N\|T^j(y+\e x)\|+
\sup_N N^{-1}\sum_{j=1}^N\|T^j(y-\e x)\|\ge
\sup_N \frac{2\e}{N}\sum_{j=1}^N\|T^jx\|>
2\e s'>2s.
$$
Hence either $y+\e x\in M_s$ or $y-\e x\in M_s$. Since $\e>0$ was arbitrary, $M_s$ is dense.

By the Baire category theorem,
$$
\bigcap_{s+1}^\infty M_s=\Bigl\{x\in X: \sup_N N^{-1}\sum_{j=1}^N\|T^jx\|=\infty\Bigr\}
$$
is a residual set.
\end{proof}

\begin{corollary}\label{ACB}
Let $T\in B(X)$ be an absolutely Ces\`{a}ro bounded operator. Then $\displaystyle \lim_{n\to\infty}\frac{\|T^n\|}{n}=0$.
\end{corollary}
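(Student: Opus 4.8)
The plan is to obtain this as an immediate consequence of Theorem \ref{residual}: I would verify its hypothesis and then show that the second alternative of its dichotomy cannot occur.

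First I would check that absolute Cesàro boundedness already forces the linear growth bound $\|T^n\|\le Cn$ required by Theorem \ref{residual}. This is immediate: for any unit vector $x$, isolating the top term of the Cesàro average gives $\frac{1}{N}\|T^Nx\|\le \frac{1}{N}\sum_{j=1}^N\|T^jx\|\le C$, so that $\|T^N\|\le CN$ for every $N$. Thus $T$ satisfies the standing assumption of Theorem \ref{residual} with the same constant $C$.

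Next I would apply Theorem \ref{residual}, which yields a dichotomy: either $\lim_{n\to\infty}n^{-1}\|T^n\|=0$, or the set $\{x\in X:\sup_N N^{-1}\sum_{n=1}^N\|T^nx\|=\infty\}$ is residual in $X$. To finish, I would exclude the second alternative. By the very definition of absolute Cesàro boundedness, every $x\in X$ satisfies $\sup_N N^{-1}\sum_{n=1}^N\|T^nx\|\le C\|x\|<\infty$, so the exceptional set is in fact empty, hence certainly not residual. Therefore the first alternative must hold, which is exactly the claim $\lim_{n\to\infty}\|T^n\|/n=0$.

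There is essentially no obstacle here once Theorem \ref{residual} is in hand; the corollary is a clean contradiction argument against residuality. The only point meriting an explicit line of justification is the passage from the averaged bound to $\|T^n\|=O(n)$, which is what makes the theorem applicable, and everything else reduces to reading off the definition of absolute Cesàro boundedness.
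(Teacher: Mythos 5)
Your proof is correct and is essentially identical to the paper's own argument: both extract the bound $\|T^n x\|\le\sum_{k=1}^n\|T^k x\|\le Cn\|x\|$ to verify the hypothesis of Theorem \ref{residual}, and then rule out the residual-set alternative because absolute Ces\`aro boundedness makes that set empty. Nothing further is needed.
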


\begin{proof}
There exists $C>0$ such that
$$
\|T^nx\|\leq \sum_{k=1}^n \| T^kx\|\leq Cn\|x\|
$$
for all $x\in X$.
By Theorem \ref{residual}, we have that  $\displaystyle\lim_{n\to\infty}\frac{\|T^n\|}{n}=0$, since the second possibility in  Theorem \ref{residual} contradicts to the assumption that $T$ is absolutely Ces\`{a}ro bounded.
\end{proof}

As consequence, we obtain a result that, for operators on Banach spaces, slightly improves Lorch theorem \cite{ABR09}.

\begin{corollary} Any absolutely Ces\`{a}ro bounded operator on a reflexive Banach space is mean ergodic.
\end{corollary}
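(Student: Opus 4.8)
The plan is to recover the classical Yosida--Kakutani mean ergodic theorem from the two facts now at our disposal. First, absolute Ces\`aro boundedness forces the Ces\`aro means themselves to be uniformly bounded: from $\frac{1}{N}\sum_{j=1}^N\|T^jx\|\le C\|x\|$ one gets $\|M_N(T)x\|\le \frac{1}{N+1}\bigl(\|x\|+\sum_{k=1}^N\|T^kx\|\bigr)\le (C+1)\|x\|$, so $\sup_N\|M_N(T)\|<\infty$. Second, Corollary~\ref{ACB} gives $\|T^n\|/n\to 0$, and hence $T^nx/n\to 0$ strongly for every $x\in X$. These are precisely the hypotheses under which the mean ergodic decomposition works.

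Next I would isolate the two subspaces on which convergence is transparent. Let $F=\ker(I-T)$ and $N=\overline{(I-T)X}$. On $F$ one has $M_n(T)x=x$ for all $n$. On the range $(I-T)X$ the telescoping identity $M_n(T)(I-T)=\frac{1}{n+1}(I-T^{n+1})$ yields $\|M_n(T)(I-T)y\|\le \frac{1}{n+1}\|y\|+\frac{\|T^{n+1}\|}{n+1}\|y\|\to 0$; since $\sup_n\|M_n(T)\|<\infty$, this convergence extends by density to all of $N$, so $M_n(T)x\to 0$ for every $x\in N$. If $x\in F\cap N$ then $x=M_n(T)x\to 0$, whence $F\cap N=\{0\}$, and $M_n(T)$ already converges strongly on the direct sum $F\oplus N$, namely to the projection onto $F$ along $N$.

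The substance of the argument is to show $F\oplus N=X$, and this is where reflexivity enters. Fix $x\in X$. The sequence $(M_n(T)x)_n$ is bounded, so by reflexivity together with the Eberlein--\v{S}mulian theorem it has a subsequence with $M_{n_k}(T)x\rightharpoonup z$ for some $z\in X$. From $(I-T)M_n(T)x=\frac{1}{n+1}(x-T^{n+1}x)\to 0$ strongly and the weak continuity of the bounded operator $I-T$, we obtain $(I-T)z=0$, i.e. $z\in F$. On the other hand $x-M_n(T)x=\frac{1}{n+1}\sum_{k=0}^n(I-T^k)x$ lies in $(I-T)X$, so each $x-M_{n_k}(T)x\in N$; since $N$ is convex and norm-closed it is weakly closed, whence $x-z\in N$. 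Therefore $x=z+(x-z)\in F+N$, proving $X=F\oplus N$ and, by the previous paragraph, that $M_n(T)$ converges strongly on all of $X$.

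The only delicate point is this reflexivity step: extracting the weak limit $z$ and identifying it as a fixed vector. Reflexivity is invoked twice --- to guarantee the weakly convergent subsequence and to ensure that the closed convex set $N$ is weakly closed --- and it is exactly this input that is unavailable for general Banach spaces, where one can only assert strong convergence on the (possibly proper) subspace $F\oplus N$.
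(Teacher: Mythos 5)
Your proof is correct and follows exactly the route the paper intends: the paper deduces this corollary by combining Corollary~\ref{ACB} ($\|T^n\|=o(n)$) with the obvious bound $\sup_n\|M_n(T)\|<\infty$ and then invoking the classical mean ergodic theorem for reflexive spaces (the Lorch-type result cited via \cite{ABR09}), which is precisely the decomposition $X=\ker(I-T)\oplus\overline{(I-T)X}$ you carry out. The only difference is that you prove that classical ergodic decomposition in full rather than citing it, which makes your argument self-contained but not substantively different.
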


Hence by Corollary \ref{mixing}, we have that

\begin{corollary}
There exist   mean ergodic and mixing operators on $\ell^p(\mathbb{N})$ for $1< p <\infty$ .
\end{corollary}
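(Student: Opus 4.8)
The plan is to simply combine the two corollaries already in hand, since the statement is an existence claim and both required properties have been produced separately. First I would fix $p$ with $1<p<\infty$, choose any $\varepsilon>0$, and invoke Corollary~\ref{mixing} to obtain an operator $T$ on $\ell^p(\NN)$ that is topologically mixing and, at the same time, absolutely Ces\`aro bounded (with the explicit growth $\|T^n\|=(n+1)^{(1-\varepsilon)/p}$, though only the qualitative properties are needed here). This single operator already settles the mixing half of the assertion, and it supplies the hypothesis needed for the other half.

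Second, I would observe that $\ell^p(\NN)$ is reflexive exactly in the range $1<p<\infty$. Consequently the operator $T$ produced above meets the hypotheses of the preceding corollary --- any absolutely Ces\`aro bounded operator on a reflexive Banach space is mean ergodic --- so that corollary applies verbatim and yields that $T$ is mean ergodic. Putting the two observations together, the same $T$ is simultaneously mixing and mean ergodic on $\ell^p(\NN)$, which is precisely the desired existence statement.

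There is essentially no obstacle to overcome: the entire content is carried by Corollary~\ref{mixing} and by the reflexive-space mean ergodicity corollary, and the only point requiring a word of care is the restriction $1<p<\infty$. That restriction is exactly what guarantees reflexivity of $\ell^p(\NN)$; for $p=1$ the space fails to be reflexive, so this particular derivation of mean ergodicity breaks down, which is why the statement is phrased only for $1<p<\infty$. Thus the proof amounts to recording that the mixing operator of Corollary~\ref{mixing} lives on a reflexive space and therefore inherits mean ergodicity from the earlier corollary.
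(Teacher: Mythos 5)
Your proposal is correct and is exactly the paper's argument: the paper derives this corollary by applying Corollary~\ref{mixing} (an absolutely Ces\`aro bounded mixing operator on $\ell^p(\NN)$) together with the immediately preceding corollary that absolutely Ces\`aro bounded operators on reflexive Banach spaces are mean ergodic, using reflexivity of $\ell^p(\NN)$ for $1<p<\infty$. Your additional remark about why $p=1$ is excluded matches the paper's implicit reasoning as well.
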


It is worth to mention that results of this type already appear in the PhD Thesis of Mar\'{\i}a Jos\'e Beltr\'an Meneu \cite{beltran14}, provided by the fourth author (see Section 3.7 in \cite{beltran14}), and in \cite{AS}.

For $0<\varepsilon<1$, by Theorem \ref{ejemplos} we have  an example of absolutely Ces\`{a}ro bounded operators on $\ell^2(\mathbb{N})$
such that $\|T^n\|= (n+1)^{\frac{1}{2}-\varepsilon}$. On the other hand,
if there exists $\varepsilon >0$ such that   $\|T^n\|\ge Cn^{\frac{1}{2}+\varepsilon}$ for all $ n$ in a Hilbert space,
then  by \cite[Theorem 3]{MV}, there exists $x\in X$ such that $\|T^nx\|\rightarrow \infty$, thus $T$ is not
absolutely Ces\`{a}ro bounded. Hence it is natural to ask: does every absolutely Ces\`{a}ro bounded operator on a Hilbert space satisfy  $\lim_{n\to\infty} n^{-1/2}\|T^n\|=0$?

\begin{theorem} \label{acbhilbert}
Let $H$ be a Hilbert space and let $T\in B(H)$ be an absolutely Ces\`{a}ro bounded operator. Then $\displaystyle \lim_{n\to\infty}\displaystyle \frac{ \|T^n\|}{n^{1/2}}=0$.
\end{theorem}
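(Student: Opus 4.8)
The plan is to run the same integral-comparison argument as in the proof of Theorem \ref{kreiss}, upgrading exactly one estimate so as to exploit the full strength of absolute Cesàro boundedness. First I would record that an absolutely Cesàro bounded operator is uniformly Kreiss bounded (as noted in the remarks above), and that uniform Kreiss boundedness is preserved under taking adjoints; hence both $T$ and $T^*$ are uniformly Kreiss bounded, and Claims \ref{lemma2} and \ref{lemma4} remain available verbatim. I would fix a single constant $C$ dominating simultaneously the absolute Cesàro constant of $T$ and the uniform Kreiss constants of $T$ and $T^*$, so that $\|\sum_{j=0}^{N-1}(\la T)^j\|\le CN$ for $|\la|=1$, and $\sum_{j=0}^{N-1}\|T^j x\|\le CN$ for every unit vector $x$ (enlarging $C$ to absorb the $j=0$ term).

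The one genuinely new ingredient replaces Claim \ref{lemma3}. There the bound $\sum_{j=0}^{N-1}\|T^j x\|^{-1}\ge \sqrt N/C$ came from the $L^2$ estimate of Claim \ref{lemma1}; here the $L^1$ absolute Cesàro bound yields the stronger inequality $\sum_{j=0}^{N-1}\|T^j x\|^{-1}\ge N/C$. Indeed, writing $a_j=\|T^j x\|$ and applying the arithmetic--harmonic mean inequality to $a_0,\dots,a_{N-1}$ gives $\sum_{j=0}^{N-1}a_j^{-1}\ge N^2\bigl(\sum_{j=0}^{N-1}a_j\bigr)^{-1}\ge N^2/(CN)=N/C$. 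This extra factor $\sqrt N$ is precisely what compensates for starting from the weaker growth hypothesis $\|T^N\|>c\sqrt N$ rather than $\|T^N\|>cN$.

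The contradiction step then mirrors the end of the proof of Theorem \ref{kreiss}. Assuming $\limsup_n n^{-1/2}\|T^n\|>c>0$, I would fix $K>16C^6c^{-2}$, then choose $N>2^{K+1}$ and a unit vector $x$ with $\|T^N x\|>c\sqrt N$, and set $y_\la=\sum_{j=0}^{N-1}\|T^j x\|^{-1}(\la T)^j x$. As before $\int_{|\la|=1}\|y_\la\|^2\,d\la=N$, so uniform Kreiss boundedness bounds $\int_{|\la|=1}\|(I+\la T+\cdots+(\la T)^{N-1})y_\la\|^2\,d\la$ from above by $C^2N^3$. Expanding the same integral, restricting the sum to the dyadic window $N-2^K\le m\le N-1$, and using orthogonality in $\la$, the relevant coefficient squared is at least $\bigl(\sum_{r=0}^{N-2^K}\|T^r x\|^{-1}\bigr)^2\ge N^2/(4C^2)$ by the improved Claim \ref{lemma3}, while summing Claim \ref{lemma4} over the $K$ dyadic blocks gives $\sum_{m=N-2^K}^{N-1}\|T^m x\|^2\ge \|T^N x\|^2K/(4C^2)>c^2NK/(4C^2)$; multiplying produces a lower bound $c^2KN^3/(16C^4)$, which exceeds $C^2N^3$ by the choice of $K$, the desired contradiction.

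The main obstacle I anticipate is conceptual rather than computational: one must resist seeking an $L^1$ analogue of Claims \ref{lemma2} and \ref{lemma4}, since $T^*$ need not be absolutely Cesàro bounded and only its uniform Kreiss boundedness is at hand. The entire gain over the general Banach-space estimate $\|T^n\|=o(n)$ of Corollary \ref{ACB} is therefore squeezed out of the single strengthened reciprocal-sum inequality above; the remaining work (the orthogonality computation, the dyadic decomposition, the observation that $\|T^N x\|\neq 0$ makes all reciprocals well defined, and the tracking of $C$) is identical to Theorem \ref{kreiss}.
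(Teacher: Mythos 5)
Your proof is correct, but it follows a genuinely different route from the paper's own proof of Theorem \ref{acbhilbert}. You adapt the proof of Theorem \ref{kreiss} wholesale: you keep Claims \ref{lemma2} and \ref{lemma4} (which indeed only need uniform Kreiss boundedness of $T$ and $T^*$, both available here), you strengthen Claim \ref{lemma3} from $\sum_{j=0}^{N-1}\|T^jx\|^{-1}\ge \sqrt{N}/C$ to $\ge N/C$ via the arithmetic--harmonic mean inequality applied to the absolute Ces\`aro bound $\sum_{j=0}^{N-1}\|T^jx\|\le CN$, and you rerun the dyadic-block contradiction with $\|T^Nx\|>c\sqrt{N}$ in place of $cN$; the extra factor of $N$ in the squared coefficient exactly offsets the weaker growth hypothesis, and your bookkeeping (the choice $K>16C^6c^{-2}$, the window $N-2^K\le m\le N-1$ which neatly avoids the boundary index $m=N$, and the final comparison of $c^2KN^3/(16C^4)$ against $C^2N^3$) checks out. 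The paper instead proves the theorem without Claims \ref{lemma2} and \ref{lemma4} and without any dyadic decomposition: it splits into two cases according to whether $\limsup_n n^{-1/2}\|T^n\|$ is infinite or finite. In both cases the crucial reciprocal-sum gain comes from a median argument rather than AM--HM: the median $A$ of $\{\|T^jx_m\|: 2N_m'\le j<4N_m'\}$ satisfies $A\le 4C$ by absolute Ces\`aro boundedness, so the reciprocal sum over that window is $\ge N_m'/(4C)$, the same order $N/C$ as your bound. The window sum is then handled differently in each case: in Case I the single term $\|T^{N_m}x_m\|^2>K_m^2N_m$ with $K_m\to\infty$ suffices, while in Case II the finiteness of the limsup gives $\|T^{N_m-j}\|\le 2K(N_m-j)^{1/2}$, whence $\sum_j\|T^jx_m\|^2\gtrsim N_m\ln N_m$, a logarithmic divergence playing the role of your factor $K$. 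In short, your approach buys a uniform, one-case argument at the cost of invoking the adjoint-based machinery of Theorem \ref{kreiss}, whereas the paper's proof avoids the adjoint claims entirely at the cost of a case distinction; both hinge on the identical key insight, namely that absolute Ces\`aro boundedness upgrades the reciprocal-sum estimate from order $\sqrt{N}$ to order $N$.
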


\begin{proof}
Let $C>0$ satisfy $N^{-1}\sum_{n=0}^{N-1}\|T^nx\|<C\|x\|$ for all $N\in\NN$ and $x\in H$.

Suppose on the contrary that $\limsup_{n\to\infty} N^{-1/2}\|T^n\|>0$.
We distinguish two cases:

\medskip
\leftline{{\it Case I}. Suppose that $\limsup_{n\to\infty} n^{-1/2}\|T^n\|=\infty$.}

Then there exist positive integers $N_1<N_2<\cdots$ and positive constants $K_1<K_2<\cdots$ with $\lim_{m\to\infty}K_m=\infty$ such that $\|T^{N_m}\|>K_m N_m^{1/2}$ and
$$
\|T^j\|\le 2K_m j^{1/2}\qquad(j\le N_m).
$$
Let $x_m\in H$ be a unit vector satisfying $\|T^{N_m}x_m\|> K_m N_m^{1/2}$.

Let $N_m'=\Bigl[\frac{N_m}{6}\Bigr]$ \ \ (the integer part).
Consider the set
$$
\{\|T^jx_m\|: 2N_m'\le j< 4N_m'\}.
$$
Let $A$ be the median of this set. More precisely, we have
$$
\card\{j: 2N_m'\le j< 4N_m', \|T^jx_m\|\ge A\} \ge N_m'\qquad\hbox{and}
$$
$$
\card\{j: 2N_m'\le j< 4N_m', \|T^jx_m\|\le A\} \ge N_m'.
$$
We have
$$
4N_m' C\ge
\sum_{j=0}^{4N_m'-1}\|T^jx_m\|\ge
\sum_{j=2N_m'}^{4N_m'-1}\|T^jx_m\|\ge
N_m'A.
$$
So $A\le 4C$ \ \ (note that this estimate does not depend on $m$).

For $\la\in\CC$, $|\la|=1$ let
$$
y_{m,\la}=\sum_{j=1}^{N_m}\frac{(\la T)^jx_m}{\|T^jx_m\|}.
$$
Then
$$
\int\|y_{m,\la}\|^2 d\la=
\int\sum_{j,j'=1}^{N_m} \frac{\langle \la^j T^jx_m, \la^{j'} T^{j'}x_m\rangle}
{\|T^jx_m\|\cdot\|T^{j'}x_m\|} d\la
$$
$$
=
\int\sum_{j=1}^{N_m} \frac{\langle T^jx_m, T^{j} x_m\rangle}
{\|T^jx_m\|^2} d\la=N_m.
$$
Let
$$
u_{m,\la}=(I+\la T+\cdots+ (\la T)^{N_m-1})y_{m,\la}.
$$
Then $\|u_{m,\la}\|\le CN_m\|y_{m,\la}\|$ and
$$
\int\|u_{m,\la}\|^2 d\la\le
C^2N_m^2\int\|y_{m,\la}\|^2 d\la=
C^2N_m^3.
$$
On the other hand,
$$
u_{m,\la}=\sum_{j=1}^{N_m} (\la T)^j x_m \sum_{k=1}^j
\frac{1}{\|T^kx_m\|}+
 \sum_{j=N_m+1}^{2N_m-1}(\la T)^jx_m\sum_{k=j-N_m+1}^{N_m}\frac{1}{\|T^kx_m\|}.
$$
As above,
$$
\int \|u_{m,\la}\|^2 d\la\ge
\sum_{j=1}^{N_m} \|T^j x_m\|^2\Bigl(\sum_{k=1}^{j}\frac{1}{\|T^kx_m\|}\Bigr)^2
\ge
\|T^{N_m}x_m\|^2\Bigl(\sum_{k=2N_m'}^{4N_m'-1}\frac{1}{\|T^kx_m\|}\Bigr)^2
$$
$$
\ge
K_m^2 N_m \cdot \Bigl(\frac{N'_m}{A}\Bigr)^2\ge
K_m^2\cdot{\rm const}\cdot N_m^3.
$$
Since $K_m\to\infty$, this is a contradiction.

\medskip
{\it Case II.} Let $K$ satisfy $0<K<\limsup_{n\to\infty}n^{-1/2}\|T^n\|<2K$.

Let $N_0$ satisfy $n^{-1/2}\|T^n\|\le 2K\quad(n\ge N_0)$.
Find an increasing sequence $(N_m)$ of positive integers such that
$\|T^{N_m}\|>KN_m^{1/2}$. Find $x_m$, $\|x_m\|=1$ such that $\|T^{N_m}x_m\|>KN_m^{1/2}$.

As in case I, let $N_m'=\Bigl[\frac{N_m}{6}\Bigr]$ and let $A$ be the median of the set
$$
\{\|T^jx_m\|: 2N_m'\le j< 4N_m'\}.
$$
Again one has $A\le 4C$.

As in case I, for $|\la|=1$ let
$$
y_{m,\la}=\sum_{j=1}^{N_m}\frac{(\la T)^jx_m}{\|T^jx_m\|}
$$
and
$$
u_{m,\la}=(I+\la T+\cdots+ (\la T)^{N_m-1})y_{m,\la}.
$$
Again we have $\displaystyle\int\|y_{m,\la}\|^2 d\la=N_m$ and
$$
\int\|u_{m,\la}\|^2 d\la\le
C^2N_m^3.
$$
On the other hand,
$$
u_{m,\la}=
\sum_{j=1}^{N_m} (\la T)^j x_m \sum_{k=1}^j
\frac{1}{\|T^kx_m\|}+
 \sum_{j=N_m+1}^{2N_m-1}(\la T)^jx_m\sum_{k=j-N_m+1}^{N_m}\frac{1}{\|T^kx_m\|}
$$
and
$$
\int \|u_{m,\la}\|^2 d\la \ge
\sum_{j=1}^{N_m} \|T^j x_m\|^2\Bigl(\sum_{k=1}^{j}\frac{1}{\|T^kx_m\|}\Bigr)^2
\ge
\sum_{j=4N_m'}^{N_m-1} \|T^j x_m\|^2\Bigl(\sum_{k=2N_m'}^{4N_m'-1}\frac{1}{\|T^kx_m\|}\Bigr)^2
$$
$$
\ge
\sum_{j=4N_m'}^{N_m-1} \|T^j x_m\|^2\Bigl(\frac{N_m'}{A}\Bigr)^2.
$$
Moreover, for $4N_m'\le j< N_m$ we have
$$
KN_m^{1/2}<
\|T^{N_m}x_m\|\le
\|T^{N_m-j}\|\cdot\|T^jx_m\|\le
2K(N_m-j)^{1/2}\|T^jx_m\|.
$$
So
$$
\sum_{j=4N_m'}^{N_m} \|T^j x_m\|^2\ge
\sum_{j=4N_m'}^{N_m-1} \frac{N_m}{4(N_m-j)}\ge
\frac{N_m}{4}\sum_{j=1}^{2N_m'}\frac{1}{j}\ge
\frac{N_m\ln{(2N_m')}}{4}.
$$
Hence
$$
\int\|u_{m,\la}\|^2 d\la\ge
{\rm const} \cdot N_m^3 \ln{(2N_m')},
$$
a contradiction.
\end{proof}

The following picture summarizes the implications between the properties studied here and the behaviour of $\|T^n\|$.

\begin{center}
\begin{figure}[h]
\centering
\begin{tikzpicture}[scale=0.3]
    \node at (20,10) {absolutely Ces\`{a}ro bounded};
  \node at (0,10) {Uniformly Kreiss bounded};
    \node at (0,0) {\mbox {$\|T^n\|= o(n)$}};
    \node at (14,0) {\mbox {$\|T^n\|= o(n)$}};
    \node at (26,0) {\mbox {$\|T^n\|= o(n^{1/2})$}};
     \draw[double, <-] (7.9,10) -- (12,10);
  \draw[double, ->] (0,7) -- (0,2);
     \draw[double, ->] (18,7) -- (14,2);
    \draw[double, ->] (22,7) -- (26,2);
      \node[left] at (0,4.5) {Hilbert space};
        \node[left] at (16,4.5) {Banach space };
          \node[right] at (24,4.5) {Hilbert space};
     \end{tikzpicture}
\caption{Behavior of $\| T^n\|$ for uniformly Kreiss and Cesàro bounded operators. }
\end{figure}
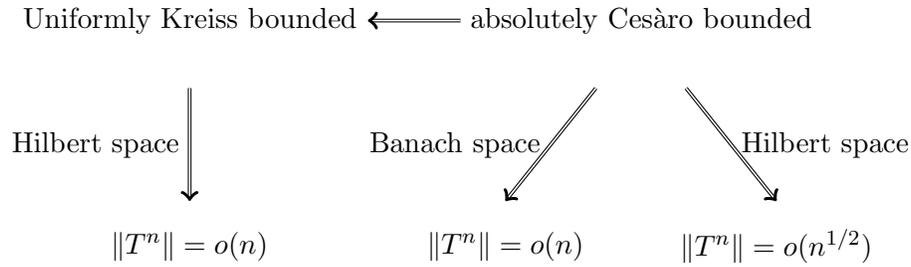
\end{center}

We finish this section with a couple of questions.

\begin{question}
Are there    absolutely Ces\`{a}ro bounded operators on Hilbert  spaces which are not strongly Kreiss bounded?
\end{question}

\begin{question}
Are  there  strongly Kreiss bounded operators which are not  absolutely Ces\`{a}ro bounded?
\end{question}

\section{Ergodic properties for  $m$-isometries}

The following implications for operators on reflexive Banach spaces among various concepts in ergodic theory are a direct consequence of the corresponding definitions:

\begin{center}
\begin{figure}[h]\label{figure2}
\centering
\begin{tikzpicture}[scale=0.3]
  \node at (5.7,0) {Power bounded};
 \node at (16.5,0) {Mean ergodic};
 \node at (16.5,-5) {$\left\| \frac{T^nx}{n}\right\| \to 0 \;\;\; \forall x\in H$};
  \node at (26.5,0) {Weakly ergodic};
   \node at (37,0) {Cesàro bounded};
\draw[double, ->] (11,0) -- (12.5,0);
\draw[double, ->] (20.5,0) -- (22,0);
\draw[double, ->] (31,0) -- (32.5,0);
\draw[double, ->] (16.5,-1) -- (16.5,-3.5);
\end{tikzpicture}
\caption{Behavior between different definitions in ergodic theory.}
\end{figure}
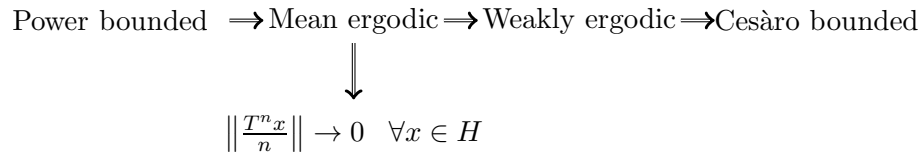
\end{center}

In general, the converse implications of the above  figure  are not true.

\ \par

The purpose of this section is to study $m$-isometries within the framework of these definitions. It is clear that isometries (1-isometries)  are power bounded. It is natural to ask about strict $m$-isometries and the definitions of Figure \ref{figure2} on finite or infinite Hilbert spaces.

The following example is due to Assani. See \cite[page 10]{E} and \cite[Theorem 5.4]{AS} for more details.

\begin{example}\label{ejemplo1}
{\rm Let $H$ be $\RR^2$ or $\CC^2$ and $T=\left(
                                            \begin{array}{cc}
                                              -1 & 2 \\
                                              0 & -1 \\
                                            \end{array}
                                          \right) $.
It is clear that
$$
T^n= \left(
\begin{array}{cc}
 (-1)^n& (-1)^{n-1}2n  \\
0 & (-1)^n  \\
\end{array}
\right) \;
$$
and $\sup_{n\in \NN} \| M_n(T)\| <\infty $. Then $T$ is Cesàro bounded and $\frac{\|T^nx\|}{n}$ does not converge to 0 for some $x\in H$. Hence $T$ is not mean ergodic. Note that $T$ is a strict 3-isometry.
}
\end{example}

The above  example shows that on a 2-dimensional Hilbert space there exists a 3-isometry which is Cesàro bounded and not mean ergodic. This example could be generalized to any Hilbert space of dimension greater or equal to 2.

Let  $H$ be   a  Hilbert space and $T\in B(H)$. Tomilov and Zemánek in \cite{TZ} considered the  Hilbert space ${\cal H}= H\oplus H$ with the norm
$$
\|x_1 \oplus x_2\|_{H\oplus H} = \sqrt{\|x_1\|^2+\|x_2\|^2} \;,
$$
and  the bounded linear operator ${\cal T}$ on  $\cal H$ given by the matrix
\[
{\cal T}:= \left(
\begin{array}{cc}
 T& T-I  \\
0 & T  \\
\end{array}
\right) \;.
\]

In fact, they obtained the following relations of ergodic properties between  the operators $\mathcal{T}$ and $T$.

\begin{lemma}\label{lema1}\cite[Lemmma 2.1]{TZ}
Let $T\in B(H)$. Then
\begin{enumerate}
\item $\mathcal{T}$ is Cesàro bounded   if and only if $T$ is power bounded.
\item $\mathcal{T}$ is mean ergodic  if and only if  $T^n$ converges in the strong topology of $H$.
\item $\mathcal{T}$ is weakly ergodic  if and only if $T^n$ converges in the weak topology of $H$.
\end{enumerate}
\end{lemma}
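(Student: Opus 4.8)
The plan is to reduce the whole lemma to one explicit computation of the powers and Cesàro means of $\mathcal{T}$, after which each of the three equivalences drops out by comparing the individual entries of the resulting operator matrix in the relevant topology.

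First I would compute $\mathcal{T}^n$. Since $T$ commutes with $T-I$, an induction on $n$ based on $\mathcal{T}^{n+1}=\mathcal{T}^n\mathcal{T}$ (which yields the recurrence $B_{n+1}=T^n(T-I)+B_nT$ for the off-diagonal block $B_n$) gives
$$
\mathcal{T}^n=\begin{pmatrix} T^n & nT^{n-1}(T-I)\\ 0 & T^n\end{pmatrix}.
$$
The key step is then to sum these powers and simplify the off-diagonal block. Writing $nT^{n-1}(T-I)=n(T^n-T^{n-1})$ for $n\ge 1$ and telescoping, I obtain
$$
\sum_{n=1}^N n(T^n-T^{n-1})=NT^N-\sum_{m=0}^{N-1}T^m=N\bigl(T^N-M_{N-1}(T)\bigr),
$$
so that
$$
M_N(\mathcal{T})=\begin{pmatrix} M_N(T) & \frac{N}{N+1}\bigl(T^N-M_{N-1}(T)\bigr)\\[2pt] 0 & M_N(T)\end{pmatrix}.
$$
This identity is the heart of the argument: it ties the single power $T^N$ (which governs power boundedness and strong or weak convergence of the powers) to the Cesàro mean $M_{N-1}(T)$ (which governs Cesàro boundedness and ergodicity). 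I expect establishing and exploiting this clean formula, together with checking that the factor $\frac{N}{N+1}\to 1$ is harmless, to be the only genuine obstacle; everything afterward is routine.

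Next I would record the elementary fact that, for an operator matrix $\left(\begin{smallmatrix} A & B\\ 0 & D\end{smallmatrix}\right)$ acting on $H\oplus H$ with the Euclidean norm, its operator norm is comparable to $\max(\|A\|,\|B\|,\|D\|)$, and it converges in the strong (resp. weak) operator topology if and only if each of its three entries does. This follows by testing on vectors of the form $x\oplus 0$ and $0\oplus y$ and using that norm (resp. weak) convergence of a pair $a_N\oplus b_N$ is equivalent to that of $a_N$ and $b_N$ separately. Applying this to $M_N(\mathcal{T})$: for part (1), boundedness of $M_N(\mathcal{T})$ forces both $\sup_N\|M_N(T)\|<\infty$ and $\sup_N\|T^N-M_{N-1}(T)\|<\infty$, and these together give $\sup_N\|T^N\|<\infty$; conversely, power boundedness of $T$ makes all three entries bounded, so $\mathcal{T}$ is Cesàro bounded.

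For parts (2) and (3), convergence of $M_N(\mathcal{T})$ in the strong (resp. weak) topology is, by the entrywise criterion, equivalent to convergence of $M_N(T)$ in that topology together with convergence of the off-diagonal term $\frac{N}{N+1}(T^N-M_{N-1}(T))$. If the powers $T^n$ converge (strongly, resp. weakly) to some $Q$, then $M_{N-1}(T)\to Q$ in the same topology (Cesàro means of a convergent sequence share its limit) and $\frac{N}{N+1}\to 1$, so the off-diagonal term tends to $0$ and $\mathcal{T}$ is ergodic. Conversely, if $M_N(\mathcal{T})$ converges then $M_{N-1}(T)$ converges, and since the off-diagonal term also converges, subtracting shows that $\frac{N}{N+1}T^N$, hence $T^N$, converges in the given topology. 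Thus in each case $\mathcal{T}$ is mean (resp. weakly) ergodic precisely when $T^n$ converges strongly (resp. weakly), which completes the proof.
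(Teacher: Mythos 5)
Your proof is correct, and in fact there is nothing in the paper to compare it against: the paper states this lemma as a quotation of \cite[Lemma 2.1]{TZ} and gives no proof of its own, so your argument serves as a self-contained verification. I checked the two computations on which everything rests. The induction indeed gives
$$
\mathcal{T}^n=\begin{pmatrix} T^n & nT^{n-1}(T-I)\\ 0 & T^n\end{pmatrix},
$$
and the telescoping identity
$$
\sum_{n=1}^N n(T^n-T^{n-1})=NT^N-\sum_{m=0}^{N-1}T^m=N\bigl(T^N-M_{N-1}(T)\bigr)
$$
is right, so the off-diagonal block of $M_N(\mathcal{T})$ is $\frac{N}{N+1}\bigl(T^N-M_{N-1}(T)\bigr)$ as you claim. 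The entrywise criteria you invoke for an upper-triangular matrix over $H\oplus H$ (norm comparable to the maximum of the entry norms; strong/weak convergence equivalent to entrywise strong/weak convergence, by testing on $x\oplus 0$ and $0\oplus y$) are valid, and both directions of each equivalence then go through exactly as you describe, including the subtraction argument recovering convergence of $T^N$ from convergence of $M_{N-1}(T)$ and of the off-diagonal term. One small remark: combining your formula with the paper's identity (\ref{media}) shows that the off-diagonal block of $M_N(\mathcal{T})$ equals $T^N-M_N(T)$ exactly, which removes the factor $\frac{N}{N+1}$ and makes the three equivalences read off even more directly; but your handling of that factor is correct, so this is only a cosmetic simplification.
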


Recall some properties of $m$-isometries.

\begin{lemma}\label{lema2}
Let $T\in B(H)$ and $m\in \NN$. Then
\begin{enumerate}
\item \cite[Theorem 2.1]{BMNe} $T$ is a strict $m$-isometry if and only if  $\| T^nx\|^2$ is a polynomial at $n$ of degree less or equal to $m-1$ for all $x\in H$, and there exists $x_m\in H$ such that $\| T^nx_m\|^2 $ is a polynomial of degree exactly $m-1$.
\item \cite[Theorem 2.7]{BMNo} If $H$ is a finite dimensional Hilbert space, then $T$ is a strict $m$-isometry with odd $m$ if and only if  there exist a unitary $U\in B(H)$ and a nilpotent operator $Q\in B(H)$ of order $\frac{m+1}{2}$ such that $UQ=QU$ with $T=U+Q$.
\item \cite[Theorem 2.2]{BMNo} If $A\in B(H)$ is an isometry and $Q\in B(H)$ is a nilpotent operator of order $n$ such that commutes with $A$, then $A+Q$ is a strict $(2n-1)$-isometry.
\end{enumerate}
\end{lemma}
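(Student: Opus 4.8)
The plan is to treat the three parts in a dependent order, using part~1 as the engine for parts~2 and~3. For part~1, the natural tool is the calculus of finite differences. Writing $\beta_x(n):=\|T^nx\|^2$ and letting $\Delta$ denote the forward difference operator $(\Delta f)(n)=f(n+1)-f(n)$, one has the identity $(\Delta^m f)(n)=\sum_{k=0}^m(-1)^{m-k}\binom{m}{k}f(n+k)$. Thus the $m$-isometry condition is precisely $(\Delta^m\beta_x)(0)=0$. Applying the same condition to the vector $T^nx$ in place of $x$ gives $(\Delta^m\beta_x)(n)=0$ for every $n\ge 0$. Invoking the classical fact that a sequence is annihilated by $\Delta^m$ if and only if it agrees with a polynomial of degree at most $m-1$, we conclude that $T$ is an $m$-isometry exactly when each $\beta_x$ is such a polynomial. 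Strictness then corresponds to the failure of the $(m-1)$-isometry identity, i.e.\ to the existence of some $x_m$ for which $\beta_{x_m}$ has degree exactly $m-1$.

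For part~3 I would argue by a direct computation feeding into part~1. Since $A$ and $Q$ commute and $Q^n=0$, the binomial expansion truncates: $T^k=(A+Q)^k=\sum_{j=0}^{\min\{k,\,n-1\}}\binom{k}{j}A^{k-j}Q^j$. Expanding $\|T^kx\|^2$ gives a double sum of terms $\binom{k}{i}\binom{k}{j}\langle A^{k-i}Q^ix,A^{k-j}Q^jx\rangle$. The key point is that $A$ is an isometry, so $A^{*}A=I$ lets one cancel the common powers of $A$: for $i\le j$ one finds $\langle A^{k-i}Q^ix,A^{k-j}Q^jx\rangle=\langle A^{j-i}Q^ix,Q^jx\rangle$, which is \emph{independent of $k$}. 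Hence $\|T^kx\|^2$ is a polynomial in $k$, and its degree is at most $2(n-1)=(2n-1)-1$ because the highest-order factor $\binom{k}{i}\binom{k}{j}$ occurs only at $i=j=n-1$. The leading coefficient is a positive multiple of $\|Q^{n-1}x\|^2$, which is nonzero for a suitable $x$ since $Q$ has nilpotency order exactly $n$. By part~1 this makes $T$ a strict $(2n-1)$-isometry.

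Part~2 splits into an easy and a hard direction. The ``if'' direction is immediate from part~3: a unitary $U$ is in particular an isometry, so taking $n=\frac{m+1}{2}$ turns $T=U+Q$ into a strict $(2n-1)=m$-isometry. The ``only if'' direction is where I expect the real difficulty. Starting from a strict $m$-isometry $T$ on a finite-dimensional $H$, part~1 shows that every orbit $\|T^nx\|^2$ grows only polynomially; since an eigenvector attached to an eigenvalue $\lambda$ with $|\lambda|\ne 1$ would force exponential growth or decay of its orbit (incompatible with a nonzero polynomial), the spectrum must lie on the unit circle. The remaining task is to upgrade this spectral information to a genuine decomposition $T=U+Q$ with $U$ unitary, $Q$ nilpotent of order exactly $\frac{m+1}{2}$, and $UQ=QU$. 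The delicate issue is that the $m$-isometry property is metric rather than merely a statement up to similarity, so the generalized-eigenspace (Jordan) decomposition must be shown to be compatible with the inner product, and the oddness of $m$ must be used to pin down the precise nilpotency order. This structural analysis is the main obstacle, and it is exactly the content carried out in \cite{BMNo}.
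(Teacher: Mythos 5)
The paper offers no proof of this lemma: all three parts are quoted from the literature (\cite[Theorem 2.1]{BMNe} for part 1, \cite[Theorems 2.2 and 2.7]{BMNo} for parts 3 and 2), so there is no internal argument to compare yours against. On their own merits, your proofs of parts 1 and 3 are correct and complete. In part 1, the identification of the $m$-isometry identity with $(\Delta^m\beta_x)(0)=0$, the shift trick $\beta_{T^nx}(k)=\beta_x(n+k)$ giving $\Delta^m\beta_x\equiv 0$, and the fact that $\ker\Delta^m$ consists exactly of the polynomial sequences of degree at most $m-1$ are all sound; strictness does reduce to $(\Delta^{m-1}\beta_{x_m})(0)=(m-1)!\cdot(\text{leading coefficient})\neq 0$. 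In part 3, the coefficients $\langle A^{j-i}Q^ix,Q^jx\rangle$ are indeed $k$-independent because powers of an isometry preserve inner products, the identity $\|T^kx\|^2=\sum_{i,j=0}^{n-1}\binom{k}{i}\binom{k}{j}c_{ij}$ holds for every $k\ge 0$ (the binomial coefficients vanish when $i>k$ or $j>k$), and the coefficient of $k^{2n-2}$ is $\|Q^{n-1}x\|^2/((n-1)!)^2$, which is nonzero for suitable $x$ since $Q^{n-1}\neq 0$. The ``if'' half of part 2 then follows from part 3 as you say.

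The genuine gap is the ``only if'' half of part 2, which you name but do not close. Your spectral step is fine (an eigenvalue off the unit circle gives an orbit that grows exponentially, or decays to $0$ while being nonzero at $n=0$, neither compatible with polynomial behaviour), but the substance of the theorem lies beyond it: writing $T=D+N$ (Jordan--Chevalley decomposition), one must prove that $D$ is unitary \emph{in the given inner product} --- equivalently, that the generalized eigenspaces of $T$ are mutually orthogonal --- and this is precisely where the $m$-isometry hypothesis and the parity of $m$ must enter. Note that once a commuting decomposition $T=U+Q$ with $U$ unitary is obtained, the nilpotency order $\frac{m+1}{2}$ comes for free from your part 3, since a strict $(2\nu-1)$-isometry can equal a strict $m$-isometry only if $m=2\nu-1$; so the unitarity is the entire difficulty. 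Deferring it with ``this is exactly the content of \cite{BMNo}'' is a citation, not a proof --- though in fairness this leaves your write-up at exactly the same level of completeness as the paper, which also states part 2 only with that citation.
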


\begin{example}\label{ejemplo2}
{\rm Let  $H$ be a  Hilbert space and $T\in B(H)$ such that $T=I+Q$ where $Q^n=0$ for some $n\geq 2$ and $Q^{n-1}\neq 0$. Define the  Hilbert space ${\cal H}$ and  the bounded linear operator ${\cal T}$ on  $\cal H$  as above.
By construction  $\mathcal{T}= A+\mathcal{Q}$ where
$$
A:=\left(
    \begin{array}{cc}
      I & 0 \\
      0 & I \\
    \end{array}
  \right) \;, \;\;\;\;\; \mathcal{Q}:=\left(
                              \begin{array}{cc}
                                Q & Q \\
                                0 & Q \\
                              \end{array}
                            \right)
$$
where $\mathcal{Q}^n=0$ and $\mathcal{Q}^{n-1} \neq 0$. By parts (3) and (1) of Lemma \ref{lema2}, $T$ is a strict $(2n-1)$-isometry and hence not power bounded. Thus, by Lemma \ref{lema1} we have that $\mathcal{T}$ is not Cesàro bounded. It is also simple to verify that $\mathcal{T}$ is strict $(2n-1)$- isometry by Lemma \ref{lema2}.
}
\end{example}

\begin{example}\label{ejemplo3}
{\rm Let $\lambda $ be a unimodular complex number different from 1. Then
$$
{\cal A}:= \left(
\begin{array}{lc}
 \la& \la-1  \\
0 & \la \\
\end{array}
\right)
$$
is a Cesàro bounded operator  (since $\sup_n|\la^n|<\infty$), it is not mean ergodic (since $\la^nx$ does not converge) and is a $3$-isometry on $\CC^2$, see Lemmas \ref{lema1} and \ref{lema2}.
}
\end{example}

Now we give some ergodic properties of $m$-isometries.

Example \ref{ejemplo1} is   a Ces\`{a}ro bounded  $3$-isometries. However, as a consequence of Theorem \ref{kreiss} and Lemma \ref{lema2}, we obtain the following.

\begin{corollary}
There is no  uniformly Kreiss bounded strict $3$-isometry.
\end{corollary}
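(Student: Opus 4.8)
The plan is to derive a contradiction from Theorem~\ref{kreiss}, which forces $n^{-1}\|T^n\|\to 0$ for any uniformly Kreiss bounded operator on a Hilbert space, together with the polynomial growth description of strict $3$-isometries in part (1) of Lemma~\ref{lema2}. The whole argument is a short quantitative lower bound on $\|T^n\|$ that is incompatible with uniform Kreiss boundedness.

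First I would suppose, for contradiction, that $T$ is a strict $3$-isometry on a Hilbert space $H$ that is uniformly Kreiss bounded. By part (1) of Lemma~\ref{lema2} there is a vector $x_3\in H$ for which $n\mapsto\|T^nx_3\|^2$ is a polynomial in $n$ of degree exactly $2$, say $\|T^nx_3\|^2=an^2+bn+c$. The key elementary observation is that the leading coefficient $a$ must be strictly positive: since $\|T^nx_3\|^2\ge 0$ for every $n$ and the polynomial has degree $2$, its leading term governs the behaviour as $n\to\infty$, so $a\le 0$ would force negative values for large $n$. Hence $a>0$ and $\|T^nx_3\|=\sqrt{an^2+bn+c}\sim\sqrt{a}\,n$ as $n\to\infty$.

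From this asymptotic I would bound $n^{-1}\|T^n\|\ge n^{-1}\|T^nx_3\|/\|x_3\|$, whose limit is $\sqrt{a}/\|x_3\|>0$. Thus $n^{-1}\|T^n\|$ does not tend to $0$, contradicting the conclusion of Theorem~\ref{kreiss} applied to the uniformly Kreiss bounded operator $T$. This contradiction shows that no uniformly Kreiss bounded operator can be a strict $3$-isometry.

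I do not anticipate a genuine obstacle here: the only point requiring a word of care is the positivity of the leading coefficient $a$, which is immediate from nonnegativity of $\|T^nx_3\|^2$, and the passage from the growth of a single orbit to a lower bound on the operator norms $\|T^n\|$. Everything else is a direct citation of Theorem~\ref{kreiss} and Lemma~\ref{lema2}, so the proof should be only a few lines.
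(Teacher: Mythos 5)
Your proposal is correct and follows exactly the paper's intended argument: the paper derives this corollary as a direct consequence of Theorem~\ref{kreiss} and part (1) of Lemma~\ref{lema2}, precisely the combination you use. Your write-up merely fills in the (easy) details of that deduction, including the positivity of the leading coefficient, so nothing further is needed.
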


\begin{theorem}\label{Cesarobounded}
 Assume that $H$ is a finite $n$-dimensional Hilbert space. Then
\begin{enumerate}
\item If $n\geq 2$, then there exists a Cesáro bounded strict 3-isometry.
\item The isometries are the only mean ergodic strict $m$-isometries on $H$.
\end{enumerate}
\end{theorem}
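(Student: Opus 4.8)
The plan is to treat the two parts separately, reusing the structural tools already in place.

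\emph{Part (1).} To produce a Cesàro bounded strict $3$-isometry in every dimension $n\geq 2$, I would start from the $2\times 2$ matrix $T_0=\left(\begin{smallmatrix}-1&2\\0&-1\end{smallmatrix}\right)$ of Example \ref{ejemplo1}, which is simultaneously Cesàro bounded and a strict $3$-isometry on $\CC^2$, and inflate it by an isometry on the remaining coordinates. Concretely, set $T=T_0\oplus I_{n-2}$ on $\CC^n=\CC^2\oplus\CC^{n-2}$. The Cesàro means split as $M_N(T)=M_N(T_0)\oplus I$, and since both summands are uniformly bounded, $T$ is Cesàro bounded. For the isometry order I would use the polynomial characterization of Lemma \ref{lema2}(1): for $x\oplus y\in\CC^2\oplus\CC^{n-2}$ one has $\|T^k(x\oplus y)\|^2=\|T_0^k x\|^2+\|y\|^2$, and since $\|T_0^k x\|^2$ is a polynomial in $k$ of degree at most $2$ attaining degree exactly $2$ for suitable $x$, so is $\|T^k(x\oplus y)\|^2$; hence $T$ is a strict $3$-isometry.

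\emph{Part (2).} One inclusion is immediate: on a finite-dimensional space an isometry is injective, hence unitary, hence power bounded, and therefore mean ergodic (see Figure \ref{figure2}). For the converse I would argue by contradiction. Suppose $T$ is a mean ergodic strict $m$-isometry with $m\geq 2$. Since there are no strict $m$-isometries of even order on finite-dimensional spaces (see \cite[Proposition 1.23]{AgS}), $m$ must be odd, so $m\geq 3$. By Lemma \ref{lema2}(1) there is a vector $x_m$ for which $\|T^k x_m\|^2$ is a polynomial in $k$ of degree exactly $m-1\geq 2$; its leading coefficient is necessarily positive, because this quantity is nonnegative and unbounded. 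Consequently $\|T^k x_m\|\sim\beta\,k^{(m-1)/2}$ with $(m-1)/2\geq 1$, so that $k^{-1}\|T^k x_m\|\sim\beta\,k^{(m-3)/2}$ does not tend to $0$. This contradicts the implication, recorded in Figure \ref{figure2} and obtained by applying \eqref{media} to $x_m$, that mean ergodicity forces $k^{-1}\|T^k x\|\to 0$ for every $x$. Hence $m=1$ and $T$ is an isometry.

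The routine checks (that $T_0$ has the two stated properties, and that a direct sum of Cesàro bounded operators is Cesàro bounded) are straightforward. The one point deserving care is the sign of the leading coefficient in Part (2): one must observe that a polynomial equal to $\|T^k x_m\|^2\geq 0$ of positive even degree has positive leading coefficient, since this is exactly what guarantees genuine polynomial growth of $\|T^k x_m\|$ and thereby the failure of the ergodic decay rate $k^{-1}\|T^k x_m\|\to 0$.
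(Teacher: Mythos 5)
Your proposal is correct and takes essentially the same route as the paper: for part (1) a $2\times 2$ Cesàro bounded strict $3$-isometry direct-summed with the identity on $\CC^{n-2}$, and for part (2) the exclusion of even $m$ in finite dimensions followed by the degree-$(m-1)$ polynomial growth of $\|T^kx_m\|^2$ from Lemma \ref{lema2}(1), which contradicts the decay $k^{-1}\|T^kx\|\to 0$ forced by mean ergodicity via \eqref{media}. The only cosmetic difference is that the paper builds part (1) from the block $\bigl(\begin{smallmatrix}\lambda&\lambda-1\\0&\lambda\end{smallmatrix}\bigr)$ of Example \ref{ejemplo3} rather than from Assani's matrix in Example \ref{ejemplo1}; the two choices are interchangeable.
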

\begin{proof}
{\it (1)}
Let
$$
{\cal A}:= \left(
\begin{array}{lc}
 \la& \la-1  \\
0 & \la \\
\end{array}
\right)
$$
be the operator  on $\CC^2$ considered in Example \ref{ejemplo3}.
Write $H=\CC^2\oplus\CC^{n-2}$ and let ${\cal B}: = {\cal A}\oplus I_{\CC^{n-2}}$. Then $\cal B$ is a strict $3$-isometry which is Ces\`{a}ro bounded (and not power bounded).

{\it (2)} Suppose that $T$ is a strict $m$-isometry with $m>1$ on a finite dimensional Hilbert space, then $m\ge 3$. Using part (1) of Lemma \ref{lema2}, it is easy to prove  that $\frac{\|T^nx\|}{n}$ does not converges to 0 for some  $x\in H$. So, $T$ is not mean ergodic.
\end{proof}

In infinite dimensional Hilbert space we can say more.

\begin{theorem} Let $T$ be a strict $m$-isometry. Then
 \begin{enumerate}
\item  If $m>3$, then $T$ is not Cesàro bounded. In particular there is no weakly ergodic  strict $m$-isometry for $m>3$.
\item If $m\geq 3$, then $T$ is not mean ergodic.
\end{enumerate}
\end{theorem}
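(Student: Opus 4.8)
The plan is to reduce both parts to the polynomial growth characterisation of strict $m$-isometries recorded in part (1) of Lemma \ref{lema2}, together with the elementary fact that Cesàro boundedness forces $\|T^n\|=O(n)$ via identity (\ref{media}). By Lemma \ref{lema2}(1) there is a vector $x_m\in H$ for which $\|T^nx_m\|^2$ is a polynomial in $n$ of degree exactly $m-1$. Since $\|T^nx_m\|^2\ge 0$ for every $n$, its leading coefficient must be strictly positive, so there is a constant $c>0$ with
$$
\|T^nx_m\|^2 = c\,n^{m-1}\bigl(1+o(1)\bigr)\qquad(n\to\infty),
$$
whence $\|T^nx_m\|\sim\sqrt{c}\,n^{(m-1)/2}$. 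As $x_m\neq 0$ (a polynomial of degree $m-1\ge 2$ cannot vanish identically, so in particular $\|x_m\|^2=\|T^0x_m\|^2\neq 0$), this also yields $\|T^n\|\ge \|T^nx_m\|/\|x_m\|$, a quantity asymptotic to $c'n^{(m-1)/2}$ with $c'=\sqrt{c}/\|x_m\|>0$. This single growth estimate drives everything.

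For part (1) I would assume $m>3$, so that $(m-1)/2>1$ and the lower bound above grows strictly faster than linearly. On the other hand, were $T$ Cesàro bounded the means $(M_n(T))_n$ would be bounded, say by $B$, and identity (\ref{media}) would give $\|T^n\|\le (n+1)\bigl(\|M_n(T)\|+\|M_{n-1}(T)\|\bigr)\le 2B(n+1)=O(n)$, contradicting the super-linear lower bound. Hence $T$ is not Cesàro bounded. The ``in particular'' clause is then immediate: every weakly ergodic operator is Cesàro bounded (see the implications recalled in Figure \ref{figure2}), so no strict $m$-isometry with $m>3$ can be weakly ergodic.

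For part (2) I would assume $m\ge 3$, so that $(m-1)/2\ge 1$ and therefore
$$
\frac{\|T^nx_m\|}{n}\sim \sqrt{c}\,n^{(m-1)/2-1},
$$
which tends to the positive constant $\sqrt{c}$ when $m=3$ and to $+\infty$ when $m>3$; in either case it does not tend to $0$. But if $T$ were mean ergodic, then $M_n(T)x_m$ would converge in norm, and passing to the limit in (\ref{media}) would force $\|T^nx_m\|/(n+1)\to 0$, contradicting the previous display. Hence $T$ is not mean ergodic.

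I expect the only delicate point, more a matter of care than a genuine obstacle, to be the boundary case $m=3$ of part (2): there the orbit $\|T^nx_m\|$ grows exactly linearly, so the argument hinges on the leading coefficient $c$ being \emph{strictly} positive, which is precisely what the phrase ``degree exactly $m-1$'' in Lemma \ref{lema2}(1) supplies. Everything else is a routine comparison of growth rates, the two parts differing only in whether one compares the orbit $\|T^nx_m\|$ against $n$ (for mean ergodicity) or compares $\|T^n\|$ against the linear bound coming from Cesàro boundedness.
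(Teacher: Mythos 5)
Your proposal is correct and follows exactly the paper's own (much terser) argument: the paper likewise combines part (1) of Lemma \ref{lema2} with identity (\ref{media}), leaving implicit the growth comparison that you spell out. The details you fill in — positivity of the leading coefficient, the $O(n)$ bound from Cesàro boundedness, and $T^nx/(n+1)\to 0$ under mean ergodicity — are precisely what the paper's one-line proof relies on.
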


\begin{proof}
By part (1) of Lemma \ref{lema2}, there exists $x\in H$ such that $\| T^nx\|^2$ is a polynomial at $n$ of order  $m-1$ exactly. Thus by equation (\ref{media}), the proof is complete.
%
\end{proof}



\begin{theorem}\label{Rachjman}
There exists a Ces\`{a}ro bounded and weakly ergodic strict $3$-isometry.
\end{theorem}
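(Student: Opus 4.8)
The plan is to realize the required operator directly as an isometry plus a commuting square–zero perturbation, and to arrange the isometry so that its powers decay weakly while its spectrum stays away from $1$; finite–dimensional models (Examples \ref{ejemplo1} and \ref{ejemplo3}) cannot work, since there weak and mean ergodicity coincide and strict $3$-isometries are never mean ergodic, so continuous spectrum is forced. Concretely, I would fix an arc $\gamma\subset\T$ with $1\notin\overline{\gamma}$, equip $L^2(\gamma)$ with arclength measure, write $M_z$ for multiplication by the coordinate function (a unitary), and work on $\hat H=L^2(\gamma)\oplus L^2(\gamma)$ with
\[
S=\begin{pmatrix} M_z & I\\ 0 & M_z\end{pmatrix}=A+Q,\qquad A=\begin{pmatrix} M_z&0\\0&M_z\end{pmatrix},\quad Q=\begin{pmatrix}0&I\\0&0\end{pmatrix}.
\]
Here $A$ is unitary, $Q\neq 0$, $Q^2=0$ and $AQ=QA$, so by part (3) of Lemma \ref{lema2} (with nilpotency order $2$) $S$ is a strict $(2\cdot 2-1)=3$-isometry.

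Since $A$ and $Q$ commute and $Q^2=0$, the binomial theorem collapses to two terms,
\[
S^n=A^n+nA^{n-1}Q=\begin{pmatrix} M_z^n & nM_z^{n-1}\\ 0 & M_z^n\end{pmatrix},
\]
so the Cesàro means take the form $M_N(S)=\begin{pmatrix} M_N(M_z) & R_N\\ 0 & M_N(M_z)\end{pmatrix}$ with $R_N=\frac{1}{N+1}\sum_{n=1}^{N} n\,M_z^{n-1}$. Everything then reduces to controlling the scalar symbols. For the diagonal, $M_N(M_z)$ is multiplication by $\frac{1}{N+1}\frac{z^{N+1}-1}{z-1}$, which tends to $0$ uniformly because $|z-1|$ is bounded below on $\gamma$; hence $M_N(M_z)\to 0$ in operator norm. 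For the off–diagonal, differentiating the geometric sum gives that $R_N$ is multiplication by $\frac{(N+1)z^N(z-1)-(z^{N+1}-1)}{(N+1)(z-1)^2}$, whose numerator is $O(N)$ and whose denominator is comparable to $N$ on $\gamma$; thus $\sup_N\|R_N\|<\infty$. Combining the two estimates gives $\sup_N\|M_N(S)\|<\infty$, i.e.\ $S$ is Cesàro bounded.

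For weak ergodicity I would isolate the dominant part by rewriting
\[
R_N=(A-I)^{-1}A^N-\frac{(A-I)^{-2}\,(A^{N+1}-I)}{N+1},
\]
where $(A-I)^{-1}$ is a genuine bounded operator because $1\notin\sigma(A)$. The second summand is $O(1/N)$ in norm, while $A^N\to 0$ in the weak operator topology by the Riemann–Lebesgue lemma, the spectral measure of $M_z$ being arclength and hence non-atomic. Composing weak-operator convergence with the fixed bounded operators $(A-I)^{-1}$ on the left and $Q$ on the right yields $R_NQ\to 0$ weakly, and together with $M_N(M_z)\to 0$ in norm this gives $M_N(S)\to 0$ in the weak operator topology; in particular $M_N(S)x\to 0$ weakly for every $x$, so $S$ is weakly ergodic. (Consistently with the preceding theorem, $R_Nf$ does not converge in norm, so $S$ is weakly but not mean ergodic.)

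The main obstacle is that the single block $R_N$ must be \emph{simultaneously} uniformly bounded and weakly null, and these two requirements pull in opposite directions: boundedness of $R_N$ rules out any spectral mass near $1$ (on the full circle $\|R_N\|$ would grow like $N$), whereas weak nullity of $A^N$ demands a non-atomic spectral measure. Choosing $M_z$ on an arc of $\T$ avoiding $1$, with arclength measure, is precisely what reconciles the two, and verifying the uniform bound $\sup_N\|R_N\|<\infty$ via the closed-form symbol is the quantitative heart of the proof.
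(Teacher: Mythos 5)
Your proof is correct, and it reaches the result by a genuinely different verification than the paper's. Both constructions share the same algebraic skeleton --- a $2\times 2$ upper triangular block operator, unitary diagonal plus a commuting square-zero nilpotent, with part (3) of Lemma \ref{lema2} giving the strict $3$-isometry --- but the paper takes the bilateral shift $U$ and the Tomilov--Zem\'anek form $\mathcal{M}=\left(\begin{smallmatrix} U & U-I\\ 0 & U\end{smallmatrix}\right)$, after which both ergodic properties are immediate from Lemma \ref{lema1}: $\mathcal{M}$ is Ces\`aro bounded because $U$ is power bounded, and weakly ergodic because $U^n\to 0$ in the weak operator topology. Your off-diagonal entry is $I$ rather than $T-I$, so Lemma \ref{lema1} does not apply to your $S$; you compensate by putting the spectrum of the unitary on an arc avoiding $1$ (so that $(M_z-I)^{-1}$ is bounded) and estimating the Ces\`aro means by hand via the closed-form symbols --- precisely the quantitative work that the paper's citation of Lemma \ref{lema1} hides. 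Two remarks. First, your $S$ is in fact similar to the Tomilov--Zem\'anek operator built from $M_z$ (conjugate by $\mathrm{diag}(I,\,M_z-I)$), and Ces\`aro boundedness and weak ergodicity are similarity invariants, so you could have skipped the symbol estimates by quoting Lemma \ref{lema1} with $T=M_z$, keeping only your direct check of the strict $3$-isometry property (which is not a similarity invariant). Second, in the weak-nullity step the operative property is absolute continuity of the measures $f\bar g\,|dz|$ with respect to Lebesgue measure, which is what the Riemann--Lebesgue lemma requires; non-atomicity alone (your ``hence non-atomic'') would not suffice, since there exist continuous singular measures on $\mathbb{T}$ whose Fourier coefficients do not vanish. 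What your route buys is a self-contained computation that makes explicit how keeping the spectrum away from $1$ substitutes for the damping factor $T-I$; what the paper's choice buys is brevity, and it also shows the construction survives when $T-I$ is not invertible, as happens for the bilateral shift, whose spectrum is the whole circle.
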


\begin{proof}
Let $U$ be the bilateral shift.
Define
$$
{\cal M} : = \left(
\begin{array}{cc}
 U& U-I  \\
0 & U\\
\end{array}
\right) \;.
$$
First observe that ${\cal M}$ is Ces\`{a}ro bounded, by part (1) of Lemma \ref{lema1}. Since $U^n\to 0$ in the weak operator topology, $\cal M$ is weakly ergodic by part (3) of Lemma \ref{lema1}. Therefore, the conclusion is derived by part (3) of Lemma \ref{lema2}.
\end{proof}

In \cite{AS}, it is given an example  of  a Ces\`{a}ro bounded strict $3$-isometry $T$ on a  Hilbert space $H$ for which the sequence  $\left(\displaystyle \frac{ T^n}{n}\right)_{n\in \NN}$ is bounded below for all $x\in H \setminus \{0\}$. In particular, $ \left( M_n(T)x\right)_{n\in \NN}$ diverges for each $x\in H \setminus \{0\}$, and  it is weakly ergodic.


We give a characterization of this property.

Given an $m$-isometry $T$,  the \emph{covariance operator} of $T$ is defined by
$$
\Delta_T: = \frac{1}{k!} \sum _{j=0} ^m  (-1)^{m-j} \binom  {m} {j} {T^*}^j T^j\; .
$$

\begin{theorem}
  Let $T$  be  a  strict $3$-isometry   on a Hilbert space $H$.  Then  the sequence  $\left(\displaystyle \frac{ T^nx}{n}\right)_{n\in \NN}$ is bounded below for all $x\in H \setminus \{0\}$ if and only if the covariance operator $\Delta_T$ is injective.
\end{theorem}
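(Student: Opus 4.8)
The plan is to reduce the whole statement to the behaviour of the single scalar $\langle \Delta_T x, x\rangle$, which I will identify with the leading coefficient of the polynomial $n\mapsto\|T^n x\|^2$. By part (1) of Lemma \ref{lema2}, for each fixed $x$ the function $p_x(n):=\|T^n x\|^2$ is a polynomial in $n$ of degree at most $2$, say $p_x(n)=a(x)n^2+b(x)n+\|x\|^2$. For a quadratic the second forward difference is constant and equals twice the leading coefficient, so
$$
2a(x)=p_x(2)-2p_x(1)+p_x(0)=\|T^2x\|^2-2\|Tx\|^2+\|x\|^2=\big\langle(I-2T^*T+{T^*}^2T^2)x,\,x\big\rangle.
$$
Reading the covariance operator with its normalizing factor $(m-1)!=2$ for $m=3$, the right-hand side is exactly $2\langle \Delta_T x, x\rangle$, so $a(x)=\langle \Delta_T x, x\rangle$. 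Since $p_x(n)\ge 0$ for all $n$ while $p_x(n)\sim a(x)n^2$ as $n\to\infty$, the leading coefficient cannot be negative; hence $\langle \Delta_T x, x\rangle\ge 0$ for every $x$, that is, $\Delta_T\ge 0$.

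Next I would read off the asymptotics. Dividing $p_x$ by $n^2$ gives
$$
\lim_{n\to\infty}\frac{\|T^n x\|}{n}=\langle \Delta_T x, x\rangle^{1/2}=\|\Delta_T^{1/2}x\|.
$$
As $\Delta_T$ is positive, $\langle \Delta_T x, x\rangle=\|\Delta_T^{1/2}x\|^2$, so $\langle \Delta_T x, x\rangle=0$ is equivalent to $\Delta_T^{1/2}x=0$, hence to $\Delta_T x=0$. Therefore $\Delta_T$ is injective if and only if $\langle \Delta_T x, x\rangle>0$ for every $x\neq 0$.

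With this the two implications are short. If $\Delta_T$ is injective and $x\ne 0$, the displayed limit is a strictly positive number $L$. No term of $(\|T^n x\|/n)_n$ vanishes, since $T^{n_0}x=0$ for some $n_0\ge 1$ would force $T^n x=0$ for all $n\ge n_0$, contradicting $p_x(n)\sim L^2 n^2\to\infty$ (and $\|x\|>0$ handles $n=0$). Thus $(\|T^n x\|/n)_n$ is a sequence of strictly positive numbers converging to $L>0$, so its infimum over $n\ge 1$ is positive and $(T^n x/n)_n$ is bounded below. Conversely, if $\Delta_T$ is not injective I would pick $0\ne x\in\ker\Delta_T$; then $\langle \Delta_T x, x\rangle=0$, the limit is $0$, and $(T^n x/n)_n$ is not bounded below for this $x$. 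This establishes the equivalence.

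The forward-difference bookkeeping is routine; the point that needs care is the $(\Leftarrow)$ direction, where \emph{bounded below} refers to the infimum over all $n$ rather than to the limit, so I must exclude the vanishing of any individual $T^n x$ — which the positivity of the leading coefficient together with the ``once zero, always zero'' observation settles.
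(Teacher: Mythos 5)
Your proof is correct, and the underlying idea is the same one the paper relies on: $\langle \Delta_T x,x\rangle$ is the leading coefficient of the quadratic polynomial $n\mapsto \|T^nx\|^2$, hence $\|T^nx\|/n \to \langle \Delta_T x,x\rangle^{1/2}$, and injectivity of the positive operator $\Delta_T$ is equivalent to strict positivity of this limit for every $x\neq 0$. The difference is in how much is proved: the paper's argument is essentially a pair of citations --- the forward implication is delegated to the proof of \cite[Theorem 3.4]{BMM} and the converse to \cite[Proposition 2.3]{BMM} --- whereas you derive everything from part (1) of Lemma \ref{lema2} via second differences. Your version buys three things. First, it pins down the correct normalization of the covariance operator (the paper's displayed definition is garbled: as written, with the sum running to $m$ and coefficients $(-1)^{m-j}\binom{m}{j}$, it vanishes identically for any $m$-isometry, so your reading $\Delta_T=\tfrac{1}{2}\bigl(I-2T^*T+{T^*}^2T^2\bigr)$ for $m=3$ is the right one and matches \cite{BMM}). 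Second, it proves the positivity $\Delta_T\ge 0$ and the equivalence $\langle \Delta_T x,x\rangle=0 \Leftrightarrow \Delta_T x=0$, which the paper uses implicitly. Third, it addresses the one genuinely delicate point, namely that ``bounded below'' concerns $\inf_{n\ge 1}\|T^nx\|/n$ and not merely the limit, so one must rule out the vanishing of an individual $T^{n}x$; your ``once zero, always zero'' observation settles this, while the paper hides it inside the cited proof. Incidentally, the paper's converse contains a typo (``$\inf_n \|T^nx\|/n \rightarrow \langle \Delta_T x,x\rangle=0$'' conflates the infimum with the limit and omits the square root); your clean statement of the limit is what is actually meant.
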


\begin{proof}
If $T$  is a  strict $3$-isometry and   $\Delta_T$ is injective, then
$\displaystyle \inf _{n}\frac{\|T^nx\|}{n} >0 $ for all $x\in H\setminus \{0\}$ (see the  proof of \cite[Theorem 3.4]{BMM}).

If $\Delta_T$ is not injective, then there exists $x$ such that $\langle \Delta_Tx,x \rangle =0$. By  \cite[Proposition 2.3]{BMM}, we have that $\displaystyle \inf _{n}\frac{\|T^nx\|}{n} \rightarrow \langle \Delta_Tx,x \rangle =0$, and thus the sequence  $\displaystyle \frac {T^nx}{n}$ is not bounded below.
\end{proof}

There exist weakly ergodic strict $3$-isometries with the covariance operator $\Delta_T$  injective  by \cite[Section 5.2]{AS} and not injective, see the  proof of Theorem  \ref{Rachjman}.

The Uniform ergodic theorem of Lin \cite[Theorem]{Li} asserts  that if  $\displaystyle \frac{\|T^n\|}{n} \rightarrow 0$, then  $T$ is uniformly ergodic if and only if the range of $I-T$ is closed. On the other hand, $T$ is uniformly ergodic if and only if  $\displaystyle \frac{\|T^n\|}{n} \rightarrow 0$ and 1 is a pole of the  resolvent operator.

\begin{corollary}
For $m>1$, there is no uniform ergodic strict $m$-isometry  on a Hilbert space.
\end{corollary}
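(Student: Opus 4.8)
The plan is to run the argument by contradiction on top of Lin's uniform ergodic theorem, recalled just above the statement. Suppose $T$ is a uniformly ergodic strict $m$-isometry with $m>1$. Lin's theorem gives two pieces of information that I will use: first, uniform ergodicity forces $n^{-1}\|T^n\|\to 0$; second, combined with the first form of the theorem it forces $\operatorname{ran}(I-T)$ to be closed, so that $H=\ker(I-T)\oplus\operatorname{ran}(I-T)$ is a reducing decomposition, $M_n(T)$ converges in operator norm to the projection $P$ onto $\ker(I-T)$, and $T$ acts as the identity on $\ker(I-T)$ while $1\notin\sigma(T_0)$ for $T_0:=T|_{\operatorname{ran}(I-T)}$. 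I would then split into the cases $m\ge 3$ and $m=2$, since $m>1$ and by part (2) of the preceding remark there are no even strict $m$-isometries in finite dimensions anyway.

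For $m\ge 3$ the obstruction is purely the growth rate and the decomposition is not needed. By part (1) of Lemma \ref{lema2} there is a vector $x$ for which $\|T^nx\|^2$ is a polynomial in $n$ of degree exactly $m-1\ge 2$; since this polynomial tends to $+\infty$ its leading coefficient is positive, so $\|T^nx\|$ grows at least like $n^{(m-1)/2}$ with $(m-1)/2\ge 1$. Then $n^{-1}\|T^n\|\ge n^{-1}\|T^nx\|/\|x\|\not\to 0$, contradicting the conclusion $n^{-1}\|T^n\|\to 0$ supplied by Lin's theorem.

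The remaining and genuinely harder case is $m=2$, where $\|T^n\|=O(n^{1/2})$ and the growth argument gives nothing. Here I would use the reducing decomposition above: writing $T\cong I\oplus T_0$ with $1\notin\sigma(T_0)$, the identity $T^n=I\oplus T_0^n$ together with part (1) of Lemma \ref{lema2} shows that the strictness of $T$ forces $T_0$ to be itself a strict $2$-isometry. The crux is to contradict $1\notin\sigma(T_0)$. Using that the approximate point spectrum of any $m$-isometry lies in $\partial\mathbb{D}$ (see \cite{AgS}) and that $r(T_0)=1$, one obtains the dichotomy $\sigma(T_0)=\overline{\mathbb{D}}$ or $\sigma(T_0)\subseteq\partial\mathbb{D}$; as $1\notin\sigma(T_0)$, only the second alternative survives, so $0\notin\sigma(T_0)$ and $T_0$ is invertible. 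Then the inverse of a $2$-isometry is again a $2$-isometry, so both $T_0$ and $T_0^{-1}$ are expansive, which forces $\|T_0x\|=\|x\|$ for all $x$; thus $T_0$ is an isometry, contradicting that it is a strict $2$-isometry. Finally, since $m$-isometries exist only in the Hilbert-space setting, the statement is exactly for Hilbert spaces, as required.

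I expect the main obstacle to be the $m=2$ case, and specifically the two spectral inputs it rests on: the location of the approximate point spectrum of $m$-isometries on the unit circle, and the fact that invertible $2$-isometries are unitary. These lie outside the present excerpt and must be cited from the $m$-isometry literature; by contrast the $m\ge 3$ case is immediate from Lemma \ref{lema2} and Lin's theorem.
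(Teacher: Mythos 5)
Your proof is correct, but it takes a genuinely different route from the paper's, which is essentially a two-line deduction from results already on the table. The paper handles both halves by citation: for $m\ge 3$ it invokes its own earlier theorem that strict $m$-isometries with $m\geq 3$ are never mean ergodic (uniform ergodicity implies mean ergodicity), and for $m=2$ it uses the second form of Lin's theorem quoted just before the statement --- $T$ is uniformly ergodic if and only if $\|T^n\|/n\to 0$ and $1$ is a pole of the resolvent --- together with the known fact that every strict $2$-isometry has $\sigma(T)=\overline{\mathbb{D}}$, so $1$ is not an isolated point of $\sigma(T)$ and hence cannot be a pole. Your $m\ge 3$ growth argument is the same computation that underlies the paper's mean-ergodicity theorem, so no real difference there; the divergence is at $m=2$. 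Instead of citing the full-disk-spectrum fact, you use the closed-range form of Lin's theorem, split $H=\ker(I-T)\oplus\operatorname{ran}(I-T)$, and re-derive the relevant spectral information for the restriction $T_0$ from two more primitive inputs (the approximate point spectrum of an $m$-isometry lies in $\partial\mathbb{D}$, and the inverse of an invertible $2$-isometry is again a $2$-isometry, so invertible $2$-isometries are isometries); in effect you reprove, for the point $\lambda=1$, the spectral statement the paper cites wholesale. What your route buys is self-containedness at the level of elementary inputs from \cite{AgS}, at the cost of length; what the paper's route buys is brevity, by leaning on a stronger quoted fact. Two small points you should tighten: first, the decomposition $H=\ker(I-T)\oplus\operatorname{ran}(I-T)$ is in general not orthogonal, so calling it ``reducing'' is a misnomer --- it is only a direct sum of closed $T$-invariant subspaces (harmless here, since nothing in your argument uses orthogonality); second, the assertion that strictness of $T$ forces $T_0$ to be strict deserves its one-line justification: if $T_0$ were an isometry, then for every $x=x_1+x_2$ with $x_1\in\ker(I-T)$, $x_2\in\operatorname{ran}(I-T)$, one would have $\|T^nx\|=\|x_1+T_0^nx_2\|\le\|x_1\|+\|x_2\|$ bounded in $n$; since by Lemma \ref{lema2}(1) the quantity $\|T^nx\|^2$ is a polynomial in $n$ of degree at most $1$, boundedness makes it constant for every $x$, so $T$ would be an isometry, contradicting strictness.
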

\begin{proof}
Since there is no mean ergodic strict $m$-isometry  for $m\ge 3$, the result follows immediately from the fact that any strict $2$-isometry $T$ satisfies that the spectrum $\sigma(T)=\overline{\mathbb{D}}$ and, thus, 1 is not an isolated point of $\sigma (T)$.
\end{proof}

There exists a strict $3$-isometry  $T$  which  is weakly ergodic   (thus Ces\`{a}ro bounded), but  it is not mean ergodic. For 2-isometries something else can be established.

\begin{corollary} Let $H$ be an infinite dimensional Hilbert space and let $T$ be a strict 2-isometry. Then the following assertions are equivalent:
\begin{enumerate}

\item  $T$ is  mean ergodic.

\item  $T$ is weakly ergodic.

\item  $T$ is  Ces\`{a}ro bounded.
\end{enumerate}

\end{corollary}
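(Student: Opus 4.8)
The plan is to prove the cyclic chain $(1)\Rightarrow(2)\Rightarrow(3)\Rightarrow(1)$. The first two implications are completely general and use nothing about $2$-isometries: if $M_n(T)$ converges strongly it converges weakly, which gives $(1)\Rightarrow(2)$; and if $M_n(T)x$ converges weakly for every $x$, then each orbit $\{M_n(T)x\}_n$ is weakly bounded, hence norm bounded, so by the uniform boundedness principle $\sup_n\|M_n(T)\|<\infty$, which is $(2)\Rightarrow(3)$. These are exactly the generic arrows already recorded in Figure 3, valid for any operator on any Banach space.

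The whole content therefore lies in $(3)\Rightarrow(1)$, and the point is that a $2$-isometry is forced to grow slowly. By part (1) of Lemma \ref{lema2}, for a $2$-isometry the map $n\mapsto\|T^nx\|^2$ is a polynomial of degree at most $1$, so $\|T^nx\|^2=\|x\|^2+n\langle(T^*T-I)x,x\rangle$ with $T^*T-I\ge 0$. Taking the supremum over the unit sphere yields $\|T^n\|^2=1+n\|T^*T-I\|$, that is $\|T^n\|=O(n^{1/2})$, the growth bound already noted for $2$-isometries; in particular $\lim_n n^{-1}\|T^n\|=0$. This is the decisive gain over the $3$-isometry case, where $\|T^n\|$ may genuinely grow like $n$ and Cesàro boundedness fails to force mean ergodicity (cf.\ Example \ref{ejemplo1}).

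It then remains to run the classical mean ergodic argument on the reflexive space $H$. Put $F=\ker(I-T)$ and $N=\overline{\operatorname{ran}(I-T)}$. On $F$ one has $M_n(T)=I$, while for $x=(I-T)y\in\operatorname{ran}(I-T)$ one computes $M_n(T)x=\frac{1}{n+1}\bigl(y-T^{n+1}y\bigr)\to 0$ because $\|T^{n+1}y\|/(n+1)\to 0$; since $\sup_n\|M_n(T)\|<\infty$ by hypothesis $(3)$, this convergence extends to all of $N$, so $M_n(T)$ converges strongly on $F\oplus N$. To finish one shows $F\oplus N=H$. Given $x\in H$, the sequence $\{M_n(T)x\}$ is bounded, so by reflexivity of $H$ some subsequence satisfies $M_{n_k}(T)x\rightharpoonup z$; from $(I-T)M_n(T)x=\frac{1}{n+1}(x-T^{n+1}x)\to 0$ one gets $z\in F$, and from $x-M_n(T)x\in\operatorname{ran}(I-T)\subseteq N$ together with the weak closedness of $N$ one gets $x-z\in N$, hence $x\in F+N$. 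Since $F\cap N=\{0\}$ (on the two pieces the strong limit of $M_n(T)$ is the identity and $0$ respectively), we conclude $H=F\oplus N$, so $M_n(T)$ converges strongly and $T$ is mean ergodic.

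The main obstacle is precisely the splitting $H=F\oplus N$: it is here that reflexivity (weak compactness of bounded sets) and the estimate $n^{-1}\|T^n\|\to 0$ are both indispensable, and it is the failure of the latter for strict $3$-isometries that explains why the analogous equivalence breaks down in degree $3$.
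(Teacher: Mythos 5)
Your proof is correct and takes essentially the same route as the paper: both hinge on part (1) of Lemma \ref{lema2}, which makes $\|T^nx\|^2$ a polynomial in $n$ of degree at most one, hence $\|T^nx\|/n\to 0$ for every $x$, after which Cesàro boundedness plus reflexivity gives mean ergodicity by the classical mean ergodic theorem. The only difference is that you write out in full the standard splitting $H=\ker(I-T)\oplus\overline{\operatorname{ran}(I-T)}$ and the trivial implications $(1)\Rightarrow(2)\Rightarrow(3)$, which the paper leaves implicit.
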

\begin{proof}
It is a consequence of part (1) of Lemma \ref{lema2}, since $\frac{T^nx}{n}$ converges  to zero for all $x\in H$.
\end{proof}

The following example provides a  $2$-isometry that is not Ces\`{a}ro bounded.

\begin{example}
On $\ell^2(\NN)$ we consider  the operator $T$ given by  $T(x_1, x_2,\ldots ): =
(x_1, x_1, x_2,x_3,\ldots )$. Then $T$  is a $2$-isometry which is  not Ces\`{a}ro bounded.
\end{example}

\begin{proposition} \label{Cesaro}
Let $T$ be the weighted backward shift in $\ell^p(\mathbb{N})$ with  $1\le p<\infty$ defined by $Te_1:=0$, $Te_j:=\Bigl(\frac{j}{j-1}\Bigr)^{1/p} e_{j-1}\quad(j>1)$. Then $T$ is not Ces\`{a}ro bounded.
\end{proposition}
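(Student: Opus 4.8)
The plan is to show that the backward shift $T$ with weights $w_j = \bigl(\frac{j}{j-1}\bigr)^{1/p}$ (the borderline case $\alpha = 1/p$ excluded from Theorem~\ref{ejemplos}) fails to satisfy even the weaker Ces\`{a}ro boundedness condition. Since Ces\`{a}ro boundedness of $T$ means $\sup_N \|M_N(T)\| < \infty$, and $M_N(T) = \frac{1}{N+1}\sum_{n=0}^N T^n$, I would exhibit a sequence of unit vectors $x^{(N)} \in \ell^p(\NN)$ for which $\|M_N(T)x^{(N)}\|$ grows without bound in $N$. The key computation is that $T^n e_j = \prod_{k=j-n+1}^{j} w_k \cdot e_{j-n}$ for $n < j$, and the product telescopes: $\prod_{k=j-n+1}^{j} \bigl(\frac{k}{k-1}\bigr)^{1/p} = \bigl(\frac{j}{j-n}\bigr)^{1/p}$. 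Thus $T^n e_j = \bigl(\frac{j}{j-n}\bigr)^{1/p} e_{j-n}$.

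The natural test vector is a single basis element $x^{(N)} = e_N$. Then
$$
M_N(T) e_N = \frac{1}{N+1}\sum_{n=0}^{N-1} \Bigl(\frac{N}{N-n}\Bigr)^{1/p} e_{N-n},
$$
since $T^N e_N = 0$. Because the images $e_{N-n}$ for $n = 0, \dots, N-1$ are distinct basis vectors, the $\ell^p$-norm is computed coordinatewise:
$$
\|M_N(T) e_N\|_p^p = \frac{1}{(N+1)^p}\sum_{n=0}^{N-1}\Bigl(\frac{N}{N-n}\Bigr) = \frac{N}{(N+1)^p}\sum_{m=1}^{N}\frac{1}{m},
$$
after the substitution $m = N-n$. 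The harmonic sum $\sum_{m=1}^N \frac{1}{m}$ behaves like $\ln N$, so $\|M_N(T)e_N\|_p^p \sim \frac{N \ln N}{(N+1)^p}$.

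This estimate settles the case $p = 1$ immediately, since then $\|M_N(T)e_N\|_1 \sim \frac{N\ln N}{N+1} \to \infty$, contradicting Ces\`{a}ro boundedness. For $p > 1$ the single vector $e_N$ is not enough, because $\frac{N\ln N}{(N+1)^p} \to 0$; the main obstacle is that for $p>1$ one must amplify the growth by spreading mass over many high-index coordinates rather than using one basis vector. The remedy I would pursue is to take $x^{(N)} = \sum_{j=N+1}^{2N} c_j e_j$ with a suitable normalized weight profile (for instance constant $c_j$, giving $\|x^{(N)}\|_p = 1$ after scaling by $N^{-1/p}$), and to lower-bound $\|M_N(T)x^{(N)}\|_p$ by tracking the overlap: each coordinate $e_i$ of the image receives contributions $\bigl(\frac{j}{j-(j-i)}\bigr)^{1/p} = \bigl(\frac{j}{i}\bigr)^{1/p}$ from every source $e_j$ with $i < j \le i+N$. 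Estimating the resulting coordinate sums, one finds a growth factor of order $\ln N$ surviving after normalization, which again forces $\|M_N(T)\| \to \infty$. The delicate point is choosing the coefficient profile so that the triangle inequality (or a direct convexity estimate) does not wash out the logarithmic gain, but the telescoping product and the harmonic-sum divergence make the obstruction robust.
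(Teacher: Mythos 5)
Your telescoping formula $T^n e_j=\bigl(\frac{j}{j-n}\bigr)^{1/p}e_{j-n}$ and your $p=1$ argument are correct: for $p=1$ the single vector $e_N$ already gives $\|M_N(T)e_N\|_1\asymp \ln N$, which settles that case. The gap is in the case $p>1$: the test vector you propose, supported on the coordinates $N+1,\dots,2N$, does \emph{not} produce the logarithmic growth you claim. The point you overlooked is that for a low image coordinate $e_i$ (which is exactly where the amplification $(j/i)^{1/p}$ is large) the admissible sources are not ``every $e_j$ with $i<j\le i+N$'' but only those lying in the support, i.e. $j\in[N+1,\,i+N]$ --- just $i$ of them. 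With the constant profile $c_j=N^{-1/p}$ the coefficient of $e_i$ ($i\le N$) in $M_N(T)x^{(N)}$ is of order $\frac{N^{-1/p}}{N}\cdot i\,(N/i)^{1/p}\asymp i^{1-1/p}/N$, so these coordinates contribute $\sum_{i\le N} i^{p-1}/N^{p}\asymp 1/p$ to $\|M_N(T)x^{(N)}\|_p^p$, a constant; the coordinates $i>N$ likewise contribute $O(1)$. In fact a H\"older estimate shows that for $p>1$ \emph{no} normalized profile supported on $[N+1,2N]$ can work: one always gets $\|M_N(T)x\|_p\le C_p$. So the harmonic divergence is killed by the truncation of the source range, not by any failure of the triangle inequality, and no choice of coefficients on that support can rescue it.

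The remedy --- and this is what the paper's proof does --- is to spread the test vector over the \emph{first} $n$ coordinates: $x_n:=n^{-1/p}\sum_{s=1}^{n}e_s$. Then each low coordinate $e_s$ receives contributions from \emph{all} sources $j\in[s,n]$ (about $n$ of them, amplified by up to $(n/s)^{1/p}$), so its coefficient in $\frac1n\sum_{j=0}^{n-1}T^jx_n$ is at least $\frac{n^{-1/p}}{n}\,s^{-1/p}\sum_{j=n/2+1}^{n}j^{1/p}\ge c\,s^{-1/p}$ for all $s\le n/2$, with $c>0$ independent of $n$ and $s$ (the inner sum is $\ge c\,n^{1+1/p}$ by an integral comparison). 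Raising to the $p$-th power and summing over $s\le n/2$ gives $\|\frac1n\sum_{j=0}^{n-1}T^jx_n\|_p^p\ge c^p\sum_{s\le n/2}s^{-1}\ge c^p\ln\frac n2\to\infty$. So the harmonic-sum divergence you were aiming for does survive, but only when the support of the test vector reaches down to the small coordinates, where the weights have accumulated the full factor $(n/s)^{1/p}$ over many terms of the Ces\`aro average.
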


\begin{proof}
Let $x_n:=\frac{1}{n^{1/p}}\sum_{s=1}^n e_s$ with even $n$. It is clear that $\|x_n\|_p=1$.
We have
\begin{eqnarray*}
\Bigl\|\frac{1}{n}\sum_{j=0}^{n-1} T^jx_n\Bigr\|^p_p &=&
\frac{1}{n^{p+1}}\Bigl\|\sum_{j=0}^{n-1}\sum_{s=1}^n T^je_s\Bigr\|^p_p=
\frac{1}{n^{p+1}}\Bigl\|\sum_{s=1}^{n}e_s\sum_{j=s}^n \Bigl(\frac{j}{s}\Bigr)^{1/p}\Bigr\|^p_p\\[1pc]
& =&
\frac{1}{n^{p+1}}\sum_{s=1}^{n}\Bigl(\sum_{j=s}^n \Bigl(\frac{j}{s}\Bigr)^{1/p}\Bigr)^p\ge
\frac{1}{n^{p+1}}\sum_{s=1}^{n/2+1}\frac{1}{s}\Bigl(\sum_{j=n/2+1}^{n} j^{1/p}\Bigr)^p,
\end{eqnarray*}
where
$$
\sum_{j=n/2+1}^{n} j^{1/p}\ge\int_{n/2}^n t^{1/p}dt\geq
\frac{1}{p^{-1}+1}\Bigl(n^{1+p^{-1}}-\Bigl(\frac{n}{2}\Bigr)^{1+p^{-1}}\Bigr)=c n^{1+1/p}
$$
with $c=\frac{p}{p+1}(1-\frac{1}{2^{1+p^{-1}}})>0$.
So
$$
\Bigl\|n^{-1}\sum_{j=0}^{n-1} T^jx_n\Bigr\|^p_p\ge
\frac{1}{n^{p+1}}\sum_{s=1}^{n/2} \frac{c^p n^{p+1}}{s}\ge
c^p\ln\frac{n}{2}\to\infty
$$
as $n\to\infty$. Hence $T$ is not Ces\`{a}ro bounded.
\end{proof}


\begin{corollary}
There is no  Ces\`{a}ro bounded weighted forward shift on  $\ell^2(\mathbb{N})$, which is a strict $2$-isometry.
\end{corollary}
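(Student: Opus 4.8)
The plan is to show that being a strict $2$-isometry almost completely determines the weights, and that the resulting operator is essentially (the adjoint of) the backward shift already handled in Proposition \ref{Cesaro}. First I would fix a weighted forward shift $S$ on $\ell^2(\NN)$, $Se_k=w_ke_{k+1}$, assuming $w_k\ge 0$ after a diagonal unitary conjugation (which preserves both the $2$-isometry property and Cesàro boundedness). By Lemma \ref{lema2}(1), $S$ is a $2$-isometry iff $n\mapsto\|S^ne_k\|^2$ is affine for each $k$, so $\|S^ne_k\|^2=\prod_{i=0}^{n-1}w_{k+i}^2=1+n(w_k^2-1)$. Setting $b_k:=w_k^2-1$ and factoring the first weight out of $\|S^ne_k\|^2=w_k^2\|S^{n-1}e_{k+1}\|^2$ yields $b_k=w_k^2\,b_{k+1}$, i.e. the recursion $b_{k+1}^{-1}=b_k^{-1}+1$. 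Positivity $\|S^ne_k\|^2=1+nb_k\ge0$ for all $n$ forces $b_k\ge0$; strictness forces $b_{k_0}\ne0$ for some $k_0$, and the recursion (run in both directions, noting $b_k=0$ would propagate to $b_{k_0}=0$) makes every $b_k>0$. Hence $c:=b_1^{-1}>0$, $b_k^{-1}=c+k-1$, and
$$
w_k^2=\frac{c+k}{c+k-1}\qquad(c>0).
$$
In particular $S$ is bounded and expansive, with $\|S^n\|=(1+n/c)^{1/2}\sim\sqrt{n/c}$; thus the $O(n^{1/2})$ growth alone cannot decide Cesàro boundedness and the finer structure is essential.

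Next I would pass to the adjoint, using that Cesàro boundedness is self-adjoint invariant: since $M_n(S)^*=M_n(S^*)$ we have $\|M_n(S)\|=\|M_n(S^*)\|$, so $S$ is Cesàro bounded iff $S^*$ is. Now $S^*$ is the weighted backward shift $S^*e_1=0$, $S^*e_j=\bigl(\frac{c+j-1}{c+j-2}\bigr)^{1/2}e_{j-1}$ for $j>1$, whose powers telescope to $(S^*)^ie_j=\bigl(\frac{c+j-1}{c+j-1-i}\bigr)^{1/2}e_{j-i}$ for $i<j$ (with $c+j-1-i\ge c>0$). For $c=1$ this is exactly the operator of Proposition \ref{Cesaro} with $p=2$, which is not Cesàro bounded. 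For general $c>0$ I would rerun that proof verbatim with the test vectors $x_n=n^{-1/2}\sum_{s=1}^n e_s$: the only change is the substitution $j\mapsto c+j-1$ inside the telescoped coefficients, and the estimate $\sum_{j}(c+j)^{1/2}\asymp\int(c+t)^{1/2}\,dt$ produces the same lower bound, ending in $\sum_{s=1}^{n/2}\frac{1}{c+s-1}\ge c'\ln n\to\infty$. This shows $S^*$, and therefore $S$, is not Cesàro bounded, which is the assertion.

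The main obstacle is bookkeeping rather than concept: one must check that the median and lower-bound estimates in Proposition \ref{Cesaro} are genuinely robust under the shift $j\mapsto c+j-1$ (the fixed constant $c>0$ only affects lower-order terms), and one must handle the positivity constraints $c+k-1>0$ that single out $c>0$ and guarantee $S$ is a well-defined bounded operator. An alternative that bypasses the adjoint is to run the test-vector computation directly on $S$ via equation (\ref{media}) together with the affine formula $\|S^ne_k\|^2=1+n(w_k^2-1)$; since $S=(S^*)^*$ this is the same calculation transported back, and either route completes the proof.
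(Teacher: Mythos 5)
Your proof is correct, and its skeleton is the same as the paper's: characterize the weights of a strict $2$-isometric forward shift, pass to the adjoint (Cesàro boundedness being adjoint-invariant since $M_n(S)^*=M_n(S^*)$), and defeat the resulting backward shift with the test vectors $x_n=n^{-1/2}\sum_{s=1}^n e_s$ of Proposition \ref{Cesaro}. The two executions differ in instructive ways, though. Where the paper quotes the structure theorem \cite[Theorem 1]{AL} to get $|w_n|^2=p(n+1)/p(n)$ with $\deg p=1$, you rederive the weight formula $w_k^2=\frac{c+k}{c+k-1}$, $c>0$, from scratch out of Lemma \ref{lema2}(1) via the recursion $b_k=w_k^2 b_{k+1}$ together with the positivity and strictness constraints (your propagation argument for $b_k=0$ is the right way to rule out degenerate weights), which makes the argument self-contained. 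And where the paper splits into the cases $b=0$ and $b\neq 0$ of $p(n)=an+b$, disposing of the second by conjugating $T_c$ to $T_1$ with an invertible diagonal operator and invoking similarity invariance of Cesàro boundedness (the paper's displayed identity ``$VT_1=VT_c$'' is a typo for $VT_1=T_cV$), you absorb the parameter $c$ into the estimate itself, checking that the substitution $j\mapsto c+j-1$ perturbs only lower-order terms and that the logarithmic divergence $\sum_{s\le n/2}(c+s-1)^{-1}\ge c'\ln n$ survives. Your route buys a single uniform computation with no case split and no similarity step; the paper's route buys brevity, doing the hard estimate only once (for the weight sequence $\sqrt{j/(j-1)}$) and handling all other parameter values by soft conjugation.
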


\begin{proof}
Assume that $T$ is a weighted forward shift with weights $(w_n)_{n\in \NN}$. By \cite[Theorem 1]{AL} (see also \cite[Remark 3.9]{BMNe}), if $T$ is a strict 2-isometry, then
$$
|w_n|^2=\frac{p(n+1)}{p(n)}\;,
$$
where $p$ is a polynomial of degree 1, that is, $p(n):=an+b$.

First, suppose that $b=0$. Then $w_n=\sqrt{\frac{n}{n-1}}$, since $a\neq 0$. Hence $T^*e_n:=\sqrt{\frac{n}{n-1}}e_{n-1}$.
 By Proposition  \ref{Cesaro}, $T^*$ is not Cesàro bounded. Since  Cesàro boundedness is preserved by taking adjoints, $T$ is not Cesàro bounded.

Now, assume that $b\neq 0$, then $w_n(c):=\sqrt{\frac{cn+1}{c(n-1)+1}}$ with $c\neq 0$. Denote $T_ce_n:=w_n(c)e_{n+1}$ and the diagonal operator $Ve_n:= \alpha_n e_n$, where $\alpha _n:=\sqrt{\frac{c(n-1)+1}{n}}$. Then $V$ is invertible and satisfies that
$VT_1=VT_c$. Moreover, $T_1$ is not Cesàro bounded, by following an argument as in Proposition \ref{Cesaro}. Using  that  Cesàro boundedness is preserved by similarities, we obtain that $T_c$ is not Cesàro bounded.
\end{proof}

\begin{corollary}
There is no absolutely Ces\`{a}ro bounded strict $2$-isometry  on a Hilbert space.
\end{corollary}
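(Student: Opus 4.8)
The plan is to argue by contradiction, playing off the sharp growth bound for absolutely Ces\`{a}ro bounded Hilbert space operators established in Theorem~\ref{acbhilbert} against the rigid polynomial growth of orbits forced by the strict $2$-isometry condition. So suppose, toward a contradiction, that $T$ is an absolutely Ces\`{a}ro bounded strict $2$-isometry on a Hilbert space $H$. By Theorem~\ref{acbhilbert} we immediately have $\lim_{n\to\infty} n^{-1/2}\|T^n\|=0$.

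Next I would extract the opposing lower bound from the $2$-isometry structure. By part (1) of Lemma~\ref{lema2}, since $T$ is a \emph{strict} $2$-isometry, there exists a vector $x\in H$ for which $\|T^nx\|^2$ is a polynomial in $n$ of degree exactly $1$, say $\|T^nx\|^2 = an+b$. The one point that needs a word of justification is that the leading coefficient satisfies $a>0$: a degree-one polynomial that takes the nonnegative values $\|T^nx\|^2\ge 0$ for all $n\in\NN$ cannot have negative leading coefficient, and it is nonzero because the degree is exactly $1$. Consequently $\|T^nx\| = (an+b)^{1/2}\sim a^{1/2}\,n^{1/2}$, so that $n^{-1/2}\|T^nx\|\to a^{1/2}>0$.

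The contradiction then follows at once from the trivial estimate $\|T^n\|\ge \|T^nx\|/\|x\|$, which gives
$$
\liminf_{n\to\infty} n^{-1/2}\|T^n\|\ge \frac{a^{1/2}}{\|x\|}>0,
$$
in direct conflict with the limit being $0$ obtained above. Hence no such $T$ can exist.

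I expect no serious obstacle here: the entire argument is a two-line combination of Theorem~\ref{acbhilbert} and Lemma~\ref{lema2}(1), and the only subtlety is the elementary observation that positivity of $\|T^nx\|^2$ forces the leading coefficient to be positive. (One could alternatively phrase the whole corollary as a one-line consequence of the preceding result on forward shifts together with the $o(n^{1/2})$ bound, but the direct route above is cleaner and does not rely on a shift representation.)
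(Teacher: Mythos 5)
Your proof is correct and is exactly the argument the paper intends: its proof of this corollary is the one-line remark that the result "is immediate by Theorem \ref{acbhilbert} and part (1) of Lemma \ref{lema2}," and your write-up simply fills in those details (the degree-one polynomial $\|T^nx\|^2=an+b$ with $a>0$ forcing $\|T^n\|\gtrsim n^{1/2}$, against the $o(n^{1/2})$ bound). Nothing is missing; the positivity of the leading coefficient is indeed the only point worth stating, and you state it.
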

\begin{proof}
It is immediate by Theorem \ref{acbhilbert} and part (1) of Lemma \ref{lema2}.
\end{proof}

\begin{question}
Is it possible to construct a Ces\`{a}ro bounded  strict $2$-isometry  on an  infinite dimensional Hilbert space?
\end{question}

\section{ Numerically hypercyclic properties of $m$-isometries}
In this section we study numerically hypercyclic $m$-isometries. For simplicity we discuss only operators on Hilbert spaces.

\begin{definition}
Let $H$ be a Hilbert space. An operator $T\in B(X)$ is called numerically hypercyclic if there exists a unit vector $x\in H$ such that the set $\{\langle T^nx,x\rangle: n\in\NN\}$ is dense in $\CC$.
\end{definition}

Clearly the numerical hypercyclicity is preserved by unitary equivalence but in general not by similarity. This leads to the following definition:

\begin{definition} Let $T\in B(X)$.  It is said  that $T$ is \emph{weakly numerically hypercyclic} if $T$ is similar to a numerically hypercyclic operator.
\end{definition}

In \cite[Proposition 1.5]{S},  Shkarin  proved that  $T\in B(H)$ is weakly numerically hypercyclic if and only if there exist $x,y\in H$ such that the set
$\{\langle T^nx,y\rangle: n\in\NN\}$ is dense in $\mathbb{C}$.

Faghih and Hedayatian  proved in \cite{FaHe} that   $m$-isometries on a Hilbert space are not weakly hypercyclic.  Moreover,  $m$-isometries on a Banach space are not 1-weakly hypercyclic \cite{BBF}.  However, there are isometries that are weakly supercyclic \cite{Sanders05} (in particular cyclic). Thus the first natural question is the following: are
there numerically hypercyclic $m$-isometries?

\begin{theorem}
There are no    weakly numerically hypercyclic $m$-isometries on $B(\mathbb{C}^n)$ for $n\le 3$.
\end{theorem}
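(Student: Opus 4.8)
The plan is to reduce the statement to a non-density assertion about scalar orbits and then to exploit the rigid structure of $m$-isometries in low dimensions. First I would invoke Shkarin's characterization \cite[Proposition 1.5]{S}: $T$ is weakly numerically hypercyclic if and only if there exist $x,y$ such that $\{\langle T^nx,y\rangle:n\in\NN\}$ is dense in $\CC$. Thus it suffices to prove that for every $m$-isometry $T$ on $\CC^n$ with $n\le 3$ and every pair $x,y$, the scalar sequence $f(n):=\langle T^nx,y\rangle$ fails to be dense in $\CC$.

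Next I would bring in the structure theorem. Since there is no strict $m$-isometry with $m$ even in finite dimensions (\cite[Proposition 1.23]{AgS}), every $m$-isometry on $\CC^n$ is a strict $m'$-isometry with $m'$ odd, so by part (2) of Lemma \ref{lema2} it has the form $T=U+Q$ with $U$ unitary, $Q$ nilpotent and $UQ=QU$. Diagonalizing $U$ and using that $Q$ preserves each eigenspace of $U$, I obtain a decomposition $T=\bigoplus_{\mu}(\mu I_{E_\mu}+Q_\mu)$ over the distinct unimodular eigenvalues $\mu$, with $Q_\mu$ nilpotent on $E_\mu$. Expanding $T^n=\bigoplus_\mu\mu^n\sum_{j\ge0}\binom{n}{j}\mu^{-j}Q_\mu^j$ gives
$$
\langle T^nx,y\rangle=\sum_\mu \mu^n P_\mu(n),\qquad \deg P_\mu\le \dim E_\mu-1 .
$$
Here is where the hypothesis $n\le 3$ enters decisively: two Jordan blocks of size $\ge 2$ would require dimension $\ge 4$, so at most one eigenvalue $\mu_0$ can carry a nontrivial nilpotent part, whence at most one $P_\mu$ is non-constant. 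Consequently $f(n)=\mu_0^nP(n)+b(n)$, where $P$ is a polynomial of degree at most $2$ and $b(n)=\sum_{\mu\ne\mu_0}(\mathrm{const})\,\mu^n$ is bounded, say $|b(n)|\le B$. If $\deg P=0$ the orbit is bounded and cannot be dense in the unbounded plane $\CC$, so I may assume $\deg P=d\ge1$.

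The main work, and the step I expect to be the genuine obstacle, is then a counting argument showing that such a sequence $f(n)=\mu_0^nP(n)+b(n)$ is not dense. The key observation is that $|f(n)|=|P(n)|+O(1)$ with $|P(n)|\to\infty$, and the gaps $|P(n+1)|-|P(n)|$ are eventually bounded below by a positive constant (they even tend to $\infty$ when $d\ge2$). Hence each annulus $\{R-1\le|z|\le R+1\}$ contains at most a fixed number $\kappa$ of the points $f(n)$, uniformly in $R$. For large $R$ these $\le\kappa$ points subtend an angular set of measure bounded away from $2\pi$ on the circle $|z|=R$, so I can choose $w$ with $|w|=R$ whose angular distance to all of them is large; then the ball $B(w,\tfrac12)$ meets no $f(n)$ (points of modulus outside $[R-\tfrac12,R+\tfrac12]$ are excluded by modulus, the remaining $\le\kappa$ by angle). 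This exhibits an open ball disjoint from the orbit, contradicting density.

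The delicate point to get right is the uniform-in-$R$ bound on the number of orbit points in each annulus; this rests on the eventual monotonicity of $|P(n)|$ and on the lower bound for its consecutive gaps, which must be checked separately for $d=1$ (linear spiral, constant positive gaps) and $d\ge2$ (gaps diverging). Once that quantitative input is secured, the angular-avoidance selection of $w$ is routine, and assembling the three ingredients—Shkarin's reduction, the $U+Q$ decomposition, and the annulus count—completes the proof.
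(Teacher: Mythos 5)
Your proof is correct, but it follows a genuinely different route from the paper's. The paper disposes of the statement in two lines by quoting Shkarin's dimension-specific spectral results: by \cite[Theorem 1.13]{S} (for $\CC^2$) and \cite[Theorem 1.14]{S} (for $\CC^3$), any weakly numerically hypercyclic operator in these dimensions must have an eigenvalue $\lambda$ with $|\lambda|>1$, which is impossible for an $m$-isometry since its eigenvalues are unimodular. You instead combine \cite[Proposition 1.5]{S} with the structure theorem for finite-dimensional strict $m$-isometries (part (2) of Lemma \ref{lema2}, via the fact that strict even-order isometries do not exist in finite dimensions), reduce to scalar sequences of the form $\mu_0^nP(n)+b(n)$ with $b$ bounded and at most one non-constant polynomial term (this is exactly where $n\le 3$ enters, as two Jordan blocks of size $\ge 2$ need dimension $\ge 4$), and then give an annulus-counting argument to rule out density. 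What the paper's proof buys is brevity, at the cost of leaning on Shkarin's low-dimensional theorems, whose availability is precisely the reason for the restriction $n\le 3$. What your proof buys is self-containedness and transparency about the mechanism: in fact your density argument is the same technique the paper itself uses later (for the non-existence of weakly numerically hypercyclic strict $(2n-3)$- and $(2n-1)$-isometries on $\CC^n$), where orbits $\la^kp(k)+a\mu^k$ are asserted to be non-dense with the phrase ``one can show easily''; your uniform bound on the number of orbit points per annulus, split into the cases $\deg P=1$ and $\deg P\ge 2$, supplies exactly the detail omitted there. The one point you should state explicitly rather than in passing is that every $m$-isometry is a \emph{strict} $m'$-isometry for some odd $m'\le m$ (using that $k$-isometries are $(k+1)$-isometries, take the minimal order and invoke \cite[Proposition 1.23]{AgS}), since Lemma \ref{lema2} is stated only for strict odd-order isometries; with that remark included, your argument is complete.
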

\begin{proof}
If $n=1$, there are not weakly numerically hypercyclic operators.
Let  $n=2$. By \cite[Theorem 1.13]{S}, if $T\in B(\mathbb{C}^2)$ is a weakly numerically hypercyclic operator, then there exists $\lambda\in \sigma(T)$, with $|\lambda|>1$ and thus $T$ is not  an $m$-isometry. For $n=3$, it is the same by \cite[Theorem 1.14]{S}.
\end{proof}

We discuss the existence of weakly numerically hypercyclic $m$-isometries on $n$-dimensional spaces for $n\ge 4$.

We say that $\lambda_1,\lambda_2\in \mathbb{T}$ are \emph{rationally independent} if $ \lambda_1^{m_1}\lambda_2^{m_2}\ne 1$ for every non-zero pair $m=(m_1,m_2)\in \mathbb{Z}^2$, or equivalently if $\lambda_j=e^{i\theta_j}$ with $\theta_j\in \mathbb{R}$ with $\pi, \theta_1,\theta_2 $ are linearly independent over the field $\mathbb{Q}$ of rational numbers.

If $T\in B(X)$ and  there are rationally independent $\lambda_1,\lambda_2\in \mathbb{T}$ such that $ker(T-\lambda_jI)^2 \ne ker(T-\lambda_jI)$ for $j\in \{1,2\}$, then $T$ is weakly numerically hypercyclic \cite[Theorem 1.9]{S}. Moreover if  $X$ is a Hilbert space, then  $T$ is numerically hypercyclic \cite[Proposition 1.12]{S}.
The following result gives an answer to the above question for some $m$-isometries.

%
%
%
%
%

\begin{theorem}\label{3-isometry}
There exists a numerically hypercyclic strict $(2m-1)$-isometry on $B(\mathbb{C}^n)$, with $n\geq 4$, for $2\leq m\leq n-2$.
\end{theorem}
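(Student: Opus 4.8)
The plan is to produce an explicit block-diagonal operator on $\mathbb{C}^n$ built so that each of its two required features---being a strict $(2m-1)$-isometry and being numerically hypercyclic---can be read off directly from the tools already available: part (3) of Lemma \ref{lema2} for the isometry structure, and Shkarin's eigenvalue criterion \cite[Theorem 1.9 and Proposition 1.12]{S} for the numerical hypercyclicity. Recall that the latter asserts that if $T$ admits two rationally independent $\lambda_1,\lambda_2\in\mathbb{T}$ with $\ker(T-\lambda_jI)^2\neq\ker(T-\lambda_jI)$ for $j\in\{1,2\}$, then on a Hilbert space $T$ is numerically hypercyclic. Thus the whole problem reduces to exhibiting one operator that is simultaneously of the form isometry-plus-commuting-nilpotent and carries two rationally independent unimodular eigenvalues, each with a nontrivial Jordan block.

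First I would fix rationally independent $\lambda_1,\lambda_2\in\mathbb{T}$; note that rational independence already forces $\lambda_1\neq1$, $\lambda_2\neq1$ and $\lambda_1\neq\lambda_2$ (take the exponent pairs $(1,0)$, $(0,1)$, $(1,-1)$). Writing $J_k(\mu)=\mu I_k+N_k$ for the $k\times k$ Jordan block, with $N_k$ the nilpotent shift of order $k$, I would set
$$
T:=J_m(\lambda_1)\oplus J_2(\lambda_2)\oplus I_{n-m-2}.
$$
The hypothesis $m\le n-2$ guarantees $n-m-2\ge0$, so $T\in B(\mathbb{C}^n)$, and $m\ge2$ guarantees that the first summand is a genuine Jordan block of size $\ge2$. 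Splitting $T=U+Q$ with $U=\lambda_1I_m\oplus\lambda_2I_2\oplus I_{n-m-2}$ and $Q=N_m\oplus N_2\oplus0$, I would check that $U$ is unitary (all eigenvalues lie on $\mathbb{T}$), that $Q$ is nilpotent of order exactly $m$ (since $m\ge2$, its order is $\max\{m,2\}=m$), and that $U$ and $Q$ commute blockwise because $U$ acts as a scalar on each block. Part (3) of Lemma \ref{lema2} then yields immediately that $T=U+Q$ is a strict $(2m-1)$-isometry.

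It remains to verify the two eigenvalue conditions. For $\lambda_1$, the operator $T-\lambda_1I$ is invertible on the $J_2(\lambda_2)$ and $I_{n-m-2}$ summands (because $\lambda_2\neq\lambda_1$ and $1\neq\lambda_1$) and equals $N_m$ on the first summand; hence $\ker(T-\lambda_1I)=\ker N_m$ is one-dimensional while $\ker(T-\lambda_1I)^2=\ker N_m^2$ is two-dimensional, so the two kernels differ. The identical reasoning at $\lambda_2$, using the size-$2$ block $J_2(\lambda_2)$, gives $\ker(T-\lambda_2I)\neq\ker(T-\lambda_2I)^2$. Applying \cite[Theorem 1.9 and Proposition 1.12]{S} then completes the argument.

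The step I expect to require the most care is the simultaneous bookkeeping: one must place the order-$m$ nilpotent block at one of the two distinguished eigenvalues so that the \emph{same} operator is both strict of the correct order $(2m-1)$ and has nontrivial Jordan structure at two rationally independent eigenvalues, all within the dimension budget. This is exactly what forces the hypotheses $2\le m\le n-2$: the two Jordan blocks consume $m+2$ dimensions, the nilpotent part must have order exactly $m$ (which is automatic once $m\ge2$, since the second block contributes only order $2\le m$), and the residual unitary summand $I_{n-m-2}$ absorbs the remaining dimensions without disturbing either property.
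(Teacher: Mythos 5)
Your proof is correct and takes essentially the same route as the paper: the paper's operator $D+Q$ (diagonal unimodular part plus commuting nilpotent) is precisely your block Jordan form $J_m(\lambda_1)\oplus J_2(\lambda_2)\oplus I_{n-m-2}$, and both arguments invoke part (3) of Lemma \ref{lema2} for the strict $(2m-1)$-isometry property and Shkarin's two-eigenvalue criterion \cite{S} for numerical hypercyclicity. Your explicit observation that rational independence forces $\lambda_1\neq 1$, $\lambda_2\neq 1$ and $\lambda_1\neq\lambda_2$ (needed so the other blocks of $T-\lambda_jI$ are invertible) is a detail the paper leaves implicit, but otherwise the two proofs coincide.
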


\begin{proof}
Let $\ell \in \{ 2, 3, \ldots, n-2\}$. We will construct a numerically hypercyclic strict $(2\ell -1)$-isometry. Define $D$ the diagonal operator with diagonal
$$
(\underbrace{\lambda _1, \cdots, \lambda_1}_{\ell }, \lambda _2, \lambda _2, \underbrace{1, \cdots , 1}_{k-2\ell})
$$
where $\lambda _1$ and  $\lambda _2$ are rationally independent complex numbers   with modulus 1 and $Q$  by
\begin{eqnarray*}
Q e_i :&= & e_{i-1} \mbox { for } i\in \{ 2, 3, \cdots, \ell \}\\
Q e_{\ell +2}:&=& e_{\ell +1} \mbox { and }\\
Q e_i:&= &0 \mbox{ for } i=1, i=\ell+1 \mbox{ and  } i\geq \ell +3 \;.
\end{eqnarray*}
It is clear that $Q^\ell =0 $ and $Q^{\ell -1} e_\ell =e_1\neq 0$.
 Moreover,
\begin{eqnarray*}
QDe_i&=& DQe_i= \lambda _1 e_{i-1} \mbox{ for } 2\leq i\leq \ell \\
QDe_{\ell +2} &=& DQe_{\ell +2} =\lambda _2 e_{\ell +1} \\
QDe_i&=&DQe_i=0 \mbox{ for } i=1, \ell+1\mbox{ and }\geq i\ge\ell +3\;.
\end{eqnarray*}
By part (3) of Lemma \ref{lema2}, $T:=D+Q$ is a strict $(2\ell -1)$-isometry for any $\ell \in \{ 2, 3, \cdots, n-2\}$.

Let us prove that $T$ satisfies that $Ker (\lambda_i -T)\neq Ker (\lambda_i -T)^2$ for $i=1,2$. By definition  $e_2\in Ker (\lambda_1 -T)^2\setminus Ker (\lambda_1 -T)$ and $e_{\ell +1} \in Ker (\lambda_2 -T)^2\setminus Ker (\lambda_2 -T)$. So by \cite[Proposition 1.9]{S},  $T$ is numerically hypercyclic.
\end{proof}

As a consequence of the proof of Theorem \ref{3-isometry}, we obtain

\begin{corollary}
Let $H$ be a  complex Hilbert space with dimension at least 4. Then there exists a   numerically hypercyclic strict 3-isometry on H.
\end{corollary}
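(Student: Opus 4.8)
The plan is to extract the essential dimension count from the proof of Theorem \ref{3-isometry} and verify that it applies to any infinite or finite complex Hilbert space of dimension at least $4$. The construction in Theorem \ref{3-isometry} produces, on $\CC^n$ for $n\ge 4$, a numerically hypercyclic strict $(2\ell-1)$-isometry for each $\ell\in\{2,\dots,n-2\}$; taking the smallest admissible value $\ell=2$ gives a strict $3$-isometry on $\CC^n$ for every $n\ge 4$. So the first step is simply to specialize that theorem to $\ell=2$, which requires $n-2\ge 2$, i.e. $n\ge 4$, matching the hypothesis here.

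Concretely, I would take the four-dimensional building block from the proof with $\ell=2$: on $\CC^4$ let $D$ be the diagonal operator $\mathrm{diag}(\lambda_1,\lambda_1,\lambda_2,1)$ with $\lambda_1,\lambda_2\in\T$ rationally independent, and let $Q$ be the nilpotent operator with $Qe_2=e_1$ and $Qe_i=0$ for $i\in\{1,3,4\}$. Then $Q^2=0$, $Q\ne 0$, and $Q$ commutes with $D$, so by part (3) of Lemma \ref{lema2} the operator $T_0:=D+Q$ is a strict $3$-isometry on $\CC^4$; moreover $e_2\in\ker(\lambda_1-T_0)^2\setminus\ker(\lambda_1-T_0)$, and one checks the analogous condition for $\lambda_2$, so by \cite[Proposition 1.9]{S} and \cite[Proposition 1.12]{S} $T_0$ is numerically hypercyclic. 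This settles the case $\dim H=4$ verbatim.

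The remaining point is the passage to arbitrary $H$ with $\dim H\ge 4$, possibly infinite-dimensional. Here I would split $H=H_0\oplus H_1$ with $H_0$ a fixed $4$-dimensional subspace, and set $T:=T_0\oplus I_{H_1}$. Adjoining the identity on $H_1$ does not change the $m$-isometry data: since $\|T^n(x_0\oplus x_1)\|^2=\|T_0^n x_0\|^2+\|x_1\|^2$ and $\|T_0^n x_0\|^2$ is a polynomial in $n$ of degree at most $2$ which attains degree $2$ for some $x_0$, part (1) of Lemma \ref{lema2} shows $T$ is again a strict $3$-isometry. Numerical hypercyclicity of $T_0$ on $H_0$ transfers to $T$ on $H$ because the required dense orbit $\{\langle T^n x,x\rangle\}$ can be realized using the witnessing unit vector from $H_0$, whose inner products are unaffected by the identity summand.

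The main obstacle, and it is a mild one, is bookkeeping rather than conceptual: one must make sure the numerical-hypercyclicity witness chosen in $H_0$ still works after the direct-sum extension, and that the strictness (degree exactly $2$, not lower) is preserved. Both follow from the direct-sum decomposition of $\|T^n x\|^2$ noted above, so the corollary reduces cleanly to the already-established finite-dimensional construction.
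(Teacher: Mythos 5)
Your overall strategy --- specialize the construction of Theorem \ref{3-isometry} to $\ell=2$ on a $4$-dimensional block and pad with the identity on an orthogonal complement --- is exactly what the paper intends, and your padding argument (strictness via $\|T^n(x_0\oplus x_1)\|^2=\|T_0^nx_0\|^2+\|x_1\|^2$ and part (1) of Lemma \ref{lema2}, plus transfer of the witnessing vector) is sound. However, your $4$-dimensional block itself is wrong. The paper's construction for $\ell=2$, $n=4$ uses the diagonal $(\lambda_1,\lambda_1,\lambda_2,\lambda_2)$ with $Qe_2=e_1$ \emph{and} $Qe_4=e_3$, i.e.\ a nontrivial $2\times 2$ Jordan-type block at each of the two rationally independent eigenvalues. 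You instead take $D=\mathrm{diag}(\lambda_1,\lambda_1,\lambda_2,1)$ and $Q$ with $Qe_2=e_1$ only, so $T_0=D+Q$ has nontrivial Jordan structure only at $\lambda_1$: since $\lambda_2$ occurs once on the diagonal and $Qe_3=Qe_4=0$, one has $\ker(T_0-\lambda_2 I)^2=\ker(T_0-\lambda_2 I)=\operatorname{span}\{e_3\}$. The "analogous condition for $\lambda_2$" that you claim to check is therefore false, and Shkarin's criterion \cite[Theorem 1.9]{S}, which requires $\ker(T-\lambda_jI)^2\neq\ker(T-\lambda_jI)$ at \emph{two} rationally independent unimodular $\lambda_j$, does not apply.

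This is not merely a failure of the method: your $T_0$ is genuinely not numerically hypercyclic, nor even weakly so. For any $x,y\in\CC^4$ one computes
$$
\langle T_0^nx,y\rangle=\lambda_1^n\alpha+n\lambda_1^{n-1}\beta+\lambda_2^n\gamma+\delta
$$
with $\alpha=x_1\bar y_1+x_2\bar y_2$, $\beta=x_2\bar y_1$, $\gamma=x_3\bar y_3$, $\delta=x_4\bar y_4$. If $\beta\neq 0$ the modulus tends to infinity, so every bounded set contains only finitely many terms; if $\beta=0$ the sequence is bounded. In either case $\{\langle T_0^nx,y\rangle:n\in\NN\}$ is never dense in $\CC$ (this is the same phenomenon behind the paper's theorem that $\CC^n$ supports no weakly numerically hypercyclic strict $(2n-1)$- or $(2n-3)$-isometries). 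The fix is immediate: replace your block by the paper's, $D=\mathrm{diag}(\lambda_1,\lambda_1,\lambda_2,\lambda_2)$, $Qe_2=e_1$, $Qe_4=e_3$, $Qe_1=Qe_3=0$. Then $Q$ is nilpotent of order $2$ and commutes with the unitary $D$, so $T_0$ is a strict $3$-isometry by part (3) of Lemma \ref{lema2}; moreover $e_2\in\ker(T_0-\lambda_1I)^2\setminus\ker(T_0-\lambda_1I)$ and $e_4\in\ker(T_0-\lambda_2I)^2\setminus\ker(T_0-\lambda_2I)$, so \cite[Theorem 1.9, Proposition 1.12]{S} give numerical hypercyclicity, and your direct-sum extension to any $H$ with $\dim H\ge 4$ then goes through unchanged.
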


\begin{theorem}
An $n$-dimensional Hilbert space supports no weakly numerically hypercyclic strict $(2n-3)$ or $(2n-1)$-isometries.
\end{theorem}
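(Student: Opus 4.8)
The plan is to combine the structure theorem for strict $m$-isometries of odd order on finite-dimensional spaces with Shkarin's characterization of weak numerical hypercyclicity. Both $2n-3$ and $2n-1$ are odd, and by the remark recalled above (\cite[Proposition 1.23]{AgS}) there are no strict $m$-isometries for even $m$ on a finite-dimensional space, so these are exactly the cases that can occur. First I would apply part (2) of Lemma \ref{lema2}: any strict $(2\ell-1)$-isometry $T$ on $\mathbb{C}^n$ can be written as $T=U+Q$ with $U$ unitary, $Q$ nilpotent of order $\ell$, and $UQ=QU$, where $\ell=n$ in the $(2n-1)$-case and $\ell=n-1$ in the $(2n-3)$-case.

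Next I would use the commutation relation to pin down the structure. Since $U$ is unitary it is diagonalizable with pairwise orthogonal eigenspaces $H=\bigoplus_\mu H_\mu$, and since $Q$ commutes with $U$ it preserves each $H_\mu$; writing $Q_\mu=Q|_{H_\mu}$ we get $T=\bigoplus_\mu(\mu I+Q_\mu)$ with each $Q_\mu$ nilpotent and $Q_\mu^{\dim H_\mu}=0$. Because the nilpotency order of $Q$ equals $\max_\mu\operatorname{ord}(Q_\mu)\le\max_\mu\dim H_\mu$ while $\sum_\mu\dim H_\mu=n$, the value of $\ell$ severely restricts the eigenstructure: for $\ell=n$ there must be a single eigenspace equal to all of $\mathbb{C}^n$, so $U=\mu_0 I$ and $T=\mu_0 I+Q$; for $\ell=n-1$ one has either this same situation or exactly two eigenspaces $H_{\mu_0},H_{\mu_1}$ of dimensions $n-1$ and $1$, with $Q_{\mu_1}=0$.

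The key computation is then to evaluate $\langle T^kx,y\rangle$ for arbitrary $x,y$. Decomposing $x,y$ along the orthogonal eigenspaces and using $(\mu I+Q_\mu)^k=\mu^k\sum_{j\ge 0}\binom{k}{j}\mu^{-j}Q_\mu^j$ together with $Q_\mu^{\dim H_\mu}=0$, I obtain
\[
\langle T^kx,y\rangle=\mu_0^k\,p(k)+c\,\mu_1^k,
\]
where $|\mu_0|=|\mu_1|=1$, $p$ is a polynomial (of degree at most $\dim H_{\mu_0}-1$), $c=\langle x_1,y_1\rangle$ is a constant, and the second term is simply absent in the single-eigenvalue case. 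I would then prove the elementary lemma that such a sequence is never dense in $\mathbb{C}$: if $\deg p\ge 1$ then $|\langle T^kx,y\rangle|\ge|p(k)|-|c|\to\infty$, so only finitely many terms lie in any bounded disk and the set cannot be dense there; and if $p$ is constant the whole sequence is bounded, hence not dense in the unbounded plane $\mathbb{C}$.

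Finally, since this holds for \emph{every} choice of $x,y$, Shkarin's characterization (\cite[Proposition 1.5]{S}) — that $T$ is weakly numerically hypercyclic exactly when some $x,y$ make $\{\langle T^kx,y\rangle:k\in\NN\}$ dense in $\mathbb{C}$ — immediately gives that $T$ is not weakly numerically hypercyclic, proving the theorem. I expect the main obstacle to be the structural step: reading off from the nilpotency order $\ell\in\{n-1,n\}$ and the constraint $\sum_\mu\dim H_\mu=n$ that at most two eigenvalues can occur, and that the large eigenspace contributes a genuine polynomial in $k$ while the one-dimensional eigenspace contributes only a bounded oscillating term. Once the representation $\langle T^kx,y\rangle=\mu_0^k p(k)+c\mu_1^k$ is secured, the density obstruction is routine.
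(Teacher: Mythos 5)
Your proposal is correct, and its endgame (the representation $\langle T^kx,y\rangle=\mu_0^kp(k)+c\mu_1^k$, the elementary non-density argument, and the appeal to Shkarin's characterization \cite[Proposition 1.5]{S}) coincides with the paper's proof. Where you differ is in how the structure of $T$ is pinned down. The paper works with the Jordan form: using the polynomial characterization in part (1) of Lemma~\ref{lema2}, the existence of a vector $u$ with $\|T^ku\|^2$ of degree $2n-2$ (resp.\ $2n-4$) forces a single Jordan block of size $n$ (resp.\ blocks of sizes $n-1$ and $1$) with unimodular eigenvalues, so $T=\la I+Q$ with $Q^n=0$, respectively $\langle T^kx,y\rangle=\la^kp(k)+a\mu^k$. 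You instead invoke part (2) of Lemma~\ref{lema2} (the theorem of \cite{BMNo} that an odd strict $m$-isometry on a finite-dimensional space is a commuting sum $U+Q$ of a unitary and a nilpotent of order $\frac{m+1}{2}$), decompose along the orthogonal eigenspaces of $U$, and count dimensions against the nilpotency order to conclude that at most two eigenvalues occur. Your route has the advantage of not needing to justify separately that the eigenvalues are unimodular or that Jordan block size governs the degree of $\|T^kx\|^2$ (points the paper passes over quickly), since unitarity of $U$ is handed to you by the cited structure theorem; the paper's route is marginally more economical in that the Jordan analysis identifies the block sizes directly from the strictness degree without the eigenspace bookkeeping. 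A small point in your favor: your case analysis for $\ell=n-1$ explicitly allows the degenerate possibility of a single eigenvalue with two Jordan blocks (sizes $n-1$ and $1$), which the paper's phrasing covers only implicitly by permitting $\mu=\la$ in its formula.
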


\begin{proof}
Let $H$ be a finite-dimensional Hilbert space, $\dim H=n<\infty$. Suppose on the contrary that $T\in B(H)$ is a weakly numerically hypercyclic $(2n-1)$-isometry.
Since $\|T^kx\|^2$ grows polynomially for each $x\in H$ and there exists $u\in H$ such that
$\|T^ku\|^2$ is a  polynomial  of degree $2n-2$, the Jordan form of $T$ has only one block corresponding to an eigenvalue $\la$ with $|\la|=1$. Thus $T=\la I+Q$ where $Q^n=0$. Thus
$$
T^k =\sum_{j=0}^{n-1}\binom{k}{j}\la^{k-j}Q^{j}=
\la^k\sum_{j=0}^k {\binom{k}{j}}\la^{-j}Q^j
$$
for all $k\in\NN$.

Let $x,y\in H$ and suppose that the set $\{\langle T^kx,y\rangle:k\in\NN\}$ is dense in $\CC$. We have
$\langle T^kx,y\rangle= \la^k p(k)$ for some polynomial $p$ of degree $\le n-1$.
If $\deg p\ge 1$ then $|\langle T^kx,y\rangle|\to\infty$ so the set $\{\langle T^kx,y\rangle:k\in\NN\}$ is not dense in $\CC$.

If $\deg p=0$ then the set $\{\langle T^kx,y\rangle:k\in\NN\}$ is bounded and again is not dense in $\CC$.
Hence $T$ is not weakly numerically hypercyclic.

The case of $(2n-3)$-isometries can be treated similarly. If $T\in B(H)$ is a strict $(2n-3)$-isometry then the Jordan form of $T$ has two blocks: one of dimension $n-1$ corresponding to an eigenvalue $\la$, $|\la|=1$ and the second one-dimensional block corresponding to an eigenvalue $\mu, |\mu|=1$. For $x,y\in H$ we have $\langle T^kx,y\rangle=\la^kp(k)+ a\mu^k$ for some polynomial $p, \deg p\le n-2$ and a number $a\in \CC$. Again one can show easily that the set $\{\langle T^kx,y\rangle:k\in\NN\}$ cannot be dense in $\CC$. Hence there are no weakly numerically hypercyclic $(2n-3)$-isometries on $H$.
\end{proof}




\begin{theorem}
For $m \ge 2$, there exists a  numerically hypercyclic strict $m$-isometry on $\ell^2(\mathbb{N})$.
\end{theorem}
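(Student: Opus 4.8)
The plan is to realise a single forward weighted shift that is simultaneously a strict $m$-isometry and numerically hypercyclic, treating all $m\ge 2$ uniformly. First I would fix a polynomial $p$ of degree $m-1$ with $p(j)>0$ for every $j$, and let $T$ be the forward weighted shift on $\ell^2(\NN)$ given by $Te_j=w_je_{j+1}$ with $w_j=\sqrt{p(j+1)/p(j)}$. Then $T^ne_j=\beta_{j,n}e_{j+n}$ where $\beta_{j,n}=\sqrt{p(j+n)/p(j)}$, so for every $x=\sum_j a_je_j$ one has $\|T^nx\|^2=\sum_j|a_j|^2\,p(j+n)/p(j)$, which is a polynomial in $n$ of degree $\le m-1$, of degree exactly $m-1$ on $e_1$. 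By part (1) of Lemma \ref{lema2} this already makes $T$ a strict $m$-isometry (this is the familiar weight condition for $m$-isometric shifts). So the isometry part is immediate, and the whole difficulty is concentrated in producing the numerically hypercyclic unit vector.

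Next I would support the vector on a sparse set. Choose, greedily, an infinite Sidon set $S=\{p_1<p_2<\cdots\}\subset\NN$ (pairwise distinct differences) and write $x=\sum_k a_{p_k}e_{p_k}$. Since the differences $p_l-p_k$ are pairwise distinct, for each $n\ge 1$ there is at most one pair $k<l$ with $p_l-p_k=n$, and hence $\langle T^nx,x\rangle=a_{p_k}\overline{a_{p_l}}\,\sqrt{p(p_l)/p(p_k)}$ (and $0$ when $n$ is not a difference). Thus the numerical orbit collapses to the two-parameter family $\{\,a_{p_k}\overline{a_{p_l}}\,\sqrt{p(p_l)/p(p_k)}:k<l\,\}$, and it suffices to choose the coefficients so that this family is dense in $\CC$.

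The coefficients are chosen by an inductive target-chasing argument, which is the technical heart of the proof. Fix a sequence $(z_i)$ dense in $\CC$. For each $i$ reserve a fresh pair of indices $p_{k_i}<p_{l_i}$ in $S$ lying beyond all previously used ones, set $|a_{p_{k_i}}|=|a_{p_{l_i}}|=2^{-i}$, and pick the phases so that $\arg\bigl(a_{p_{k_i}}\overline{a_{p_{l_i}}}\bigr)$ approximates $\arg z_i$. The modulus of the corresponding orbit value is $4^{-i}\sqrt{p(p_{l_i})/p(p_{k_i})}$; since $p(p_{l_i})/p(p_{k_i})$ can be made arbitrarily large by pushing $p_{l_i}$ far out, I can force this modulus to approximate $|z_i|$. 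The constraint $\sum_k|a_{p_k}|^2=\sum_i 2\cdot 4^{-i}<\infty$ keeps $x\in\ell^2(\NN)$, so after normalising $x$ (rescaling the targets accordingly) the set $\{\langle T^nx,x\rangle:n\in\NN\}$ is dense in $\CC$, i.e. $T$ is numerically hypercyclic.

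The main obstacle is precisely this last step: realising prescribed moduli and arguments for the orbit values while keeping $x$ square-summable and the support Sidon so that distinct pairs never interfere. The tension is between needing $|a_{p_k}|$ small for summability and needing $\sqrt{p(p_l)/p(p_k)}$ large to reach far-away targets; the freedom to send $p_{l_i}\to\infty$ resolves it for every degree $m-1\ge 1$, so $m=2$ is handled on the same footing as the rest, even though there the growth $\sqrt{p_{l_i}/p_{k_i}}$ is slowest. For $m\ge 3$ one could instead bypass the construction and take the direct sum of the finite-dimensional numerically hypercyclic strict $3$-isometry from the corollary to Theorem \ref{3-isometry} with any infinite-dimensional strict $m$-isometric shift, using that such a direct sum is a strict $\max\{3,m\}$-isometry and that Shkarin's kernel criterion \cite[Proposition 1.12]{S} is inherited by a direct summand; but this argument cannot reach $m=2$, which is exactly why the uniform weighted-shift construction above is the one I would carry out.
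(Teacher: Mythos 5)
Your proposal is correct in substance, but it reaches the conclusion by a genuinely different route than the paper. The paper's proof is purely by citation: forward weighted shifts on $\ell^2(\NN)$ that are strict $m$-isometries exist by \cite[Theorem 1]{AL}; no strict $m$-isometry with $m\ge 2$ is power bounded by \cite[Theorem 2]{COT}; and a forward weighted shift on $\ell^p(\NN)$, $1<p<\infty$, is numerically hypercyclic if and only if it is not power bounded, by \cite{KPS} and \cite{S}. You take exactly the same operator (weights $w_j=\sqrt{p(j+1)/p(j)}$) and verify strict $m$-isometricity the same way (part (1) of Lemma \ref{lema2}), but instead of quoting the Kim--Peris--Song/Shkarin characterization you build the numerically hypercyclic vector by hand: a Sidon-supported vector so that each $\langle T^nx,x\rangle$ collapses to a single term, followed by target-chasing of a dense sequence. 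In effect you re-prove the special case of the cited criterion that is needed here; this makes the argument self-contained and constructive, at the price of redoing (a version of) Shkarin's work, whereas the paper's version is three lines and exhibits the theorem as an instance of a general principle.

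One step of your construction needs repair. With the moduli frozen at $|a_{p_{k_i}}|=|a_{p_{l_i}}|=2^{-i}$, the orbit value at stage $i$ has modulus $4^{-i}\sqrt{p(p_{l_i})/p(p_{k_i})}$, which ranges over a \emph{discrete} set bounded below by roughly $4^{-i}$: the fact that the ratio ``can be made arbitrarily large'' does not let you approximate a prescribed $|z_i|$ (unboundedness is not density), and targets with $|z_i|$ well below $4^{-i}$ are unreachable at stage $i$ altogether. The cleanest fix stays entirely within your scheme: treat the common modulus as a free parameter $t_i\in(0,2^{-i}]$, first choose the fresh pair so that $p(p_{l_i})/p(p_{k_i})\ge 16^i|z_i|^2$ (possible since the ratio tends to infinity as $p_{l_i}\to\infty$, while the Sidon requirement excludes only finitely many candidates at each stage), and then solve $t_i^2\sqrt{p(p_{l_i})/p(p_{k_i})}=|z_i|$ exactly; together with the choice of phases this hits $z_i$ on the nose, and $\sum_i 2t_i^2<\infty$ keeps $x\in\ell^2(\NN)$. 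Note also that no ``rescaling of targets'' is needed when normalizing: density of $\{\langle T^nx,x\rangle:n\in\NN\}$ in $\CC$ is invariant under multiplication by the nonzero constant $\|x\|^{-2}$, and the extra cross terms coming from pairs formed across different stages are harmless, since a set containing a dense subset is dense. Your closing observation --- that for $m\ge3$ one could instead take a direct sum with the finite-dimensional example of Theorem \ref{3-isometry}, but that this cannot produce the case $m=2$ --- is also correct.
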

\begin{proof}

For $m\ge 2$, no strict $m$-isometry is  power bounded \cite[Theorem 2]{COT}. Also by \cite[Theorem 1]{AL}, there exist forward weighted shifts on $\ell^2(\mathbb{N})$  that are  strict   $m$-isometries for $m\ge 2$. Now,  using that if $1<p<\infty$ and $T$ is  a forward weighted shift on $\ell^p(\mathbb{N})$, then $T$ is numerically hypercyclic if and only if $T$ is not power bounded (\cite{KPS} \& \cite{S}), we obtain the result.
\end{proof}

%
%
%




Since  both  numerical hypercyclicity and  $m$-isometricity are properties preserved by unitary equivalence, we have that
\begin{corollary}
Let $H$ be  an infinite dimensional separable complex Hilbert space and $m\ge 2$. Then there exists a  numerically hypercyclic $m$-isometry on $H$.
\end{corollary}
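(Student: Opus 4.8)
The plan is to transport the example already produced on $\ell^2(\NN)$ to the abstract space $H$ by a unitary equivalence. The key structural fact I would invoke is that any two separable infinite-dimensional complex Hilbert spaces are unitarily equivalent: since $H$ is separable, infinite-dimensional and complex, it admits an orthonormal basis $(f_n)_{n\in\NN}$, and sending $f_n\mapsto e_n$ defines a unitary operator $U\colon \ell^2(\NN)\to H$. First I would take the numerically hypercyclic strict $m$-isometry $T$ on $\ell^2(\NN)$ supplied by the previous theorem (available for every $m\ge 2$) and set $S:=UTU^{-1}\in B(H)$; the whole proof then reduces to checking that $S$ inherits the two properties of interest.

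For the $m$-isometry identity I would use that $U$ preserves norms, so $\|S^k y\|=\|UT^kU^{-1}y\|=\|T^k(U^{-1}y)\|$ for all $y\in H$ and all $k$. Substituting $x=U^{-1}y$ into the defining identity gives
$$
\sum_{k=0}^m(-1)^{m-k}\binom{m}{k}\|S^k y\|^2=\sum_{k=0}^m(-1)^{m-k}\binom{m}{k}\|T^k x\|^2=0,
$$
so $S$ is an $m$-isometry (in fact a strict one, since by part (1) of Lemma \ref{lema2} the polynomial $n\mapsto\|S^n y\|^2$ has the same degree as $n\mapsto\|T^n x\|^2$). For numerical hypercyclicity, let $x\in\ell^2(\NN)$ be a unit vector with $\{\langle T^n x,x\rangle:n\in\NN\}$ dense in $\CC$ and put $y:=Ux$, again a unit vector. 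Because $U$ preserves inner products, $\langle S^n y,y\rangle=\langle UT^nU^{-1}Ux,Ux\rangle=\langle T^n x,x\rangle$ for every $n$, so the set of numerical values of $S$ at $y$ coincides with that of $T$ at $x$ and is dense in $\CC$.

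Combining these two observations shows that $S$ is a numerically hypercyclic $m$-isometry on $H$, which is the assertion. The argument is entirely routine and presents no genuine obstacle; the only point worth isolating is the unitary equivalence of $H$ with $\ell^2(\NN)$, which is precisely where the hypotheses of separability and infinite-dimensionality are used, together with the stability of both numerical hypercyclicity and $m$-isometricity under unitary equivalence already remarked before the statement.
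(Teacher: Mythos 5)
Your proposal is correct and follows exactly the paper's argument: the paper also obtains the corollary by taking the numerically hypercyclic strict $m$-isometry on $\ell^2(\NN)$ from the preceding theorem and noting that both numerical hypercyclicity and $m$-isometricity are preserved under unitary equivalence, with $H\cong\ell^2(\NN)$ by separability and infinite-dimensionality. Your write-up merely makes explicit the verification the paper leaves implicit (modulo the harmless slip that the unitary you describe, $f_n\mapsto e_n$, goes from $H$ to $\ell^2(\NN)$ rather than the other way around).
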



\begin{theorem}
There exists a   numerically hypercyclic Ces\`{a}ro bounded strict $3$-isometry on $\mathbb{C}^4$.
\end{theorem}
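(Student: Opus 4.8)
The plan is to reuse the operator from the proof of Theorem \ref{3-isometry} in the case $n=4$, $\ell=2$, and to check in addition that it is Ces\`aro bounded. Concretely, I would work on $\CC^4=\CC^2\oplus\CC^2$ and set $T=\mathcal{A}_1\oplus\mathcal{A}_2$, where
$$
\mathcal{A}_j=\begin{pmatrix}\lambda_j & \lambda_j-1\\ 0 & \lambda_j\end{pmatrix}
$$
and $\lambda_1,\lambda_2\in\mathbb{T}$ are rationally independent. Equivalently, $T=D+Q$ with $D=\mathrm{diag}(\lambda_1,\lambda_1,\lambda_2,\lambda_2)$ unitary and $Q$ the nilpotent operator of order $2$ sending $e_2\mapsto(\lambda_1-1)e_1$, $e_4\mapsto(\lambda_2-1)e_3$ and annihilating $e_1,e_3$; since $D$ acts as a scalar on each block, $DQ=QD$.

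First I would record the two structural facts that come for free. Because $D$ is an isometry, $Q$ is nilpotent of order $2$, and $DQ=QD$, part (3) of Lemma \ref{lema2} shows that $T$ is a strict $3$-isometry. For numerical hypercyclicity, rational independence of $\lambda_1,\lambda_2$ forces in particular $\lambda_1\neq\lambda_2$ and $\lambda_j\neq 1$, so each block is a genuine $2\times 2$ Jordan block and one checks directly that $e_2\in\ker(\lambda_1 I-T)^2\setminus\ker(\lambda_1 I-T)$ and $e_4\in\ker(\lambda_2 I-T)^2\setminus\ker(\lambda_2 I-T)$; hence $T$ is numerically hypercyclic by \cite[Proposition 1.12]{S}, exactly as in the proof of Theorem \ref{3-isometry}.

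The remaining, and only genuinely computational, point is Ces\`aro boundedness. Since $Q^2=0$ and $D,Q$ commute, the binomial theorem gives $T^k=D^k+kD^{k-1}Q$, so
$$
M_N(T)=\frac{1}{N+1}\sum_{k=0}^{N}D^k+\frac{1}{N+1}\Bigl(\sum_{k=0}^{N}kD^{k-1}\Bigr)Q.
$$
On the eigenspace attached to $\lambda_j$ the operator $D$ acts as the scalar $\lambda_j$, so it suffices that the scalar sums $\frac{1}{N+1}\sum_{k=0}^{N}\lambda_j^{k}$ and $\frac{1}{N+1}\sum_{k=0}^{N}k\lambda_j^{k-1}$ stay bounded as $N\to\infty$. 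The first is a geometric sum and is $O(1)$ because $|\lambda_j|=1$ and $\lambda_j\neq 1$; the second has the closed form $\frac{1-(N+1)\lambda_j^{N}+N\lambda_j^{N+1}}{(1-\lambda_j)^2}$, whose numerator has modulus $O(N)$ and whose denominator is a fixed nonzero constant, so after dividing by $N+1$ it is again $O(1)$. Therefore $\sup_N\|M_N(T)\|<\infty$. Equivalently, $T$ is a finite orthogonal direct sum of the two Ces\`aro bounded operators $\mathcal{A}_j$ of Example \ref{ejemplo3}, and for a block-diagonal operator $\|M_N(T)\|=\max_j\|M_N(\mathcal{A}_j)\|$, so Ces\`aro boundedness is inherited.

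The main obstacle is precisely this last estimate: the powers $T^k$ grow linearly in $k$ (so $T$ is far from power bounded), and each Ces\`aro mean is an average of $N$ terms whose largest has norm of order $N$; boundedness survives only through the cancellation encoded in the factor $(1-\lambda_j)^{-2}$, which is available exactly because rational independence guarantees $\lambda_j\neq 1$. Everything else is a direct transcription of the argument already used in the proof of Theorem \ref{3-isometry}.
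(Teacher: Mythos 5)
Your proposal is correct and follows essentially the same route as the paper: the same block operator $T=\mathcal{A}_1\oplus\mathcal{A}_2$ with rationally independent unimodular $\lambda_1,\lambda_2$, numerical hypercyclicity via Shkarin's kernel criterion exactly as in Theorem \ref{3-isometry}, and Ces\`aro boundedness reduced to that of the two $2\times 2$ blocks. The only cosmetic difference is that you verify the boundedness of the block Ces\`aro means by an explicit computation with $T^k=D^k+kD^{k-1}Q$, whereas the paper simply cites Lemma \ref{lema1} (as in Example \ref{ejemplo3}); both are valid, and your closed-form estimate is a correct self-contained substitute.
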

\begin{proof}
Let $T$ be the operator considered in the proof of Theorem \ref{3-isometry}
$$
T:=\left(
\begin{array}{cccc}
 \lambda _1 & \lambda _1 -1 & 0& 0  \\
0& \lambda _1&0 &0 \\
0& 0&\lambda _2 & \lambda _2 -1\\
0&0 & 0& \lambda _2
\end{array}
\right)
\;,
$$
where $\lambda_1,\lambda_2\in \mathbb{T}$ are  rationally independent.
By the proof of Theorem \ref{3-isometry}, it is clear that $T$ is numerically hypercyclic.

Since both blocks
$$
\left(
\begin{array}{cc}
 \lambda _1 & \lambda_1 -1   \\
0& \lambda _1 \\
\end{array}
\right)
\qquad\hbox{and}\qquad
\left(
\begin{array}{cc}
 \lambda _2 & \lambda_2 -1   \\
0& \lambda _2 \\
\end{array}
\right)
$$
are Cesàro bounded by Lemma \ref{lema1},
it is easy to see that $T$ is Cesàro bounded.

\end{proof}

We know that there exist examples of numerically hypercyclic $3$-isometries and  weakly ergodic  $3$-isometries.
 The following result goes further in this direction.

\begin{theorem}
Any weakly ergodic strict $3$-isometry on a Hilbert space  is  weakly numerically hypercyclic.
\end{theorem}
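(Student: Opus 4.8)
The plan is to verify Shkarin's characterization of weak numerical hypercyclicity: by \cite[Proposition 1.5]{S} it suffices to produce $x,y\in H$ for which the numerical orbit $\{\langle T^nx,y\rangle:n\in\NN\}$ is dense in $\CC$. I emphasize at the outset that the finite-dimensional route through two rationally independent unimodular eigenvalues with nontrivial Jordan blocks (the criterion used in the proof of Theorem~\ref{3-isometry}) is \emph{not} available here: the typical weakly ergodic strict $3$-isometry, e.g.\ the $2\times2$ operator matrix with diagonal $U$ and upper-right entry $U-I$ built on the bilateral shift in Theorem~\ref{Rachjman}, has no eigenvalues at all. Hence the numerical orbit has to be constructed by hand.

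Two features will drive the construction. First, since $T$ is a strict $3$-isometry its covariance operator $\Delta_T$ is a nonzero positive operator (Lemma~\ref{lema2}(1)), so I fix $u\in H$ with $a:=\langle \Delta_T u,u\rangle>0$; then $\|T^nu\|^2$ is a quadratic in $n$ with leading coefficient $a$, so $\|T^nu\|\sim\sqrt{a}\,n\to\infty$. Second, weak ergodicity forces a weak-null ``spreading'': by (\ref{media}), $M_n(T)\to P$ in the weak operator topology gives $\frac{T^n}{n+1}\to 0$ in WOT, so $\langle T^nz',z\rangle=o(n)$ for all $z',z\in H$; in particular the unit vectors $T^nu/\|T^nu\|$ are weakly null. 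Thus $T^nu$ grows linearly in norm while its direction rotates off to infinity, exactly as $U^nv$ does for the bilateral shift.

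I then take $x=u$ and seek $y=\sum_{j\ge1}c_j\,T^{n_j}u$, a norm-convergent series along a sparse, rapidly increasing sequence $n_1<n_2<\cdots$ to be chosen, with $\overline{c_j}=w_j/\|T^{n_j}u\|^2$ for a fixed enumeration $(w_j)$ of a countable dense subset of $\CC$. The diagonal contribution to $\langle T^{n_j}u,y\rangle$ is then exactly $w_j$, so density follows once the off-diagonal error $E_j:=\sum_{i\ne j}\overline{c_i}\,\langle T^{n_j}u,T^{n_i}u\rangle$ is small for every $j$ (and one checks $\sum_j|c_j|^2\|T^{n_j}u\|^2\approx\sum_j|w_j|^2/\|T^{n_j}u\|^2<\infty$, so $y\in H$). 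Using $(T^*)^{n_i}T^{n_i}=I+n_iB_1+\binom{n_i}{2}B_2$ with $B_2=2\Delta_T$, one rewrites $\langle T^{n_j}u,T^{n_i}u\rangle=\langle T^{|n_j-n_i|}u,z_i\rangle$ for an explicit fixed vector $z_i$, which by weak ergodicity is $o(n_j)$. The terms with $i>j$ are harmless: choosing each new index enormously larger than the squares of the previous ones makes $\sum_{i>j}|c_i|\,|\langle T^{n_j}u,T^{n_i}u\rangle|$ summably small.

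The crux — and the step I expect to be the main obstacle — is the past part $\sum_{i<j}\overline{c_i}\,\langle T^{n_j}u,T^{n_i}u\rangle$. Here $|c_i|\sim|w_i|/(a\,n_i^2)$ while Cauchy--Schwarz only gives $|\langle T^{n_j}u,T^{n_i}u\rangle|\lesssim a\,n_in_j$, yielding a term of size $\sim n_j/n_i$ that blows up as $n_j$ grows; genuine cancellation is required. The intended resolution is to choose the times $n_j$ \emph{adaptively}, selecting each $n_j$ not merely large but inside the ``good set'' of indices on which the finitely many fixed correlations $m\mapsto\langle T^{m}u,z_i\rangle$ ($i<j$) are small relative to $n_j$; this is possible because weak ergodicity forces these correlations to decay (a recurrence/Rajchman phenomenon for the underlying spectral data), and a finite intersection of co-sparse such sets is again infinite. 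In the model this is transparent: when the isometric part of $T$ admits a wandering vector one may choose $u$ so that $\langle T^{m}u,z_i\rangle$ vanishes identically for $|m|$ beyond a fixed bound, giving \emph{exact} orthogonality along any sufficiently spaced $(n_j)$ — precisely what occurs for the operator $\mathcal{M}$ of Theorem~\ref{Rachjman}, where a finitely supported choice annihilates every off-diagonal inner product. Establishing that the required decay always holds for a weakly ergodic strict $3$-isometry (so that the adaptive choice of $(n_j)$ can be completed) is the heart of the matter; once it is in place $E_j\to0$, the orbit $\{\langle T^nx,y\rangle\}$ meets every $w_j$ and is therefore dense, and $T$ is weakly numerically hypercyclic by \cite[Proposition 1.5]{S}.
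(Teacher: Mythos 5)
Your reduction via \cite[Proposition 1.5]{S} and your two structural observations (the linear growth $\|T^nu\|\sim\sqrt{a}\,n$ from part (1) of Lemma \ref{lema2}, and $T^n/(n+1)\to 0$ in the weak operator topology from weak ergodicity and (\ref{media})) are exactly the facts the paper uses. But the gap you flag at the end is genuine and, as formulated, not repairable: your adaptive choice of $(n_j)$ needs, for each fixed $i$, arbitrarily large $m$ with $|\langle T^mu,z_i\rangle|$ \emph{bounded} (roughly by $n_i^2$ times the target accuracy), whereas weak ergodicity only gives convergence of the Ces\`aro averages of $m\mapsto\langle T^mu,z_i\rangle$, hence $\langle T^mu,z_i\rangle=o(m)$. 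That is compatible with $|\langle T^mu,z_i\rangle|\to\infty$, with no bounded subsequence at all (think of a scalar sequence such as $(-1)^m m/\log m$: Ces\`aro convergent, yet eventually larger than any fixed bound), so the ``good sets'' you intersect over can be empty; no recurrence or Rajchman-type decay follows from the hypotheses, and you give no argument that the $3$-isometry structure forces it. The wandering-vector computation is a feature of the particular example $\mathcal{M}$ of Theorem \ref{Rachjman}, not of weakly ergodic strict $3$-isometries in general, so the heart of the matter is left open.

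The paper closes this gap by not constructing $y$ as a series at all: it applies \cite[Lemma 6.1]{S} to the sequence $x_n:=T^nx/n$, which by your own two observations is weakly convergent (to $0$) but not norm convergent; that lemma directly yields $y$ with $\{n\langle x_n,y\rangle:n\in\NN\}=\{\langle T^nx,y\rangle:n\in\NN\}$ dense in $\CC$, and \cite[Proposition 1.5]{S} finishes. The mechanism behind such a lemma is Baire category rather than cancellation: for a fixed disc around $w$ the set of $y$ whose orbit meets it is open, and it is dense because, given $y_0$, one can pick a single large $n$ with $\|x_n\|\ge c$ and take $y=y_0+h$ with $h$ the multiple of $x_n$ solving $n\langle x_n,y\rangle=w$ exactly, whose norm is $|w-n\langle x_n,y_0\rangle|/(n\|x_n\|)\le(|w|+o(n))/(cn)$, hence small for $n$ large; intersecting over a countable family of discs gives a residual set of good $y$. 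Since each requirement is met by perturbing in the direction of one single $x_n$, no joint control of the cross terms $\langle T^{n_j}u,T^{n_i}u\rangle$ is ever needed---which is precisely what your series construction cannot avoid.
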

\begin{proof}
If $T$ is a weakly ergodic  strict $3$-isometry, then there exists $x$ such that $\displaystyle \frac{T^nx}{n}$ is weakly convergent  but it is not norm convergent. Indeed  for  a strict $3$-isometry   $T$, there exists $x$ such that  $\displaystyle \frac{T^nx}{n}$ does not converge to zero in norm.

Then, since $\displaystyle x_n =\frac{T^nx}{n}$ is weakly convergent  but it is not norm convergent, by \cite[Lemma 6.1]{S} there is
$y\in H$ such that $\{n \langle x_n,y\rangle: n\in \mathbb{N}\}$ is dense on $\mathbb{C}$.
Hence
$T$ is weakly numerically hypercyclic.

\end{proof}

 In particular, the  example of a weakly ergodic $3$-isometry   defined  in \cite[Section 5.2]{AS} is weak numerically hypercyclic.

\begin{question}
Do there exist  numerically hypercyclic weakly ergodic $3$-isometries?
\end{question}

Let $T$ be an $m$-isometry. What can we say about dynamical properties of $T^*$?
 Some particular classes of operators allow the study of the (chaotic) dynamics of the adjoints.

\begin{theorem}
Let  $S_w$ be  a  forward weighted shift strict $m$-isometry on $\ell^{2}(\mathbb{N})$. Then
\begin{enumerate}
\item $S_w^*$ is mixing if and only if $ m\ge 2$.

\item $S_w^*$ is chaotic if and only if $ m\ge 3$.

\end{enumerate}
\end{theorem}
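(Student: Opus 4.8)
The plan is to reduce both statements to the known dynamical criteria for weighted backward shifts, after first recording the precise shape of the weights forced by the strict $m$-isometry hypothesis. By \cite[Theorem 1]{AL} (see also \cite[Remark 3.9]{BMNe}), a forward weighted shift $S_w e_n = w_n e_{n+1}$ on $\ell^2(\NN)$ is an $m$-isometry if and only if $|w_n|^2 = \frac{p(n+1)}{p(n)}$ for some polynomial $p$ with $p(n)>0$ on $\NN$, and it is \emph{strict} exactly when $\deg p = m-1$. The product of the weights then telescopes,
$$
\prod_{j=1}^n |w_j|^2 = \prod_{j=1}^n \frac{p(j+1)}{p(j)} = \frac{p(n+1)}{p(1)},
$$
and since $\deg p = m-1$ we have $p(n+1)\sim c\,n^{m-1}$ with leading coefficient $c>0$. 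This single asymptotic quantity will govern everything.

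Next I would pass to the adjoint. A direct computation gives $S_w^* e_m = \overline{w_{m-1}}\,e_{m-1}$ for $m\ge 2$ and $S_w^* e_1 = 0$, so $S_w^*$ is a weighted backward shift whose relevant product is
$$
P_n := \prod_{m=2}^{n+1} |w_{m-1}| = \prod_{j=1}^n |w_j| = \Bigl(\tfrac{p(n+1)}{p(1)}\Bigr)^{1/2}.
$$
Thus $P_n^2 \sim c\,p(1)^{-1} n^{m-1}$, and both conclusions reduce to reading off the growth and summability of $P_n$.

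For part (1), I would invoke the mixing criterion for backward weighted shifts \cite[Theorem 4.8]{GEP11}: $S_w^*$ is mixing if and only if $P_n\to\infty$. Since $P_n = (p(n+1)/p(1))^{1/2}$, this holds if and only if $\deg p\ge 1$, that is $m\ge 2$; for $m=1$ the operator is an isometry and $S_w^*$ is the unweighted backward contraction, which is not even hypercyclic. For part (2), I would use the chaos criterion for backward weighted shifts on $\ell^2$ \cite{GEP11}: $S_w^*$ is chaotic if and only if $\sum_n P_n^{-2}<\infty$. As $P_n^{-2}= p(1)\,p(n+1)^{-1}\sim c^{-1}p(1)\,n^{-(m-1)}$, comparison with $\sum_n n^{-(m-1)}$ shows the series converges if and only if $m-1>1$, i.e.\ $m\ge 3$. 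Note that convergence of this series already forces $P_n\to\infty$, hence mixing and a fortiori hypercyclicity, so no separate transitivity check is required to conclude chaoticity.

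The individual computations are routine; the one point to get exactly right is the bookkeeping that turns the telescoping identity $\prod_{j\le n}|w_j|^2 = p(n+1)/p(1)$ into the correct exponent in the chaos series ($P_n^{-2}$ on $\ell^2$, not $P_n^{-1}$), so that the convergence dichotomy falls precisely between $m=2$ (where $P_n\to\infty$ gives mixing but $\sum n^{-1}$ diverges, so no dense periodic points) and $m=3$ (where $\sum n^{-2}$ converges, giving chaos). I would also double-check that the indexing convention in the cited criteria matches $P_n=\prod_{j=1}^n|w_j|$ for the adjoint shift, which is the only place an index shift or harmless constant could slip in; such shifts do not affect the asymptotic order of $P_n$ and hence leave all conclusions unchanged.
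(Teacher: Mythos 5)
Your proof is correct and follows essentially the same route as the paper: the characterization of $m$-isometric forward weighted shifts from \cite[Theorem 1]{AL} combined with the mixing and chaos criteria for weighted backward shifts in \cite[Theorem 4.8]{GEP11}. In fact you carry out the telescoping product computation and the ``only if'' directions explicitly (using that the criteria in \cite{GEP11} are equivalences), details which the paper's two-line proof leaves implicit.
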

\begin{proof}
By \cite[Theorem 1]{AL}, a unilateral  weighted forward shift on a Hilbert space is an $m$-isometry if and only if there exists a polynomial $p$ of degree at most $m-1$ such that for any integer $n\ge 1$, we have that  $p(n)>0$ and $|w_n|^2 =\displaystyle \frac{p(n+1)}{p(n)}$.
Thus for $m\geq 2$, $S_w^*$ satisfies condition ii) of (c) from   \cite[Theorem 4.8]{GEP11} and   $S_w^*$ is mixing. For $m\ge 3$, $S_w^*$ satisfies  condition ii) of c) from   \cite[Theorem 4.8]{GEP11} and    $S_w^*$ is chaotic.

\end{proof}

Notice that, if $S_w$ is a  unilateral forward weighted shift and a strict $m$-isometry on $\ell^{2}(\mathbb{N})$ with $m\ge 2$, then $S_w^*$ is hypercyclic operator.

Since on $\ell^{2}(\mathbb{Z})$ there exist bilateral forward weighted shifts which are strict $m$-isometries   only for  odd $m$, then we have
\begin{theorem}
Let  $S_w$ be   a  bilateral forward weighted shift strict $m$-isometry on $\ell^{2}(\mathbb{Z})$ with $m>1$. Then  $S_w^*$ is chaotic.
\end{theorem}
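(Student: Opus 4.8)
The plan is to combine the structure theorem for weighted-shift $m$-isometries with the standard dynamical criteria for bilateral weighted shifts. The paragraph preceding the statement records that bilateral forward weighted shift strict $m$-isometries exist on $\ell^2(\mathbb{Z})$ only for odd $m$; hence the hypothesis $m>1$ forces $m$ to be odd with $m\ge 3$. After normalizing by a diagonal unitary we may assume the weights $w_n$ are positive, and by the bilateral version of \cite[Theorem 1]{AL} there is a polynomial $p$ with $|w_n|^2=p(n+1)/p(n)$ and $p(n)>0$ for all $n\in\mathbb{Z}$, where $\deg p=m-1$. The decisive point is that $m-1$ is now \emph{even} and $\ge 2$, so $p$ has positive leading coefficient and $p(n)\to+\infty$ as $n\to\pm\infty$.

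First I would exploit telescoping. Writing $B:=S_w^*$ for the backward weighted shift, one gets $\|B^ke_n\|^2=\prod_{i=1}^k|w_{n-i}|^2=p(n)/p(n-k)$ and, introducing the formal right inverse $Se_n:=w_n^{-1}e_{n+1}$, $\|S^ke_n\|^2=\prod_{i=0}^{k-1}|w_{n+i}|^{-2}=p(n)/p(n+k)$. Because $p(n\pm k)\to+\infty$, both quantities tend to $0$ as $k\to\infty$ for every fixed $n$. Since $BS=I$ and the canonical basis is total, the Mixing Criterion (the bilateral form of the criterion underlying \cite[Theorem 4.8]{GEP11}) shows that $B$ is topologically mixing, hence topologically transitive.

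Next I would produce a dense set of periodic points. A vector $x=(x_n)$ with $B^Nx=x$ satisfies the recursion $x_{n+N}=x_n\bigl(\prod_{j=0}^{N-1}w_{n+j}\bigr)^{-1}=x_n\sqrt{p(n)/p(n+N)}$, so $|x_{r+kN}|^2=|x_r|^2\,p(r)/p(r+kN)$ for $k\in\mathbb{Z}$. Given arbitrary seed values on one period block, the resulting sequence lies in $\ell^2(\mathbb{Z})$ precisely because $\sum_{k\in\mathbb{Z}}1/p(r+kN)<\infty$; this is where $\deg p\ge 2$ is essential, the series behaving like $\sum_k k^{-(m-1)}$. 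To get density, I would approximate an arbitrary finitely supported vector $y$ supported in $\{-L,\dots,L\}$ by choosing $N>2L+1$, taking the seed block equal to $y$ on that window (and $0$ elsewhere in the block) and extending periodically; the error equals $\sum_{r}|y_r|^2\,p(r)\sum_{k\ne0}1/p(r+kN)$, which tends to $0$ as $N\to\infty$. Hence the periodic points are dense.

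Combining transitivity with dense periodic points yields Devaney chaos, so $S_w^*$ is chaotic. The main obstacle is the periodic-point step: one must verify both that the periodically extended seeds actually define $\ell^2$ vectors (summability of $\sum_k 1/p(r+kN)$, valid only because the evenness of $m-1$ forces $\deg p\ge 2$ and two-sided growth of $p$) and that the tail error vanishes as the period grows. This is exactly the feature that fails for even $m$, and it is what ties the chaos of $S_w^*$ to the odd-$m$ restriction.
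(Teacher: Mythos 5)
Your proof is correct, but it takes a more self-contained route than the paper. Both arguments start identically, with the Abdullah--Le structure theorem for bilateral weighted shifts (\cite[Theorem 19 \& Corollary 20]{AL}): $|w_n|^2=p(n+1)/p(n)$ with $p>0$ on all of $\mathbb{Z}$, which forces $\deg p$ even, $m$ odd, and (by strictness and $m\ge 3$) $\deg p=m-1\ge 2$ with $p(n)\to+\infty$ as $n\to\pm\infty$. At that point the paper simply cites the known characterization of chaotic bilateral weighted shifts, condition (ii) of part (c) of \cite[Theorem 4.13]{GEP11}; after telescoping, the summability hypothesis of that theorem reduces to $\sum_n 1/p(n)<\infty$, which is exactly the convergence your argument exploits. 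What you do instead is re-prove the sufficiency half of that characterization in this special case: mixing via the Mixing Criterion, using the telescoped products $\|B^k e_n\|^2=p(n)/p(n-k)\to 0$ and $\|S^k e_n\|^2=p(n)/p(n+k)\to 0$ for the right inverse $S$, and dense periodic points via the explicit periodic extension of finitely supported seeds, with the tail error $\sum_r |y_r|^2\,p(r)\sum_{k\ne 0}1/p(r+kN)=O(N^{-(m-1)})\to 0$. Your construction of periodic points is sound (the recursion $|x_{r+kN}|^2=|x_r|^2\,p(r)/p(r+kN)$ is summable precisely because $\deg p\ge 2$), and combining transitivity with dense periodic points does give chaos. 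What your approach buys is transparency: it makes visible that the odd-$m$ restriction, i.e.\ the even degree and two-sided growth of $p$, is exactly what makes both the hypercyclicity and the periodic-point step work; what the paper's approach buys is brevity, since the entire dynamical content is delegated to a single citation.
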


\begin{proof}
By \cite[Theorem 19 \& Corollary 20]{AL}, a bilateral weighted forward shift on a Hilbert space is a strict  $m$-isometry if and only if there exists a polynomial $p$ of degree at most $m-1$ such that for any integer $n$, we have $p(n)>0$ and $|w_n|^2 =\displaystyle \frac{p(n+1)}{p(n)}$ and $m$ is an odd integer.
Hence, for   $m\ge 3$,   $S_w^*$ satisfies condition  ii) of c) from  \cite[Theorem 4.13]{GEP11}. Thus  $S_w^*$ is chaotic.

\end{proof}

\small

\smallskip
\noindent T. Berm\'{u}dez\\
{\it Departamento de An\'alisis Matem\'atico, Universidad de la Laguna,
38271, La Laguna (Tenerife), Spain.}\\
{\it e-mail:} tbermude@ull.es

\smallskip
\noindent A. Bonilla\\
{\it Departamento de An\'alisis Matem\'atico, Universidad de la Laguna,
38271, La Laguna (Tenerife), Spain.}\\
{\it e-mail:} abonilla@ull.es

\smallskip
\noindent V. M\"uller\\
Mathematical Institute, Czech Academy of Sciences, Zitn\'a 25,
115 67 Prague 1, Czech Republic.\\
{\it e-mail}: muller@math.cas.cz

\smallskip
\noindent A. Peris\\
{\it IUMPA, Universitat Polit\`ecnica de Val\`encia, Departament de
Matem\`atica Aplicada, Edifici 7A, 46022 Val\`encia, Spain.}\\
{\it e-mail:} aperis@mat.upv.es

\end{document}